\newtheorem{theorem}{Theorem}[section]
\newtheorem{proposition}[theorem]{Proposition}
\newtheorem{lemma}[theorem]{Lemma}
\newtheorem{remark}[theorem]{Remark}
\newtheorem{definition}[theorem]{Definition}
\newtheorem{example}[theorem]{Example}
\newtheorem{corollary}[theorem]{Corollary}
\newcommand{\thmref}[1]{Theorem~\ref{#1}}
\newcommand{\secref}[1]{Section~\ref{#1}}
\newcommand{\lemref}[1]{Lemma~\ref{#1}}
\newcommand{\propref}[1]{Proposition~\ref{#1}}
\newcommand{\corref}[1]{Corollary~\ref{#1}}
\newcommand{\remref}[1]{Remark~\ref{#1}}
\newcommand{\subsecref}[1]{Subsection~\ref{#1}}
\renewcommand{\hom}{\mathrm{Hom}}
\newcommand{\im}{\mathrm{Im}}
\newcommand{\fix}{\mathrm{Fix}}
\begin{document}
	\baselineskip=15.5pt
	\title[Rigidity of cohomology automorphisms and coincidence theory]{Rigidity of cohomology automorphisms of homogeneous spaces and coincidence theory}

	\author[M. Mandal]{Manas Mandal}
	\address{Indian Institute of Technology Kanpur, Kanpur 208016, India}
	\email{manas.imsc@gmail.com}
	
	\author[D. Setia]{Divya Setia}
	\address{Institute of Mathematics, Polish Academy of Sciences, Kraków 31-027, Poland}
	\email{divyasetia01@gmail.com}
	
	\subjclass[2020]{Primary: 55S37, 08A35,  55M20; Secondary: 14M15}
	\keywords{cohomology endomorphisms, homogeneous spaces, complex Grassmann manifolds, generalized Dold spaces,  fixed point theory, coincidence theory}

	\thispagestyle{empty}
	\date{}

	\begin{abstract}
    We obtain a rigidity phenomena of rational cohomology automorphisms of certain homogeneous spaces, in the presence of external cohomology classes arising from spaces with trivial cup product in rational cohomology algebra. We classify graded endomorphisms of the rational cohomology algebra of the product of a sphere and a complex Grassmannian, whose images are nonzero  in the second cohomology of the Grassmannian.
		We also derive necessary conditions for the generalized Dold spaces to satisfy the coincidence property, in particular the fixed-point property. As an application of our results, we obtain several sufficient conditions for the existence of a point of coincidence between a pair of continuous functions on certain generalized Dold spaces. 
	\end{abstract}

	\maketitle
	\begin{center}
	\end{center}
	
	\section{Introduction} \label{intro}
	The classification of endomorphisms of the rational cohomology algebra of formal spaces was greatly motivated by Sullivan's theory where it was  proved that rational homotopy class of self-maps are completely determined by the induced graded endomorphisms of their rational cohomology algebras. 
	
	In \cite{brewster}, the authors developed the foundational work by classifying automorphisms of the rational cohomology algebra of complex Grassmannian. Their results were generalized in \cite{hoffman}, where the author classified graded endomorphisms of the rational cohomology algebra of complex Grassmannian which are nonzero on dimension $2$. Further, he conjectured that every graded endomorphism  vanishing on dimension two is necessarily trivial. This conjecture was proved in \cite{glover-homer} for several cases.
	
	The cohomology endomorphisms are also studied for a variety of homogeneous spaces $G/H$, where $G$ is a compact connected Lie group and $H$ is a closed
	subgroup of maximal rank. This is a topic of interest since past fifty years and are studied in several papers \cite{shiga-tezuka2, brewster-homer, hoffman-homer, Papadima, duan, duan-fang, duan-zhao, lin, goswami-sarkar}. 
	
		We obtain a rigidity phenomena of the rational  cohomology automorphisms of the homogeneous spaces $G/H.$
		Consider a finite CW complex $X$ with trivial cup product in $H^*(X;\mathbb Q)$. Then, for any graded endomorphism $\phi$ of $H^*(X\times G/H;\mathbb Q)$ such that $i^*\circ \phi \circ p^*$ is an automorphism on $H^*(G/H;\mathbb Q)$, where  $i^*: H^*(X\times G/H;\mathbb Q) \twoheadrightarrow H^*(G/H;\mathbb Q)$ is the projection induced by the inclusion $i: G/H\hookrightarrow X\times G/H$ and $p^*: H^*(G/H;\mathbb Q) \hookrightarrow H^*(X\times G/H;\mathbb Q)$ is the inclusion induced by the projection $p: X \times G/H \twoheadrightarrow G/H$, we prove that $\phi \big(H^*(G/H;\mathbb Q)\big)=H^*(G/H;\mathbb Q)$ in the following result.
		\begin{theorem}\label{rigidity theorem in intro}
			Let $X$ be a path-connected finite CW complex with trivial cup product in its rational cohomology algebra.
			Let $G$ be a compact connected Lie group and $H$ be a closed subgroup of maximal rank.
			Consider a graded endomorphism $$\phi: H^*(X\times G/H;\mathbb Q)\to H^*(X\times G/H;\mathbb Q)$$ 
			such that $i^*\circ\phi\circ p^*$ is  an automorphism $$\phi_0: H^*(G/H;\mathbb Q)\to H^*(G/H;\mathbb Q).$$
			
			Then, $\phi(z)=\phi_0(z)$ for all $z\in H^*(G/H;\mathbb Q).$
	\end{theorem}
	
		Next, we focus our attention to a particular case where $X$ is a sphere $\mathbb S^m$ and $G/H$ is a complex Grassmannian $\mathbb CG_{n,k}$ (consisting of $k$-dimensional subspaces in $\mathbb C^n$)
		and we classify graded endomorphisms of the rational cohomology algebra $H^*(\mathbb{S}^m \times \mathbb{C}G_{n,k};\mathbb{Q})$, whose images are nonzero in $H^2(\mathbb CG_{n,k},\mathbb Q)$. Our main motivation for this special choice $\mathbb S^m\times \mathbb CG_{n,k}$ is to study the coincidence theory (in particular, fixed-point theory) of certain generalized Dold spaces.
	
	The rational cohomology algebra of the product
	\[
	H^*(\mathbb S^m \times \mathbb CG_{n,k};\mathbb Q)\cong H^*(\mathbb S^m,\mathbb Q)\otimes H^*(\mathbb CG_{n,k};\mathbb Q)
	\]
	is generated by $u,c_1,c_2,\dots,c_k$, where $H^*(\mathbb{C}G_{n,k};\mathbb{Q})$ (resp. $H^*(\mathbb S^m;\mathbb Q)$) is generated by certain Chern classes $c_1, \dots, c_k$ (resp. $u$). Using \thmref{rigidity theorem in intro}, we obtain the following result.
	\begin{theorem}
		Let $\phi$ be a graded endomorphism of 
		$H^*(\mathbb S^{m}\times\mathbb C G_{n,k};\mathbb Q)$ 
		satisfying $\phi(c_1)\neq a u, \, a\in\mathbb Q$.  
		Then there exists a nonzero rational $\lambda$ such that the following holds.
		\begin{enumerate}\label{result main}
			\item If $k < n - k,$ 
			$$ \phi(c_i) = \lambda^i c_i, \forall i \in \{1,2,\dots,k\}$$
			If $k = n - k$, there is an additional possibility of $\phi$ that it is induced by the homeomorphism 
			\[
			\mathbb{C}G_{2k,k} \longrightarrow \mathbb{C}G_{2k,k}, 
			\quad L \longmapsto L^{\perp},
			\]
			where $L^{\perp}$ denotes the orthogonal complement of the $k$-plane $L$ in $\mathbb{C}^{2k}$.
			
			\item 
			The image of $H^*(\mathbb{S}^m;\mathbb{Q})$ under $\phi$ lies in 
			$H^*(\mathbb{S}^m;\mathbb{Q})$ or in $H^*(\mathbb{C}G_{n,k};\mathbb{Q})$ i.e. $$\phi(u) = \mu u,\, \mu \in \mathbb{Q}, \text{ or } \phi(u) \in H^*(\mathbb{C}G_{n,k};\mathbb{Q}) \text{ with } (\phi(u))^2 =0.$$
			
		\end{enumerate}
	\end{theorem}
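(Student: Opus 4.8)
The plan is to exploit the Künneth decomposition $H^*(\mathbb S^m\times\mathbb CG_{n,k};\mathbb Q)\cong B\oplus uB$, where $B:=H^*(\mathbb CG_{n,k};\mathbb Q)$, and to reduce the statement to two inputs: the rigidity of $B$ recalled in the introduction, and the vanishing of negative-degree derivations of $B$. First I would observe that the projection $\pi\colon B\oplus uB\to B$ that kills $u$ is a graded ring homomorphism — it is induced by the inclusion $\{*\}\times\mathbb CG_{n,k}\hookrightarrow\mathbb S^m\times\mathbb CG_{n,k}$ — so $\psi:=\pi\circ\phi|_B$ is a graded algebra endomorphism of $B$. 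The hypothesis $\phi(c_1)\neq au$ is precisely the statement that the $H^2(\mathbb CG_{n,k};\mathbb Q)$-component of $\phi(c_1)$ is nonzero, i.e. $\psi(c_1)\neq0$. Thus $\psi$ is a graded endomorphism of $B$ nonzero in degree two, and the classification from \cite{hoffman} applies: $\psi$ is a graded automorphism, equal either to $c_i\mapsto\lambda^i c_i$ for some nonzero $\lambda\in\mathbb Q$, or — only when $k=n-k$ — to the automorphism induced by $L\mapsto L^\perp$, possibly post-composed with such a scaling. This produces the two cases appearing in part (1), modulo the $u$-terms.

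Next I would write $\phi(b)=\psi(b)+u\,\gamma(b)$ for $b\in B$, with $\gamma(b)\in B$ of degree $|b|-m$. Comparing the two sides of $\phi(bb')=\phi(b)\phi(b')$ and using $u^2=0$ shows $\gamma(bb')=\gamma(b)\psi(b')+\psi(b)\gamma(b')$, so $\delta:=\psi^{-1}\circ\gamma$ is an honest derivation of $B$ of degree $-m$. Concretely, evaluating the defining relations $R_{n-k+1},\dots,R_n$ of $B$ (the vanishing above degree $2(n-k)$ of the Chern classes of the tautological quotient bundle) on the elements $\phi(c_1),\dots,\phi(c_k)$ and extracting the coefficient of $u$ gives $\sum_i(\partial_{c_i}R_j)(\psi(c))\,\gamma(c_i)=0$ for every $j$, which after applying $\psi^{-1}$ is exactly the compatibility making $(\delta(c_1),\dots,\delta(c_k))$ a degree $-m$ derivation of $B$.

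The heart of the argument is then the vanishing $\mathrm{Der}^{-m}(B)=0$ for all $m\ge1$: $B=H^*(\mathbb CG_{n,k};\mathbb Q)$ carries no nonzero derivation of negative degree. For odd $m$ this is automatic since $B$ is evenly graded, and for $m>2k(n-k)$ it holds for degree reasons; the substantive range is $2\le m\le 2k(n-k)$ even, where this is the Halperin-type property known for complex Grassmannians (for instance the degree-zero value $\gamma(c_1)$ can also be killed by hand: since $c_1^{k(n-k)}u\neq0$ in the product while $c_1^{k(n-k)+1}=0$, applying $\phi$ to $c_1^{k(n-k)+1}$ forces $\gamma(c_1)=0$). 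Granting $\mathrm{Der}^{-m}(B)=0$, we obtain $\delta=0$, hence $\gamma=0$ and $\phi(c_i)=\psi(c_i)$, which is part (1).

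Finally, for $\phi(u)\in H^m(\mathbb S^m\times\mathbb CG_{n,k};\mathbb Q)=H^m(\mathbb CG_{n,k};\mathbb Q)\oplus\mathbb Q\,u$, write $\phi(u)=r+\mu u$ with $r\in H^m(\mathbb CG_{n,k};\mathbb Q)$, $\mu\in\mathbb Q$. Expanding $0=\phi(u)^2=r^2+2\mu\,ur$ in $B\oplus uB$ yields $r^2=0$ and $\mu r=0$; hence either $\mu\neq0$, forcing $r=0$ and $\phi(u)=\mu u$, or $\mu=0$ and $\phi(u)=r\in H^*(\mathbb CG_{n,k};\mathbb Q)$ with $\phi(u)^2=0$. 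This is part (2). The one genuine obstacle is the derivation-vanishing input of the third paragraph; everything else is bookkeeping with the Künneth splitting.
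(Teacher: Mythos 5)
Your proposal is correct and follows essentially the same route as the paper: split $H^*(\mathbb S^m\times\mathbb CG_{n,k};\mathbb Q)\cong H^*_{\mathbb CG}\oplus uH^*_{\mathbb CG}$, apply Hoffman's classification to the projected endomorphism $\psi$, observe that the $u$-components form a $\psi$-twisted derivation, and kill it by the vanishing of negative-degree derivations of $H^*(\mathbb CG_{n,k};\mathbb Q)$ --- the paper invokes Shiga--Tezuka's Theorem $A'$ for $G/H$ with $H$ of maximal rank, which is exactly the ``Halperin-type'' input you isolate as the one external fact, and your treatment of $\phi(u)$ is identical. The only cosmetic difference is that you form the derivation as $\psi^{-1}\circ\gamma$ where the paper uses $\gamma\circ\psi^{-1}$; both are genuine degree $-m$ derivations, so nothing is affected.
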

	Unlike the case of the complex Grassmannian, we cannot expect a graded
	endomorphism of $H^*(\mathbb S^m \times \mathbb CG_{n,k};\mathbb Q)$ to be
	trivial merely because it vanishes in $H^2(\mathbb{C}G_{n,k}; \mathbb{Q})$. 
	In fact, we prove the following result.
	\begin{theorem}
		For each $  i\in\{1,2,\dots, k\}$, choose $P_i \in H^{2i-m}({\mathbb C G_{n,k}};\mathbb{Q})$  and $Q \in H^*(\mathbb S^m;\mathbb Q)\cup H^*({\mathbb CG_{n,k}};\mathbb{Q})$ with $Q^2=0$  in $ H^*(\mathbb{S}^m\times \mathbb{C}G_{n,k};\mathbb{Q})$. Then there exist a graded endomorphism $\phi$ on $H^*(\mathbb{S}^m\times \mathbb{C}G_{n,k};\mathbb{Q})$ such that 
		\[
		\phi(c_i)=uP_i, \; \forall i\in I, \text{ and  } \quad \phi(u)=Q.
		\]
	\end{theorem}
	
	We observe that if a continuous function on $\mathbb S^m \times \mathbb CG_{n,k}$ stabilizes a copy of Grassmannian then the induced cohomology endomorphism stabilizes the subalgebra $H^*(\mathbb S^m ;\mathbb Q)$, in the following result.
	\begin{theorem}
		Let $f$ be a continuous map on $\mathbb S^m\times \mathbb CG_{n,k}$ such that $f(\{{x_0}\}\times \mathbb CG_{n,k})\subseteq \{{x_0}\}\times \mathbb CG_{n,k} $ for some $x_0\in \mathbb S^m.$ Then $f^*(u) = \mu u$ for some $\mu \in \mathbb Z$. 
	\end{theorem}
	
	Our study is also motivated by the theory of generalized Dold spaces as the product space $\mathbb S^m \times \mathbb CG_{n,k}$ is a double cover of certain  generalized Dold spaces (GDS). 
	The classical Dold manifolds were introduced in \cite{dold} to construct odd-dimensional generators for Thom's unoriented cobordism ring. In this paper, we are interested in the GDS introduced in \cite{nath-sankaran} and defined as  
	\[
	P(m,n,k):=\mathbb S^m\times \mathbb CG_{n,k}/\!\!\sim, \text { where } (s,L)\sim (-s,\bar L).
	\]
	As an application of \thmref{result main}, we describe endomorphisms of $H^*(P(m,n,k);\mathbb{Q})$ induced by continuous functions on $P(m,n,k)$. Using this description, we prove that every automorphism of $H^*(P(m,n,k);\mathbb{Q})$ induced by a continuous function on $P(m,n,k)$ lifts to an automorphism of $H^*(\mathbb S^m\times \mathbb CG_{n,k}; \mathbb{Q})$ if $n>2$. 
	
	Our broader aim is to apply \thmref{result main} to obtain results in coincidence theory. Coincidence theory has been extensively studied in \cite{hoffman-noncoin, glover-homer coin, wong}. 
	A pair $(X,g)$ where $g$ is a continuous function on $X$ is said to have the coincidence property if $g$ has a point of coincidence with every continuous function on $X$. In particular, if $g$ is the identity map, then the coincidence property is same as the fixed-point property of $X$.
	
	We generalize Theorem $2$ of \cite{glover-homer} to the setting of coincidence theory and prove the following result.
	\begin{theorem}
		Consider a complex Grassmannian $\mathbb{C}G_{n,k}$ with $k(n-k)$ is even and either $k \leq 3$ and $n > 2k$, or $k>3$ and $n>2k^2 -1$. Let $g $ be a continuous map on $\mathbb{C}G_{n,k}$ with nonzero Brouwer degree. Then the pair $(\mathbb{C}G_{n,k}, g)$ satisfies the coincidence property.
	\end{theorem}

	To conclude, using the Lefschetz coincidence theorem and \thmref{result main}, we obtain multiple situations when two continuous functions on the generalized Dold space $P(m,n,k)$ are guaranteed to have a point of coincidence, and prove the following result.
	\begin{theorem}\label{coin points of f and g}
		Let $P(m,n,k)$ be a generalized Dold manifold with $k<n-k$ and $k(n-k)$ even. Let $f$ and $g$ be two continuous maps on $P(m,n,k)$ and $\tilde f, \tilde g$ be their lifts on the double cover $\mathbb S^m\times \mathbb CG_{n,k}$ of $P(m,n,k)$ such that
		\begin{enumerate}	
			\item $g^*$ is an automorphism of $H^*(P(m,n,k);\mathbb Q)$. 
			\item $\tilde{f}^*(c_1) \neq au,\, a \in \mathbb{Q}$.
			\item $\deg(p\circ g \circ s)\neq -\deg (p\circ f\circ s)$ if $m$ is odd.
		\end{enumerate}
		where $s$ denotes a section of the $\mathbb CG_{n,k}$-bundle projection $p: P(m,n,k)\to \mathbb RP^m$. Then, there is a point of coincidence of $f$ and  $g$.
	\end{theorem}
	As an application of \thmref{coin points of f and g}, we obtain certain pairs $(P(m,n,k),g)$ that satisfy the coincidence property.
	
	The paper is organized as follows: \\
	In \secref{section 2}, we develop the necessary background and recall some relevant results. \secref{section 3} is devoted to the study of rigidity of cohomology automorphisms of homogeneous spaces and graded endomorphisms of the rational cohomology algebra of  $\mathbb S^{m}\times \mathbb C G_{n,k}$, from which we extract several consequences. These are applied in \secref{section 4} to obtain the coincidence-theoretic results.

	\section{Preliminaries}  \label{section 2}
	
	In this section, we discuss some preliminaries and recall some results that will be required to proceed with our study.

	\subsection{Cohomology of complex Grassmannians}

	Let $\mathbb{C}G_{n,k}$ denote the complex Grassmannian consisting of complex $k$-planes in $\mathbb{C}^n$. 
	Let $\gamma_{n,k}$ and $\beta_{n,k}$ denote the canonical complex $k$-plane and $(n-k)$-plane bundles, respectively, over $\mathbb{C}G_{n,k}$.
	Let the total Chern classes of the vector bundles $\gamma_{n,k}$ and $\beta_{n,k}$ be denoted by $c(\gamma_{n,k}) = c$ and $c(\beta_{n,k}) = \bar{c}$, respectively. Thus,
	$$c = 1 + c_1 + c_2 + \cdots + c_k, \quad \bar{c} = 1 + \bar{c}_1 + \bar{c}_2 + \cdots + \bar{c}_{n-k},$$
	where $c_i$ and $\bar{c}_i$ denote the $i$-th Chern classes of $\gamma_{n,k}$ and $\beta_{n,k}$, respectively.
	Since $\gamma_{n,k} \oplus \beta_{n,k} \cong \varepsilon_{\mathbb{C}}^n$, it follows that  $c \cdot \bar{c} = 1$.
	The  cohomology ring of the complex Grassmannian is well known and given by  
	$$
	H^*_{\mathbb{C}G}:=H^*(\mathbb{C}G_{n,k};\mathbb{Q}) \cong \mathbb{Q}[c_1, c_2, \dots, c_k, \bar{c}_1, \bar{c}_2, \dots, \bar{c}_{n-k}]/\langle h_r: 1\leq r\leq n\rangle,$$  
	where  the relations $h_r$ for $r = 1, 2, \dots, n$ are induced from the homogeneous parts of the equation $c\cdot \bar c=1$ and given by  
	\[
	h_r := \sum_{i+j=r} c_i \bar{c}_j.
	\]  
	Using the relations $h_r, r=1,2,...,n-k$, the generators $\bar{c}_i$ for $i = 1, 2, \dots, n-k$ can be expressed inductively in terms of $c_i$ for $i = 1, 2, \dots, k$.  Consequently, the relations $h_r$ for $r = n-k+1, \dots, n$ become homogeneous polynomials in $c_i$ of degree $2r$, where the degree of each $c_i$ is $2i$. Then the cohomology ring $H^*_{\mathbb{C}G}$ can be rewritten as  
	\begin{equation}\label{cohomo of grass}
		\mathbb{Q}[c_1, c_2, \dots, c_k]/\langle h_{n-k+1}, h_{n-k+2}, \dots, h_n \rangle.
	\end{equation}
	Since there are no relations among the generators $c_i$ for $i = 1, 2, \dots, k$ up to degree $2(n-k)$, the set of all monomials of degree $2r$ in terms of $c_1, c_2, \ldots, c_k$ forms a $\mathbb{Q}$-basis of $H^{2r}(\mathbb{C}G_{n,k};\mathbb{Q})$ for $r \leq n-k$.
	
	From now on, we denote the indexing set $\{1,2,\dots, k\}$ by $I$.
	\begin{remark}
		We can assume  $k\leq n-k$ for  $\mathbb{C}G_{n,k}$ as $\mathbb{C}G_{n,k}$ is homeomorphic to $\mathbb{C}G_{n,n-k}$ by using orthogonal complementation.
	\end{remark}
	
	\subsection{Cohomology of homogeneous spaces} 
		Let $G$ be a compact connected Lie group and $H$ be a closed subgroup of maximal rank. Denote by $H^*(G/H;\mathbb Q)$, the cohomology algebra of the homogeneous space $G/H$. Now we recall a result given in \cite{shiga-tezuka}.  
		\begin{theorem}[\cite{shiga-tezuka}, Theorem \(A^{'}\)]\label{Tezuka}
			Let $D_i(H^*(G/H;\mathbb{Q}))$ be the $\mathbb{Q}$-vector space of $\mathbb{Q}$-derivations of $H^*(G/H;\mathbb{Q})$ which decreases the degree by $i>0$ where $G$ is a connected, compact Lie group and $H$ is a closed subgroup of maximal rank.
			Then, for all $i$, $$D_i(H^*(G/H;\mathbb{Q})) =0.$$  
		\end{theorem}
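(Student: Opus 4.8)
The plan is to translate the statement into a purely algebraic property of the Borel presentation of $H^*(G/H;\mathbb{Q})$, verify that property directly for the full flag manifold $G/T$, and then descend. Throughout set $A:=H^*(G/H;\mathbb{Q})$. Since $H$ has maximal rank, $\chi(G/H)>0$ and $H^{\mathrm{odd}}(G/H;\mathbb{Q})=0$; hence a $\mathbb{Q}$-derivation lowering degree by an \emph{odd} amount sends the (purely even) algebra into odd degrees and is automatically zero. So I may assume the given derivation $D$ lowers degree by $2j$ with $j\ge 1$.

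By Borel's theorem $A\cong \mathbb{Q}[\mathfrak{t}^{*}]^{W_H}/\bigl(\mathbb{Q}[\mathfrak{t}^{*}]^{W_G}_{+}\bigr)$; writing $\mathbb{Q}[\mathfrak{t}^{*}]^{W_H}=\mathbb{Q}[y_1,\dots,y_n]$ ($n=\mathrm{rank}\,G$, by Chevalley) and letting $f_1,\dots,f_n$ be the basic $W_G$-invariants expressed in the $y_s$, one gets a graded complete intersection $A=\mathbb{Q}[y_1,\dots,y_n]/(f_1,\dots,f_n)$ — in particular Artinian Gorenstein, with Poincar\'e duality and one-dimensional socle in the top degree $N=\dim G/H$. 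Because $A$ is a complete intersection, the conormal sequence
\[
0\longrightarrow I/I^{2}\xrightarrow{\ \mathrm{Jac}\ }\bigoplus_{s}A\,dy_s\longrightarrow \Omega_{A/\mathbb{Q}}\longrightarrow 0
\]
is exact; applying $\mathrm{Hom}_A(-,A)$ and feeding in Poincar\'e duality $A_p\cong \mathrm{Hom}_{\mathbb{Q}}(A_{N-p},\mathbb{Q})$, one identifies $\mathrm{Der}_{-2j}(A)$ with the $\mathbb{Q}$-linear dual of $(\Omega_{A/\mathbb{Q}})_{N+2j}$. Thus the theorem becomes \emph{equivalent} to the assertion that $\Omega_{A/\mathbb{Q}}$ is concentrated in degrees $\le N$; equivalently, in the Borel presentation, that the Jacobian relations $\sum_s(\partial f_t/\partial y_s)\,dy_s=0$, $t=1,\dots,n$, already annihilate everything of degree $>N$ in $\bigoplus_s A\,dy_s$.

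I would prove this first for $G/T$, where it is essentially classical. Here $A=H^*(G/T;\mathbb{Q})$ is generated in degree $2$, so $D$ is forced to vanish unless $j=1$, and a degree $-2$ derivation amounts to contraction $\iota_\ell$ by some $\ell\in\mathfrak{t}$ that must descend to the quotient. One may finish in either of two ways: test $\iota_\ell$ on the degree-$4$ (quadratic Casimir) invariant of each simple factor and invoke nondegeneracy of the Killing form to force $\ell=0$; or, uniformly, recall that the Jacobian determinant of the basic $W_G$-invariants is a nonzero scalar multiple of $\prod_{\alpha\in R_G^{+}}\alpha$, whose class in $H^*(G/T;\mathbb{Q})$ is the (nonzero) fundamental class and hence generates the socle, so Cramer's rule pushes the derivation vector into the maximal ideal — impossible in the non-positive degree it occupies. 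Either way $\mathrm{Der}_{<0}(H^*(G/T;\mathbb{Q}))=0$.

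The remaining and genuinely hard step is the descent $G/T\rightsquigarrow G/H$. One has $H^*(G/T;\mathbb{Q})\cong H^*(G/H;\mathbb{Q})\otimes H^*(H/T;\mathbb{Q})$ (Leray--Hirsch), so $A$ sits inside $B:=H^*(G/T;\mathbb{Q})$ as the ring of $W_H$-invariants, with $B$ free over $A$; and the polynomial factorization $\prod_{\alpha\in R_G^{+}}\alpha=\bigl(\prod_{\alpha\in R_H^{+}}\alpha\bigr)\cdot q$ exhibits the $W_H$-invariant polynomial $q$ as a representative of the pulled-back fundamental class of $G/H$, i.e.\ of the socle generator of $A$. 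Given $D\in\mathrm{Der}_{-2j}(A)$, the idea is to combine $W_H$-averaging (the transfer $B\to A$) with this socle/Jacobian identity to show that the only admissible derivation vector lands in a degree where $A$ vanishes, forcing $D=0$. I expect this descent to be the main obstacle: $B$ is \emph{not} \'etale over $A$ (it ramifies along the reflection hyperplanes of $W_H$), so a derivation of $A$ does not simply lift to $B$, and one must use the root-system geometry — not merely ``Gorenstein complete intersection'', for which the analogous statement is the still-open Halperin conjecture — to push the averaging argument through.
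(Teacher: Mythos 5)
First, a point of reference: the paper does not prove this statement at all --- it is imported verbatim as Theorem $A'$ of Shiga--Tezuka \cite{shiga-tezuka}, so there is no in-paper argument to compare against. Judged on its own terms, your proposal contains a genuine gap, and you name it yourself: the descent from $G/T$ to a general maximal-rank quotient $G/H$ is only described as an ``idea'' (average over $W_H$, use the factorization $\prod_{\alpha\in R_G^+}\alpha=\bigl(\prod_{\alpha\in R_H^+}\alpha\bigr)q$ and the socle), and you concede it is the main obstacle. That step is the entire content of the theorem. The $G/T$ case you do prove is the easy one precisely because $H^*(G/T;\mathbb{Q})$ is generated in degree $2$, which forces $j=1$ and reduces everything to contraction by an element of $\mathfrak{t}$; for general $H$ the generators $y_s$ sit in many degrees, so a negative derivation has many potentially nonzero components and no single quadratic-form argument kills it. Your proposed transfer mechanism also has the structural problem you flag: a derivation of $A=B^{W_H}$ need not extend to $B$, and the $W_H$-averaging map $B\to A$ is $A$-linear but does not respect the Leibniz rule, so nothing you have written actually produces a contradiction from a nonzero $D\in D_{2j}(A)$. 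Your reduction of the problem to the vanishing of $(\Omega_{A/\mathbb{Q}})_{>N}$ via the conormal sequence and Poincar\'e duality is a correct and clean reformulation, but as you note it cannot be settled by ``Artinian Gorenstein complete intersection'' alone (that is the open Halperin conjecture), so the root-theoretic input is indispensable and is exactly what is missing.

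For orientation: the actual proof in \cite{shiga-tezuka} works directly with the Borel presentation $A\cong\mathbb{Q}[y_1,\dots,y_n]/(f_1,\dots,f_n)$, where the $y_s$ are basic $W_H$-invariants and the $f_t$ are basic $W_G$-invariants, and the decisive input is that the Jacobian determinant $\det(\partial f_t/\partial y_s)$ is nonzero in $A$ (it represents, up to a nonzero scalar, the socle generator, and its nonvanishing is equivalent to $\chi(G/H)>0$); one then lifts a negative-degree derivation to $\tilde D=\sum_s g_s\,\partial/\partial y_s$ on the polynomial ring with $\tilde D(I)\subseteq I$ and uses the resulting matrix relation $\sum_s g_s\,\partial f_t/\partial y_s\in I$ together with the invertibility of the Jacobian in the total quotient sense to force each $g_s$ into $I$ by a degree count. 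Your sketch gestures at exactly this Jacobian/socle identity, so the strategy is aligned with the source, but the argument as written stops short of executing it.
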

		The complex Grassmannian $\mathbb{C}G_{n,k}$ is a homogeneous space and can be represented as the quotient of the unitary group $U(n)$ by the stabilizer subgroup $U(k)\times U(n-k)$ that is
		\begin{equation}\label{cgn as hom}
			\mathbb{C}G_{n,k} = U(n)/ (U(k)\times U(n-k)). 
		\end{equation} 
	Consider a topological space $X$ whose cohomology has trivial cup product in positive degrees. Using the cohomology algebras $H^*(G/H;\mathbb Q)$ and $H^*(X;\mathbb Q)$ , one can study the cohomology algebra $H^*(X\times G/H;\mathbb Q)$. Indeed, the cohomology algebras $H^*(G/H;\mathbb Q)$ and $H^*(X;\mathbb Q)$ can be viewed as subalgebras of $H^*(X\times G/H;\mathbb Q)$ via the images of induced monomorphisms in cohomology by the first and second projection maps, respectively. Let $i: G/H \hookrightarrow X \times G/H$ be an inclusion of second factor and $p: X \times G/H \twoheadrightarrow G/H$ the projection onto the second factor. This induces the following maps 
	\begin{equation}\label{epi and mono}
		i^*: H^*(X\times G/H;\mathbb Q) \twoheadrightarrow H^*(G/H;\mathbb Q), \quad p^* : H^*(G/H;\mathbb Q) \hookrightarrow H^*(X\times G/H;\mathbb Q).
	\end{equation}
	

	\subsection{Graded endomorphisms on $\mathbf{H^*_{\mathbb{C}G}}$}
	It was conjectured in \cite{O} that any graded endomorphism $\phi$ of the cohomology algebra $H^*_{\mathbb{C}G}$ is an\textit{ Adams }map when $k < n - k$; that is, there exists a rational $\lambda$ such that
	$\phi(c_i) = \lambda^i c_i$, for all $ i \in I.$ Glover and Homer (see \cite{glover-homer}) and Hoffman (see \cite{hoffman}) proved the conjecture under the following hypothesis respectively:
	\begin{align}
		\text{Either } k \leq 3 \text{ and } n > 2k \text{, or } k>3 \text{ and } n>2k^2 -1. \label{Homer}\\
		\text{ The graded endomorphism } \varphi  \text{ of } H^*_{\mathbb{C}G} \text{ satisfies } \varphi(c_1) = \lambda c_1, \lambda\neq 0.  \label{Hoff}
	\end{align}
	Let us recall those results proved in \cite{glover-homer, hoffman} that will be used in the rest of this paper.  
	\begin{theorem}[\cite{glover-homer}, Theorem 1, \cite{hoffman}, Theorem 1.1]\label{hom and hof}
		(i) Assume that the hypothesis \eqref{Homer} is satisfied. Then for every graded endomorphism $\varphi$ on $ H^*(\mathbb{C}G_{n,k}; \mathbb{Q})$, there exists a rational $\lambda$ such that
		\[\varphi(c_i) = \lambda^i c_i,  \quad \forall i \in I.\]
		(ii) Assume that the hypothesis \eqref{Hoff} is satisfied. Then, we have
		$$\varphi(c_i) = \begin{cases}
			\lambda^i c_i,   \forall i \in I& \text{ if } k<n-k,\\
			\lambda^i c_i,  \forall i \in I \quad \text{ or } \quad(-\lambda)^i (c^{-1})_i,    \forall i \in I & \text{ if } k= n-k,
		\end{cases}$$
		where $ (c^{-1})_i $ is the $ 2i $-dimensional part of the inverse of $ c = 1 + c_1 + \cdots + c_k $ in $ H^*(\mathbb CG_{n,k}; \mathbb{Q}) $.
	\end{theorem}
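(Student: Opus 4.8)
The result is \cite[Theorem~1]{glover-homer} together with \cite[Theorem~1.1]{hoffman}; we only indicate the route one would follow, since it also fixes notation used later. The starting point is the presentation \eqref{cohomo of grass}: write $C(t)=1+c_1t+\cdots+c_kt^k$ and let $\bar C(t)=C(t)^{-1}=\sum_{j\ge 0}(c^{-1})_jt^j$ be the formal inverse in $\mathbb Q[c_1,\dots,c_k][[t]]$. One checks that the ideal $\langle h_{n-k+1},\dots,h_n\rangle$ equals the ideal $I$ generated by $(c^{-1})_{n-k+1},\dots,(c^{-1})_n$, and that $(c^{-1})_j\in I$ for every $j>n-k$. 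Hence a choice of homogeneous elements $\varphi(c_i)\in H^{2i}_{\mathbb{C}G}$, $i\in I$, defines a graded endomorphism of $H^*_{\mathbb{C}G}$ precisely when, setting $P(t)=1+\varphi(c_1)t+\cdots+\varphi(c_k)t^k$, the series $P(t)^{-1}$ reduces modulo $I$ to a polynomial in $t$ of $t$-degree at most $n-k$. Both families in the statement solve this condition: the Adams map corresponds to $P(t)=C(\lambda t)$, with inverse $\bar C(\lambda t)$ of $t$-degree $n-k$; and when $k=n-k$ the involution $L\mapsto L^{\perp}$ corresponds to $P(t)=\bar C(-\lambda t)$, with inverse $C(-\lambda t)$ of $t$-degree $k=n-k$. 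The content of the theorem is that there are no others.

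For part~(ii), note that $H^2_{\mathbb{C}G}=\mathbb Q c_1$, so \eqref{Hoff} merely says $\varphi(c_1)\ne 0$; composing $\varphi$ with a suitable Adams map reduces to $\varphi(c_1)=c_1$, and the goal becomes to show $\varphi(c_i)=c_i$ for all $i$ when $k<n-k$, with the unique extra possibility $\varphi(c_i)=(-1)^i\bar c_i$ when $k=n-k$. We would argue by induction on the least degree $2d$ at which $\varphi$ deviates from one of the two model endomorphisms: the leading discrepancy in degree $2d$ is forced, via the Leibniz rule on the subring generated by $c_1,\dots,c_{d-1}$ and the relations of $H^*_{\mathbb{C}G}$, to extend to a nonzero $\mathbb Q$-derivation of $H^*_{\mathbb{C}G}$ of fixed positive degree, contradicting the vanishing of all degree-lowering $\mathbb Q$-derivations given by \thmref{Tezuka} (using Poincaré duality to trade a degree-raising derivation for a degree-lowering one). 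Making the ``discrepancy is a derivation'' step precise, and checking that it is nonzero, is the main technical point here.

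Part~(i) is the harder half. Under the numerical hypothesis \eqref{Homer} one must in addition rule out $\varphi(c_1)=0$, i.e.\ show that a graded endomorphism of $H^*_{\mathbb{C}G}$ vanishing in degree two vanishes in all positive degrees. Feeding $\varphi(c_1)=0$ into $P(t)\,P(t)^{-1}=1$ together with the requirement that $P(t)^{-1}$ truncate at $t$-degree $n-k$, and using that each $\varphi(c_i)$ then has no term linear in $c_1$, forces a chain of identities among Schur-type polynomials in $c_2,\dots,c_k$; the explicit inequalities ($k\le 3$ and $n>2k$, or $k>3$ and $n>2k^2-1$) are exactly what makes the resulting linear system have only the zero solution. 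This combinatorial analysis is the real obstacle: it genuinely uses the size of $n$ relative to $k$, which is why the general statement that ``vanishing in degree two implies triviality'' conjectured in \cite{hoffman} is still open outside the range \eqref{Homer}.
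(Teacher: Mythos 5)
This statement is imported from the literature: the paper offers no proof of it at all, only the citations to \cite{glover-homer} and \cite{hoffman}. Your opening sentence therefore already matches everything the paper does, and a proof by citation is the intended treatment here. The rest of your text is an added sketch of the \emph{external} proofs, so it cannot be compared against anything in the paper; but since you supplied it, one step deserves a concrete warning.

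The route you propose for part~(ii) --- normalize to $\varphi(c_1)=c_1$ and derive a contradiction from \thmref{Tezuka} --- has a genuine gap, not merely a ``technical point to be made precise.'' The discrepancy $\delta:=\varphi-\mathrm{id}$ satisfies only the twisted rule $\delta(xy)=\delta(x)\varphi(y)+x\,\delta(y)$, and, crucially, it \emph{preserves} degree rather than lowering or raising it. \thmref{Tezuka} kills only derivations that strictly lower degree, and degree-preserving derivations of $H^*_{\mathbb{C}G}$ certainly exist (the Euler derivation $x\mapsto(\deg x)\,x$ is one), so no amount of Poincar\'e duality will convert the lowest-degree discrepancy into an object that the theorem forbids. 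This is presumably why Hoffman's actual argument is a direct computation with the relation $c\cdot\bar c=1$, tracking how $\varphi$ interacts with the generators $(c^{-1})_{n-k+1},\dots,(c^{-1})_n$ of the defining ideal, rather than a derivation argument; note also that \cite{shiga-tezuka} postdates \cite{hoffman}, so it cannot be the engine of that proof. Your description of part~(i) --- feeding $\varphi(c_1)=0$ into the truncation condition on $P(t)^{-1}$ and using the inequalities in \eqref{Homer} to force the resulting linear system to have only the zero solution --- is a fair caricature of Glover--Homer. If you keep the sketch, either delete the derivation step for~(ii) or replace it with the actual mechanism from \cite{hoffman}; as written it suggests a strategy that does not close.
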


	\subsection{Generalized Dold spaces}  \label{gen dold}
	In \cite{dold}, the author introduced the notion of \textit{classical Dold manifolds}  
	$P(m,n) := \mathbb{S}^m \times \mathbb{C}P^n / \!\! \sim$  
	where $ (s, L) \sim (-s, \bar{L}) $, where the involution $L\mapsto\bar L$ on $\mathbb CG_{n,k}$ is induced from the standard conjugation on $\mathbb C^n$, to construct generators in odd dimensions for René Thom's unoriented cobordism ring. 

	In \cite{nath-sankaran, mandal-sankaran}, the authors generalized the notion of classical Dold manifolds by replacing the sphere $ \mathbb{S}^m $ with an arbitrary topological space $ S $ equipped with a free involution $ \alpha $, analogous to the antipodal map on $ \mathbb{S}^m $, and $ \mathbb CP^n $ with an arbitrary topological space $ X $ with an involution $ \sigma: X \to X $ having a nonempty fixed-point set, analogously to complex conjugation on $\mathbb CP^n$. Then the quotient space  
	\begin{equation}\label{gen dold space}
		P(S, \alpha, X, \sigma) := S \times X / \!\! \sim, \quad \text{where } (s, x) \sim (\alpha(s), \sigma(x)), 
	\end{equation}  
	is called \textit{generalized Dold space} (in short GDS), often denoted simply as $ P(S, X) $. Moreover, the quotient map  
	$ S \times X \to P(S,X) $  
	is a double covering map. 
	
	Let us fix a notation $ Y $ for $ S/\!\!\sim_{\alpha} $, where $ s\sim_{\alpha} \alpha(s), \forall s\in S $. Then, a GDS $ P(S,X) $ is the total space of a fiber bundle  
	$X\hookrightarrow P(S,X) \twoheadrightarrow Y $, where the fiber bundle projection is \begin{equation}\label{proj}
		p: P(S,X) \twoheadrightarrow Y, \quad [s,x]\mapsto [s].
	\end{equation}  Choosing a fixed-point of $\sigma$, say $x_0\in \text{Fix}(\sigma)\neq \emptyset$, we can construct a section of the fiber bundle \begin{equation}\label{sectio}
		s: Y \hookrightarrow P(S,X), \quad [s]\mapsto [s,x_0].
	\end{equation}    
	In fact, we have an embedding  
	$Y\times \text{Fix}(\sigma)\hookrightarrow P(S,X),$  
	where $ \text{Fix}(\sigma) \subseteq X $ has the subspace topology induced from $ X $.

	\subsection{Rational cohomology of $\mathbf{P(\mathbb S^m,\mathbb CG_{n,k})}$} \label{gds}

	The GDS $P(\mathbb S^m,\mathbb CG_{n,k})$ is defined as
	\[
	\mathbb S^m\times \mathbb CG_{n,k}/\!\!\sim, \text { where } (s,L)\sim (-s,\bar L),
	\]
	for which, $\mathbb S^m$ is equipped with the free action generated by the antipodal map $\alpha$ and the involution $\sigma: L \mapsto \bar L$ on $\mathbb CG_{n,k}$ is induced from the standard complex conjugation on $\mathbb C^n.$ We denote $P(\mathbb S^m,\mathbb CG_{n,k})$ simply by $P(m,n,k).$
	By the K\"unneth formula, we have
	\begin{equation}\label{Cohomology of H_times}
		H_{\times}^* := H^*(\mathbb{S}^m \times \mathbb{C}G_{n,k}; \mathbb{Q}) \cong H^*(\mathbb{S}^m; \mathbb{Q}) \otimes H^*(\mathbb{C}G_{n,k}; \mathbb{Q}) \cong \frac{\mathbb{Q}[u, c_1, \dots, c_k]}{\langle u^2, h_{n-k+1}, \dots, h_n \rangle}
	\end{equation}
	where $u \in H^m(\mathbb S^m; \mathbb Q)$ denotes the generator corresponding to the fundamental class of $\mathbb S^m$. Note that \begin{equation}\label{H in terms of u}
		H^*_{\times} \cong H^*_{\mathbb{C}G}[u]/\langle u^2 \rangle \cong H^*_{\mathbb{C}G} \oplus u H^*_{\mathbb{C}G},
	\end{equation} where the latter isomorphism is a $\mathbb Q$-module isomorphism. We have that $H^*_{\mathbb{C}G}$ is a subring of $H^*_\times$.
	The product involution $\theta:= \alpha\times\sigma$ on $\mathbb S^m\times \mathbb CG_{n,k}$ induces an involution $\theta^*$ on $H^*_\times$ given by
	\begin{equation}\label{defn of theta*}
		\theta^*(c_i) = (-1)^i c_i,i\in I, \quad \theta^*(u) =  
		\begin{cases}  
			u, & \text{if } m \text{ is odd}, \\  
			-u, & \text{if } m \text{ is even}.  
		\end{cases}  
	\end{equation}
	The cohomology ring $ H^*(P(m, n,k);\mathbb Q) $ was computed in \cite{mandal-sankaran2} and the following result was proved.
	\begin{theorem}[{\cite[Theorem 3.13]{mandal-sankaran2}}]\label{cohomology of P(m,n,k)}
		The cohomology algebra \( H^*(P(m,n,k); \mathbb{Q}) \) is isomorphic to the subalgebra
		$\mathrm{Fix}(\theta^*) \subseteq H^*(\mathbb{S}^m \times \mathbb{C}G_{n,k}; \mathbb{Q}),$
		generated by the following elements:
		\begin{align*}
			u \, c_{2p-1},\quad c_{2j},\quad c_{2p-1} \, c_{2q-1},\; \forall 2p-1, 2q-1, 2j \in I, \text{ if } m \text{ is even};\\
			u,\quad c_{2j},\quad c_{2p-1} \, c_{2q-1},\; \forall 2p-1, 2q-1, 2j \in I, \text{ if } m \text{ is odd}.
		\end{align*}
	\end{theorem}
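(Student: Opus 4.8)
The plan is to deduce the description of $H^*(P(m,n,k);\mathbb{Q})$ from the double covering $\pi\colon \mathbb{S}^{m}\times\mathbb{C}G_{n,k}\to P(m,n,k)$ together with an explicit invariant‑theory computation inside $H^*_\times$. First I would invoke the transfer homomorphism for the free $\mathbb{Z}/2$‑action generated by $\theta=\alpha\times\sigma$: since $P(m,n,k)$ is a closed manifold and $2$ is invertible in $\mathbb{Q}$, the transfer $\mathrm{tr}$ and $\pi^*$ satisfy $\mathrm{tr}\circ\pi^*=2\cdot\mathrm{id}$ and $\pi^*\circ\mathrm{tr}=\mathrm{id}+\theta^*$. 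Hence $\pi^*$ is an injective algebra homomorphism, and since $\pi\circ\theta=\pi$ forces $\theta^*\circ\pi^*=\pi^*$ while $\pi^*\bigl(\tfrac12\mathrm{tr}(x)\bigr)=\tfrac12(\mathrm{id}+\theta^*)(x)=x$ for every $x\in\mathrm{Fix}(\theta^*)$, its image is exactly $\mathrm{Fix}(\theta^*)$. Thus $\pi^*$ identifies $H^*(P(m,n,k);\mathbb{Q})$ with the subalgebra $\mathrm{Fix}(\theta^*)\subseteq H^*_\times$, and the theorem reduces to exhibiting algebra generators of $\mathrm{Fix}(\theta^*)$.

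For that I would use the $\mathbb{Q}$‑module splitting $H^*_\times\cong H^*_{\mathbb{C}G}\oplus uH^*_{\mathbb{C}G}$ from \eqref{H in terms of u} and the formulas $\theta^*(c_i)=(-1)^i c_i$, $\theta^*(u)=\pm u$ from \eqref{defn of theta*}. Writing an element as $a+ub$ with $a,b\in H^*_{\mathbb{C}G}$, it is $\theta^*$‑fixed precisely when $a$ lies in the invariant subalgebra $A:=\mathrm{Fix}(\theta^*|_{H^*_{\mathbb{C}G}})$ and $b$ lies in $A$ (if $m$ is odd) or in the $(-1)$‑eigenspace $B$ of $\theta^*|_{H^*_{\mathbb{C}G}}$ (if $m$ is even); so $\mathrm{Fix}(\theta^*)=A\oplus uA$ or $A\oplus uB$ accordingly. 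Next I would run the elementary invariant theory of the diagonal involution $c_i\mapsto(-1)^i c_i$ on the polynomial ring $\mathbb{Q}[c_1,\dots,c_k]$: its invariant subring is generated by the even‑indexed $c_{2j}$ together with the products $c_{2p-1}c_{2q-1}$ of odd‑indexed variables, and its $(-1)$‑eigenspace is the module over that subring generated by the $c_{2p-1}$. Since the ideal $\langle h_{n-k+1},\dots,h_n\rangle$ is $\theta^*$‑stable — applying $\theta^*$ to $c\bar c=1$ gives $\theta^*(\bar c_j)=(-1)^j\bar c_j$, whence $\theta^*(h_r)=(-1)^r h_r$ — and since taking $\mathbb{Z}/2$‑invariants is an exact functor over $\mathbb{Q}$, these generators descend to $H^*_{\mathbb{C}G}$: $A$ is generated by $c_{2j}$ and $c_{2p-1}c_{2q-1}$, and $B$ is the $A$‑module generated by the $c_{2p-1}$. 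Assembling the two cases: for $m$ odd, $\mathrm{Fix}(\theta^*)=A\oplus uA$ is generated by $u$, $c_{2j}$, $c_{2p-1}c_{2q-1}$; for $m$ even, $\mathrm{Fix}(\theta^*)=A\oplus uB$ is generated by $uc_{2p-1}$, $c_{2j}$, $c_{2p-1}c_{2q-1}$, and here the products $c_{2p-1}c_{2q-1}$ genuinely cannot be dropped since $(uc_{2p-1})(uc_{2q-1})=u^2c_{2p-1}c_{2q-1}=0$. Transporting back along $\pi^*$ yields the stated presentation.

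The transfer step is routine; the part needing care is the descent from $\mathbb{Q}[c_1,\dots,c_k]$ to the quotient $H^*_{\mathbb{C}G}$ — namely that generators of the invariant subring and of the $(-1)$‑eigenspace of the polynomial ring still generate those of the quotient. This works because the defining ideal is $\theta^*$‑stable and, $2$ being invertible in $\mathbb{Q}$, the functor of $\mathbb{Z}/2$‑invariants is exact, so that $(H^*_{\mathbb{C}G})^{\mathbb{Z}/2}$ and its $(-1)$‑eigenspace are precisely the images of the corresponding objects for $\mathbb{Q}[c_1,\dots,c_k]$; over $\mathbb{F}_2$ the analogous description would fail. (The low‑degree monomial basis recalled around \eqref{cohomo of grass} makes the eigenspace bookkeeping concrete.)
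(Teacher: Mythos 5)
Your proposal is correct. Note that the paper does not prove this statement at all --- it is quoted verbatim from \cite[Theorem 3.13]{mandal-sankaran2} --- so there is no in-paper argument to compare against; your write-up supplies a complete, self-contained proof along the standard lines one would expect (and along the lines of the cited source): the transfer for the free $\mathbb{Z}/2$-cover identifies $H^*(P(m,n,k);\mathbb{Q})$ with $\mathrm{Fix}(\theta^*)$, the splitting $H^*_\times\cong H^*_{\mathbb{C}G}\oplus uH^*_{\mathbb{C}G}$ reduces the problem to the $\pm1$-eigenspaces of $c_i\mapsto(-1)^ic_i$, and the descent from $\mathbb{Q}[c_1,\dots,c_k]$ to $H^*_{\mathbb{C}G}$ is justified by the $\theta^*$-stability of the ideal (using $\theta^*(h_r)=(-1)^rh_r$) together with exactness of invariants in characteristic $0$. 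The only cosmetic point is that the transfer argument needs no appeal to $P(m,n,k)$ being a closed manifold --- it holds for any finite covering --- but this does not affect correctness.
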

	A description of the cohomology algebra $ H^*(P(m,n,k); \mathbb{Q}) $, as a quotient of a polynomial algebra, can be deduced as a particular case in Theorem 3.14 of \cite{mandal-sankaran2}.

	
	\section{Rigidity of Cohomology Automorphisms}\label{section 3}
	
	 In this section, we  study an extended rigidity phenomena of rational cohomology automorphisms of certain homogeneous spaces in the presence of external cohomology classes. 
	\subsection{}	The following theorem is one of the main result of this section.
\begin{theorem}\label{rigidity theorem}
			
			Let $X$ be a path-connected finite CW complex with trivial cup product in its rational cohomology algebra.
			Let $G$ be a compact connected Lie group and $H$ be a closed subgroup of maximal rank.
			Consider a graded endomorphism $$\phi: H^*(X\times G/H;\mathbb Q)\to H^*(X\times G/H;\mathbb Q)$$ 
			such that $i^*\circ\phi\circ p^*$ is  an automorphism $$\phi_0: H^*(G/H;\mathbb Q)\to H^*(G/H;\mathbb Q)$$ where $i^*$ and $p^*$ are given in \eqref{epi and mono}.
			
			Then, $\phi(z)=\phi_0(z)$ for all $z\in H^*(G/H;\mathbb Q).$
			
		\end{theorem}
		\begin{proof}
			
			We are given that $X$ is finite CW complex which implies that $H^*(X;\mathbb Q)$ is finite dimensional $\mathbb Q$-algebra. 
			Choose a $\mathbb Q$-basis of $H^*(X;\mathbb Q)$ 
			$$B:=\{u_0,u_1,u_2,\ldots, u_r\}$$ 
			where $u_0=1\in H^0(X;\mathbb Q)\cong \mathbb Q$ and $\deg(u_i)>0$ for each $i=1,2,\ldots, r$. 
			

			
			By the K\"unneth formula, we have $H^*(X\times G/H;\mathbb Q)\cong H^*(X;\mathbb Q)\otimes H^*(G/H;\mathbb Q)$ as graded $\mathbb Q$-algebras. Hence, we have
			\begin{equation}\label{Kunneth}
				H^*(X\times G/H;\mathbb Q)\cong \bigoplus_{i=0}^r u_iH^*(G/H;\mathbb Q),
			\end{equation} where the cohomology algebra $H^*(G/H;\mathbb Q)$ is identified with the subalgebra $u_0 H^*(G/H;\mathbb Q)$.

			Let $p_0: \bigoplus_{i=0}^r u_iH^*(G/H;\mathbb Q)\twoheadrightarrow H^*(G/H;\mathbb Q)$ be the projection onto the zeroth summand and $i_0: H^*(G/H;\mathbb Q)) \hookrightarrow \bigoplus_{i=0}^r u_iH^*(G/H;\mathbb Q)$ be the inclusion into the zeroth summand. In fact, $p_0=i^*$ and $i_0=p^*$ as given in \eqref{epi and mono}.  We have the following commutative diagram.
			\begin{equation}\label{commutative diagram}
				\begin{tikzcd}
					{\bigoplus_{i=0}^r u_iH^*(G/H;\mathbb Q)} & {\bigoplus_{i=0}^r u_iH^*(G/H;\mathbb Q)} \\
					{H^*(G/H;\mathbb Q)} & {H^*(G/H;\mathbb Q)}
					\arrow["\phi", from=1-1, to=1-2]
					\arrow["{i^*}", two heads, from=1-2, to=2-2]
					\arrow["{p^*}", hook, from=2-1, to=1-1]
					\arrow["{\phi_0}", from=2-1, to=2-2]
				\end{tikzcd}
			\end{equation}
			It is given that the  composition $\phi_0:= i^* \circ \phi \circ p^*$ is an algebra automorphism of $H^*(G/H;\mathbb Q)$.
			Thus, for each  $z \in H^{*}(G/H;\mathbb Q) \subseteq H^*(X\times G/H;\mathbb Q) $, $\phi(z)$ can be written as
			\begin{equation}\label{phi x as sum}
				\phi(z) = \sum _{i=0}^r u_iP_{z,i}=\phi_0(z)+  \sum_{i=1}^r u_iP_{z,i},
			\end{equation} where $P_{z,i} \in H^*(G/H;\mathbb Q),i=0,1,\ldots,r$, and $P_{z,0}=\phi_0(z)$. 
			

			To prove that $\phi(z) = \phi_0(z)$, it is sufficient to show that $P_{z,i} =0,\, \forall i=1,2,\ldots, r$, in \eqref{phi x as sum}. Since $\phi_0$ is an automorphism, using $\phi_0^{-1}$ and \eqref{phi x as sum}, for each $i=1,2,\ldots, r$, we define $D_i: H^*(G/H;\mathbb Q)\to H^*(G/H;\mathbb Q)$ by
			$$D_i(z) = P_{\phi_0^{-1}(z),i},\, \forall z \in H^*(G/H;\mathbb Q).$$
			Equivalently, we have $D_i(\phi_0(z)) = P_{z,i}$.
			It is easy to check that each $D_i$ is a $\mathbb Q$-linear on $H^*(G/H;\mathbb Q)$.
			We shall prove that, for all $i>0,$ $D_i$ satisfies the Leibniz rule and hence defines a derivation on $H^*(G/H;\mathbb Q)$.
			
			Using \eqref{phi x as sum}, for $z,w\in H^*(G/H;\mathbb Q)$ we have \[
			\phi(z)=\phi_0(z)+ \sum _{i=1}^r u_iP_{z,i}\quad \text{ and } \quad
			\phi(w)=\phi_0(w)+\sum _{i=1}^r u_iP_{w,i}.
			\]
			We are given that $H^*(X;\mathbb Q)$ has a trivial cup product. Therefore, $u_iu_j=0, \, \forall i,j\ge1$. Thus, we have
			\begin{equation}\label{phi x phi y}
				\phi(z)\phi(w)= \phi_0(z)\phi_0(w)+\sum_{i=1}^ru_i\big(\phi_0(z)P_{w,i}+P_{z,i}\phi_0(w)\big). 
			\end{equation}
			Consequently, when $\phi(z)\phi(w)\in H^*(X\times G/H;\mathbb Q)$ is expressed with respect to the $H^*(X;\mathbb Q)$-basis $B$, the coefficient of $u_i$ is given by
			\[
			P_{zw,i}=\phi_0(z)P_{w,i}+P_{z,i}\phi_0(w).
			\]
			Equivalently, it can be written as
			\[
			D_i(\phi_0(zw))=D_i\big(\phi_0(z)\phi_0(w)\big)
			=\phi_0(z)\,D_i(\phi_0(w))+D_i(\phi_0(z))\,\phi_0(w).
			\] Therefore, $D_i$ satisfies the Leibniz rule.
			This proves that for each $i=1,2,\ldots,r$, $D_i$ is a derivation on $H^*(G/H;\mathbb Q)$. For $z \in H^j(G/H;\mathbb Q)$, we have $D_i(z) \in H^{j-\deg(u_i)}(G/H;\mathbb Q)$ which implies that the derivation $D_i$ decreases the degree by $\deg(u_i)>0$.  Using \thmref{Tezuka}, we conclude that $D_i$ is a zero derivation for all $i>0$. 
			
			Thus, for any $z\in H^*(G/H;\mathbb Q)$, we have $P_{z,i}= D_i(\phi_0(z))=0,\, \forall i>0$.
		\end{proof}

		
	\begin{remark}
		We provide a few examples of spaces $X$ that are considered in the Theorem~\ref{rigidity theorem}, i.e., the spaces $X$ with trivial cup product in rational cohomology.
		\begin{enumerate}
			\item Spheres $\mathbb{S}^m,\, m\geq0$.
			\item For an abelian group $G$ and $n\in \mathbb N$, the Moore space $M(G,n)$ is a topological space which has non-trivial reduced homology group $G$ only in dimension $n$ and assumed to be  simply connected if $n>1$. 
			\item The suspension $\Sigma Y$ of any finite CW complex $Y$. 
			\item The wedges of spaces discussed above provide further examples.
		\end{enumerate}
		\end{remark}	
		
			
		

	\subsection{Graded endomorphism of $H^*(\mathbb S^m\times \mathbb CG_{n,k};\mathbb Q)$}
		From now on, we restrict our focus to a  special case where $$X=\mathbb S^m, \quad G/H=\mathbb CG_{n,k}.$$ This set up is essential in our study to develop the coincidence theory in \secref{section 4} of certain spaces, called generalized Dold spaces defined in \subsecref{gds}.
	
	Now, we classify graded endomorphisms of the rational cohomology algebra $ H^*(\mathbb S^m\times \mathbb CG_{n,k}; \mathbb{Q}) $ whose images are nonzero in $H^2(\mathbb CG_{n,k};\mathbb Q)$. Our approach relies on the study of graded endomorphisms of $ H^*(\mathbb{C}G_{n,k}; \mathbb{Q}) $ from \cite{glover-homer} and \cite{hoffman}. Assume $m>0$ for the rest of this paper.
	
	The cohomology ring of the complex Grassmannian $ \mathbb{C}G_{n,k} $ is generated by the Chern classes $c_i,\forall i \in I $ as given in \eqref{cohomo of grass}. In \eqref{Cohomology of H_times}, we see that the cohomology ring of $S^m \times \mathbb{C}G_{n,k}$ is generated by $u, c_i, \forall i \in I$. Therefore, it is sufficient to describe the images of the generators to classify graded endomorphisms of $H_{\times}^*$. Hence, we have the following proposition.

	\begin{proposition}\label{main thm}
		Let $\phi$ be a graded endomorphism of $H^*_{\times}$ satisfying $\phi(c_1) \neq \mu u,\, \mu \in \mathbb{Q}$.  
		Then the following holds, \begin{enumerate}
			\item Either $\phi(u)=au$ for some $a \in \mathbb{Q}$, or $\phi(u) \in H^*_{\mathbb{C}G} \subseteq H^*_{\times}$ with $\phi(u)^2=0$ in $H^*_{\times}$.
			\item There exists $\lambda \in \mathbb Q\backslash\{0\}$ such that
			$$\phi(c_i) = \begin{cases}
				\lambda^i c_i,   \forall i \in I& \text{ if } k<n-k,\\
				\lambda^i c_i,  \forall i \in I \quad \text{ or } \quad(-\lambda)^i (c^{-1})_i,    \forall i \in I & \text{ if } k= n-k,
			\end{cases}$$
		\end{enumerate} where $ (c^{-1})_i $ is the $ 2i $-dimensional part of the inverse of $ c = 1 + c_1 + \cdots + c_k $ in $ H^*_{ \mathbb CG} $.
	\end{proposition}
	
	
	\begin{proof}

		From equation \eqref{Cohomology of H_times} and \eqref{H in terms of u}, we have $H^*_{\times}\cong \mathcal R/\mathcal I \cong H^*_{\mathbb{C}G} \oplus u H^*_{\mathbb{C}G}$, where $\mathcal R:=\mathbb Q[u,c_1,\ldots,c_k]$ and $\mathcal I:=\langle u^2, h_{n-k+1},\ldots,h_n\rangle$.
		

		
		Let $p_1: H^*_{\times} = H^*_{\mathbb{C}G} \oplus u H^*_{\mathbb{C}G} \to H^*_{\mathbb{C}G}$ be the projection onto the first summand and $i_1: H^*_{\mathbb{C}G} \hookrightarrow H^*_{\mathbb{C}G} \oplus u H^*_{\mathbb{C}G}$ be the inclusion into the first summand. The composite $\phi_1:= p_1 \circ \phi \circ i_1$ is a degree-preserving endomorphism of $H^*_{\mathbb{C}G}$. We have the following diagram:
		\begin{equation}\label{comm diagram}
			\begin{tikzcd}
				{H^*_{\mathbb CG}\oplus u H^*_{\mathbb CG}} & {H^*_{\mathbb CG} \oplus u H^*_{\mathbb CG}} \\
				{H^*_{\mathbb CG}} & {H^*_{\mathbb CG}}
				\arrow["\phi", from=1-1, to=1-2]
				\arrow["{p_1}", two heads, from=1-2, to=2-2]
				\arrow["{i_1}", hook, from=2-1, to=1-1]
				\arrow["{\phi_1}", from=2-1, to=2-2]
			\end{tikzcd}
		\end{equation}
		Thus, for  $x \in H^*_{\mathbb{C}G} \subset H^*_\times$, one can write
		$\phi(x) = \phi_1(x) + u P_x$
		for some $P_x \in H^*_{\mathbb{C}G} \subset H^*_\times$ because the kernal of $p_1$, $\ker(p_1) = u H^*_{\mathbb{C}G}$.
		This implies that \begin{equation}\label{defn of phi}
			\phi(c_i) = \phi_1(c_i) + u P_{c_i},\, \forall i \in I.
		\end{equation} For simplicity, denote $P_{c_i}$ by $P_i\in H^{2i-m}_{\mathbb CG}$ which is a polynomial in $c_1, \dots, c_k$ of degree $2i - m$ as $\deg c_i = 2i$ and $\deg u = m$.

		Since $\phi(c_1)\neq \mu u, \, \mu \in \mathbb{Q}$, that implies $\phi(c_1)$ is of the form $\lambda c_1+\mu u,\, \lambda,\mu \in \mathbb{Q}, \, \lambda \neq 0$. Then we have $\phi_1(c_1) = \lambda c_1,\, \lambda\neq 0$ on $H^*_{\mathbb{C}G}$.  By \thmref{hom and hof} \textit{(ii)}, we have 
		\begin{equation}\label{phi_1}
			\phi_1(c_i) = \begin{cases}
				\lambda^i c_i,   \forall i \in I& \text{ if } k<n-k,\\
				\lambda^i c_i,  \forall i \in I \quad \text{ or } \quad(-\lambda)^i (c^{-1})_i,    \forall i \in I & \text{ if } k= n-k,
			\end{cases}
		\end{equation} where $ (c^{-1})_i $ is the $ 2i $-dimensional part of the inverse of $ c = 1 + c_1 + \cdots + c_k $ in $ H^*_{ \mathbb CG} $. Now we prove both parts of the statement.

		\textit{proof of part (1):} Since $\phi$ is a graded endomorphism on $H^*_{\times}$, therefore $$\phi(u) = a u + P, \, a \in \mathbb{Q}, \text{ satisfying } (a u + P)^2 =0,$$ where $P$ is a homogeneous polynomial in $c_1, \dots, c_k$ of degree $m$. We have $P^2 + 2 a u P =0$ in $H^*_{\times}$. Using \eqref{H in terms of u}, we get that $2aP =0$ in $H^*_{\times}=\mathcal{R}/\mathcal{I}$. Hence, either $a=0$ or $P\in \mathcal{I}$.
		
			\textit{proof of part (2):} Let $i: \mathbb CG_{n,k}\hookrightarrow \mathbb S^m\times \mathbb CG_{n,k}$ be an inclusion of the second factor and $p: \mathbb S^m\times \mathbb CG_{n,k}\twoheadrightarrow \mathbb CG_{n,k}$ be the projection onto the second factor. Here, the induced maps in rational cohomology satisfies $i^*=p_1$ and $p^*=i_1.$ Using \eqref{comm diagram}, we have $i^*\circ \phi \circ p^*=\phi_1$. Since $\lambda\neq 0$ in \eqref{phi_1}, it follows that $\phi_1$ is a graded automorphism of $H^*_{\mathbb CG}.$ Using Theorem~\ref{rigidity theorem}, we have the proof.	\end{proof}

	\begin{remark}
		The condition $\phi(c_1)\neq \mu u,\, \mu \in \mathbb Q$ in		Proposition~\ref{main thm} classifies all graded endomorphisms $\phi$ of $H^*_\times$ whose image is nonzero in $H^2_{\mathbb CG}$ if $n>2$. In fact, $n>2$ implies $c_1^2\neq 0$ and $\phi(u) \neq ac_1,\, a \in \mathbb{Q}\setminus\{0\}$ as $\phi(u)^2=0$. Therefore, the only remaining possibility of graded endomorphisms whose image is non-zero in $H^2_{\mathbb{C}G}$ is $\phi(c_1)\neq \mu u,\, \mu \in \mathbb Q.$ 
		
		On the other hand, when $n=2,$ $\mathbb CG_{n,k}$ is either a point or $\mathbb S^2$ and the classification of graded endomorphisms of $H^*_\times$ is easy.
	\end{remark}

	\subsection{} In Proposition~\ref{main thm}, we assume that $\phi(c_1) \neq \mu u$. Let us try to look at the other case where $\phi(c_1) = \mu u$. To address this, we use part (i) of \thmref{hom and hof} which leads to the following proposition.

	\begin{proposition}\label{main thm 2}
		Assume that hypothesis \eqref{Homer} is satisfied.
		Let $\phi$ be a graded endomorphism such that $\phi(c_1)=\mu u,\, \mu \in \mathbb{Q}$ in $H^*_{\times}$. Then
		\begin{enumerate}
			\item Either $\phi(u)=a u$ for some $a \in \mathbb{Q}$, or $\phi(u) \in H^*_{\mathbb{C}G} \subseteq H^*_{\times}$ with $\phi(u)^2=0$ in $H^*_{\times}$.
			\item  $\phi(c_i) = uP_i, \, \forall i >1,$ where $P_i \in H^{2i-m}_{\mathbb CG}\subseteq H^*_\times$. 
		\end{enumerate}
	\end{proposition}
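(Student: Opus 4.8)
The plan is to run exactly the same bookkeeping as in the proof of \thmref{main thm}, but to exploit the stronger rigidity of $H^*_{\mathbb{C}G}$ available under hypothesis \eqref{Homer}. For part \textit{(1)} there is nothing new: the argument establishing part \textit{(1)} of \thmref{main thm} never used the hypothesis on $\phi(c_1)$ — it only used that $\phi$ is a graded endomorphism and that $u^2=0$ in $H^*_\times$ — so it carries over verbatim and yields the same dichotomy, namely $\phi(u)=au$ for some $a\in\mathbb Q$, or $\phi(u)\in H^*_{\mathbb{C}G}$ with $\phi(u)^2=0$. Hence the only content to prove is part \textit{(2)}.

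For part \textit{(2)} I would reuse the decomposition set up in the proof of \thmref{main thm} via diagram \eqref{comm diagram}: for $x\in H^*_{\mathbb{C}G}\subseteq H^*_\times$ write $\phi(x)=\phi_1(x)+uP_x$, where $\phi_1:=p_1\circ\phi\circ i_1$ is the induced graded algebra endomorphism of $H^*_{\mathbb{C}G}$, and set $P_i:=P_{c_i}\in H^{2i-m}_{\mathbb{C}G}$ (the degree count being forced by $\deg c_i=2i$, $\deg u=m$). The one new observation is that the hypothesis $\phi(c_1)=\mu u$ now gives $\phi_1(c_1)=p_1(\mu u)=0$, i.e. $\phi_1$ vanishes on $H^2_{\mathbb{C}G}$. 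Applying \thmref{hom and hof} part \textit{(i)} — this is the step where \eqref{Homer} is essential — every graded endomorphism of $H^*_{\mathbb{C}G}$ is of Adams type, so $\phi_1(c_i)=\lambda^i c_i$ for some $\lambda\in\mathbb Q$; since $c_1\ne 0$ in $H^2_{\mathbb{C}G}$ (hypothesis \eqref{Homer} forces $n>2k\ge 2$, so $\mathbb{C}G_{n,k}$ is not a point), the relation $\phi_1(c_1)=\lambda c_1=0$ yields $\lambda=0$, whence $\phi_1(c_i)=0$ for every $i\in I$. Therefore $\phi(c_i)=uP_i$ with $P_i\in H^{2i-m}_{\mathbb{C}G}$ for all $i\in I$, in particular for $i>1$, which is the assertion.

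I do not anticipate a real obstacle here. In contrast to part \textit{(2)} of \thmref{main thm} — where $\phi_1$ was an \emph{automorphism} and the vanishing $P_i=0$ had to be extracted from the no-nontrivial-derivations input \thmref{Tezuka} — in the present situation $\phi_1$ is identically zero on positive degrees, and the polynomials $P_i$ are left entirely unconstrained. This is exactly consistent with (and explains) the large family of nontrivial endomorphisms $\phi(c_i)=uP_i$ noted after \thmref{main thm}. The only delicate point is that the appeal to \thmref{hom and hof} part \textit{(i)} genuinely requires \eqref{Homer}: without it one cannot exclude a $\phi_1$ that is nonzero yet vanishes on $H^2_{\mathbb{C}G}$, this being precisely the content of Hoffman's conjecture.
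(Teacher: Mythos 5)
Your proposal is correct and follows essentially the same route as the paper: part \textit{(1)} is carried over verbatim from \thmref{main thm}, and part \textit{(2)} uses the decomposition $\phi(c_i)=\phi_1(c_i)+uP_i$ from diagram \eqref{comm diagram} together with \thmref{hom and hof} part \textit{(i)} to conclude $\phi_1(c_i)=0$ for all $i$. Your extra remark that $\lambda=0$ follows from $c_1\neq 0$ is a small explicit justification the paper leaves implicit, but the argument is the same.
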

	\begin{proof} \textit{(1):} The proof of part \textit{(1)} is exactly the same as the proof of part \textit{(1)} of Proposition~\ref{main thm}. Therefore, we omit the details.
		
		\textit{(2):} Using \eqref{comm diagram}, we have that the map $\phi_1$ is a graded endomorphism on $H^*_{\mathbb{C}G}$ such that $\phi_1(c_1) =0$. By \thmref{hom and hof}, $\phi_1(c_i) =0, \, \forall i\in I$, then by \eqref{defn of phi}, we get $\phi(c_i) = uP_i$ for some $P_i \in H^*_{\mathbb{C}G}$, with $\deg(P_i) = 2i - m$. \end{proof}
	\begin{remark}
		In Proposition~\ref{main thm} and \propref{main thm 2}, if we assume $2m \leq n-k$  then $\phi(u)=0$ whenever $\phi(u) \in H^*_{\mathbb{C}G}$. This is because $H^*_{\mathbb{C}G}$ has no nontrivial relations up to degree $2(n-k)$ and $u^2=0$ implies that $\phi(u)^2=0$ forcing $\phi(u)=0$.
		
	\end{remark}
	A graded endomorphism of $H^*_{\mathbb CG}$ that vanishes on
	$H^2_{\mathbb CG}$ is expected to be trivial, in view of Hoffman’s conjecture \cite{hoffman}. However, unlike the case of the complex Grassmannian, there exist many non-trivial graded endomorphisms of $H^*_\times$ that vanish on $H^2_{\mathbb CG}$. The following proposition provides   such examples when $m$ is even and   $1\le m \le 2k$.

	\begin{proposition}
		For each $i\in I$, choose $P_i \in H^{2i-m}_{\mathbb C G} \subseteq H^*_\times$ and either $Q = au,\, a\in \mathbb Q$, or  $Q\in H^*_{\mathbb CG}\subseteq H^*_{\times}$ with $Q^2=0$  in $ H^*_{\times}$. Then there exist a graded endomorphism $\phi$ on $H^*_\times$ such that 
		\[
		\phi(c_i)=uP_i, \; \forall i\in I, \text{ and  } \quad \phi(u)=Q.
		\]
	\end{proposition}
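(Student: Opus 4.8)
The plan is to \emph{construct} $\phi$ directly by prescribing it on the algebra generators $u,c_1,\dots,c_k$ of $H^*_\times$ and then checking that it respects the defining relations. Write $H^*_\times=\mathcal R/\mathcal I$ with $\mathcal R=\mathbb Q[u,c_1,\dots,c_k]$ and $\mathcal I=\langle u^2,\,h_{n-k+1},\dots,h_n\rangle$, as in \eqref{Cohomology of H_times}. Since $\mathcal R$ is a free commutative $\mathbb Q$-algebra, there is a unique $\mathbb Q$-algebra homomorphism $\Phi\colon\mathcal R\to H^*_\times$ with $\Phi(u)=Q$ and $\Phi(c_i)=uP_i$ for $i\in I$ (here $Q$ is taken homogeneous of degree $m$, which is automatic when $Q=au$ and should be assumed in the other case). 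The entire content of the proof is then to verify that $\Phi(\mathcal I)=0$: once this is known, $\Phi$ descends to a well-defined $\mathbb Q$-algebra endomorphism $\phi$ of $H^*_\times$ with the stated values on generators.

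To check $\Phi(\mathcal I)=0$ it suffices to evaluate $\Phi$ on the generators of $\mathcal I$. First, $\Phi(u^2)=Q^2=0$ in $H^*_\times$: if $Q=au$ this is $a^2u^2=0$, and otherwise it is exactly the hypothesis $Q^2=0$. Next, fix $r$ with $n-k+1\le r\le n$. The key observation is a degree count: since $k\le n-k$ we have $r\ge n-k+1>k$, so no monomial of degree $2r$ in $c_1,\dots,c_k$ (where $\deg c_i=2i\le 2k$) can consist of a single variable; hence the homogeneous polynomial $h_r$ lies in the square $\langle c_1,\dots,c_k\rangle^2$ of the ideal generated by $c_1,\dots,c_k$ in $\mathcal R$. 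Therefore $\Phi(h_r)$ is an $H^*_\times$-linear combination of products $(uP_i)(uP_j)=u^2P_iP_j=0$, so $\Phi(h_r)=0$. Thus $\Phi$ factors through $\mathcal I$, giving $\phi\colon H^*_\times\to H^*_\times$ with $\phi(c_i)=uP_i$ and $\phi(u)=Q$.

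It remains to note that $\phi$ is graded, which is immediate from the degree bookkeeping: $\deg\phi(u)=\deg Q=m=\deg u$, and for each $i\in I$ with $P_i\ne 0$ one has $P_i\in H^{2i-m}_{\mathbb CG}$, so $\deg\phi(c_i)=m+(2i-m)=2i=\deg c_i$ (when $P_i=0$ there is nothing to check, and similarly $P_i=0$ forced whenever $2i<m$ or $m$ is odd). There is no genuine obstacle in this argument; the only point demanding any care is the degree-counting claim that $h_r\in\langle c_1,\dots,c_k\rangle^2$ for $r>k$, which is precisely what forces the images of the Grassmannian relations to be divisible by $u^2$ and hence to vanish. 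Everything else is a formal verification that a homomorphism out of a polynomial ring kills a given ideal.
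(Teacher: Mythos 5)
Your proof is correct and follows essentially the same route as the paper: define the map on the generators of $\mathcal R=\mathbb Q[u,c_1,\dots,c_k]$ and check that $u^2$ and the relations $h_r$ are killed because every $\Phi(c_i)$ is divisible by $u$. The only difference is that you make explicit the degree count showing $h_r\in\langle c_1,\dots,c_k\rangle^2$ for $r\ge n-k+1>k$, which the paper asserts without comment via the inclusion $\mathcal I\subseteq\langle u^2,\,c_ic_j\mid i,j\in I\rangle$.
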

	\begin{proof}
		Define $\phi$ on $H^*_{\times} = \mathcal{R}/\mathcal{I}$ by $\phi(c_i)=uP_i, \; \forall i\in I, \text{ and } \phi(u)=Q.$ It is sufficient to prove that $\phi$ is well defined, that is, $\mathcal{I}\subseteq \ker (\phi)$. Observe that $u^2 =0$ in $H^*_{\times}$ which implies that \begin{equation}\label{ideal cicj}
			\phi(c_i c_j) = \phi(c_i) \phi(c_j) = uP_i \cdot uP_j = u^2 P_i P_j = 0.
		\end{equation} Using \eqref{ideal cicj} and $\phi(u^2)=Q^2 =0$, we have $\mathcal{I}\subseteq \langle u^2, c_i c_j \,|\, i,j \in I \rangle \subseteq \ker(\phi).$ 
	\end{proof}

	\subsection{} In this subsection, we derive some immediate applications of \propref{main thm}.
	\begin{corollary}
		Let us consider $X = \mathbb{S}^{2m_1} \times \cdots \times \mathbb{S}^{2m_r} \times \mathbb{C}G_{n,k}$ and denote by $u_j$ the generator of $H^{2m_j}(\mathbb{S}^{2m_j}; \mathbb{Q})$ corresponding to the fundamental class of $\mathbb S^{2m_j}$ for all $1\leq j \leq r.$ Define  
		\[
		H^*_{\mathbf{m}, \mathbb{C}G} := H^*(\mathbb{S}^{2m_1} \times \cdots \times \mathbb{S}^{2m_r} \times \mathbb{C}G_{n,k}; \mathbb{Q})
		\;\cong\; H^*_{\mathbb{C}G}[u_1,\ldots,u_r] \big/ \langle u_1^2,\ldots,u_r^2 \rangle,
		\] where $\mathbf{m} = (m_1, \ldots, m_r)$.
		Suppose $\phi: H^*_{\mathbf{m}, \mathbb{C}G} \to H^*_{\mathbf{m}, \mathbb{C}G}$ is a graded endomorphism satisfying $\phi(c_1)=\lambda c_1,\, \lambda \neq 0$. Then $$\phi(c_i) = \begin{cases}
			\lambda^i c_i,   \forall i \in I& \text{ if } k<n-k,\\
			\lambda^i c_i,  \forall i \in I \quad \text{ or } \quad(-\lambda)^i (c^{-1})_i,    \forall i \in I & \text{ if } k= n-k,
		\end{cases}$$
		where $ (c^{-1})_i $ is the $ 2i $-dimensional part of the inverse of $ c = 1 + c_1 + \cdots + c_k $ in $ H^*_{ \mathbb CG} $. 
	\end{corollary}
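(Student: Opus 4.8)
The plan is to run the argument of \thmref{main thm}(2) verbatim, replacing the two--term splitting $H^*_\times = H^*_{\mathbb{C}G}\oplus uH^*_{\mathbb{C}G}$ by the $2^r$--term splitting induced by the $r$ spheres. Write $u_S:=\prod_{j\in S}u_j$ for $S\subseteq\{1,\dots,r\}$ (so $u_\emptyset=1$); then $H^*_{\mathbf m,\mathbb{C}G}=\bigoplus_{S}u_S H^*_{\mathbb{C}G}$ as a $\mathbb Q$--module, and $u_Su_T=u_{S\cup T}$ if $S\cap T=\emptyset$ and $u_Su_T=0$ otherwise, since $u_j^2=0$. The set $J:=\langle u_1,\dots,u_r\rangle=\bigoplus_{S\neq\emptyset}u_SH^*_{\mathbb{C}G}$ is an ideal with $H^*_{\mathbf m,\mathbb{C}G}/J\cong H^*_{\mathbb{C}G}$; let $p_0$ be the quotient map and $i_0\colon H^*_{\mathbb{C}G}\hookrightarrow H^*_{\mathbf m,\mathbb{C}G}$ the inclusion. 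First I would set $\phi_1:=p_0\circ\phi\circ i_0$, a graded endomorphism of $H^*_{\mathbb{C}G}$ with $\phi_1(c_1)=\lambda c_1$, $\lambda\neq0$; by \thmref{hom and hof}(ii) it has precisely the asserted form, and in particular it is an automorphism of $H^*_{\mathbb{C}G}$. It then suffices to prove that $\phi$ maps the subalgebra $H^*_{\mathbb{C}G}$ into itself, for then $\phi(c_i)=\phi_1(c_i)$ for all $i\in I$.

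For $x\in H^*_{\mathbb{C}G}\subseteq H^*_{\mathbf m,\mathbb{C}G}$ write $\phi(x)=\phi_1(x)+\sum_{\emptyset\neq S}u_S\,\delta_S(x)$; this defines $\mathbb Q$--linear maps $\delta_S\colon H^*_{\mathbb{C}G}\to H^*_{\mathbb{C}G}$ which lower degree by $2\sum_{j\in S}m_j>0$. Comparing the $u_S$--components of $\phi(xy)=\phi(x)\phi(y)$ (and using that $u_{S_1}u_{S_2}=u_S$ exactly when $S=S_1\sqcup S_2$) gives, for every nonempty $S$,
\[
\delta_S(xy)=\phi_1(x)\,\delta_S(y)+\delta_S(x)\,\phi_1(y)+\sum_{\substack{S=S_1\sqcup S_2\\ S_1,S_2\neq\emptyset}}\delta_{S_1}(x)\,\delta_{S_2}(y).
\]
Now I would induct on $|S|$. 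When $|S|=1$ the last sum is empty, so $\delta_S$ obeys the Leibniz rule with respect to $\phi_1$; since $\phi_1$ is an automorphism, $D_S:=\delta_S\circ\phi_1^{-1}$ is a genuine $\mathbb Q$--derivation of $H^*_{\mathbb{C}G}$ lowering degree by $2\sum_{j\in S}m_j>0$, so by \eqref{cgn as hom} and \thmref{Tezuka} it vanishes, whence $\delta_S=0$. For $|S|>1$, assuming $\delta_{S'}=0$ for all nonempty $S'$ with $|S'|<|S|$, every term of the last sum is zero (each $S_i$ is a proper nonempty subset of $S$), so again $\delta_S$ is a $\phi_1$--twisted derivation, $D_S:=\delta_S\circ\phi_1^{-1}$ is a degree--lowering derivation of $H^*_{\mathbb{C}G}$, and \thmref{Tezuka} forces $\delta_S=0$. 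Hence all $\delta_S$ vanish, $\phi(H^*_{\mathbb{C}G})\subseteq H^*_{\mathbb{C}G}$, and $\phi(c_i)=\phi_1(c_i)$ has the claimed form.

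The only real difference from the single--sphere case is that the error terms $\delta_S$ are not individually derivations: the products $u_{S_1}u_{S_2}$ feed the lower--cardinality errors into the higher ones, so \thmref{Tezuka} cannot be applied in a single stroke. The induction on $|S|$ is exactly what makes the twisted Leibniz identity collapse to an honest Leibniz identity at each stage; I expect this bookkeeping to be the main (and only) obstacle, everything else — that $\phi_1$ is an automorphism, that the derivations strictly lower degree, and that \thmref{Tezuka} applies because $\mathbb CG_{n,k}=U(n)/(U(k)\times U(n-k))$ has a closed subgroup of maximal rank — being identical to the proof of \thmref{main thm}.
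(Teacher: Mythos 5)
Your proof is correct, but it takes a genuinely different route from the paper's. The paper argues by induction on $r$: it groups the factors as $\mathbb S^{2m_i}\times\hat X$ with $\hat X:=\mathbb S^{2m_1}\times\cdots\times\mathbb S^{2m_{i-1}}\times\mathbb S^{2m_{i+1}}\times\cdots\times\mathbb S^{2m_r}\times\mathbb CG_{n,k}$, reruns the two-term splitting $H^*(\hat X)\oplus u_iH^*(\hat X)$ from \thmref{main thm}, and kills the resulting degree-lowering derivation of $H^*(\hat X;\mathbb Q)$ by observing that $\hat X$ is itself a homogeneous space $G/H$ with $H$ of maximal rank (via $\mathbb S^{2m_j}=SO(2m_j+1)/SO(2m_j)$ together with \eqref{cgn as hom}), so that \thmref{Tezuka} applies to $\hat X$ directly. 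You instead keep the full $2^r$-term splitting $\bigoplus_S u_SH^*_{\mathbb CG}$ and induct on $|S|$, which turns the twisted Leibniz identity with cross-terms $\sum\delta_{S_1}(x)\delta_{S_2}(y)$ into an honest one at each stage; the only input from \thmref{Tezuka} is then for $\mathbb CG_{n,k}$ itself. What your version buys: you never need to recognize $\hat X$ as a maximal-rank homogeneous space, and the only automorphism you invoke is $\phi_1$ on $H^*_{\mathbb CG}$, which \thmref{hom and hof} hands you for free from $\phi_1(c_1)=\lambda c_1$ --- whereas the paper's inductive step, read literally, would want the induced endomorphism of $H^*(\hat X)$ to be invertible, which is not immediate from the inductive hypothesis alone. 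What the paper's version buys is brevity and the reuse of \thmref{main thm} essentially verbatim. Your bookkeeping with the partition $S=S_1\sqcup S_2$ and the vanishing of lower-cardinality terms is the right way to make the multi-sphere case rigorous without that detour.
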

	\begin{proof}
		The proof of this corollary is similar to the proof of part \textit{2} of \propref{main thm}. Apply induction on $r$ and replace $\mathbb{C}G_{n,k}$ with $\hat{X} := \mathbb{S}^{2m_1}\times\cdots\times \mathbb{S}^{2m_{i-1}}\times \mathbb{S}^{2m_{i+1}}\times\cdots\times \mathbb{S}^{2m_r}\times \mathbb{C}G_{n,k},$ and the sphere $\mathbb{S}^m$ with $\mathbb{S}^{2m_i}$ in \propref{main thm}. Since \begin{equation}\label{Sm as hom}
			\mathbb S^{2m_j}=SO(2m_j+1)/SO(2m_j)
		\end{equation}
		where the orthogonal groups $SO(2m_j+1)$ and $SO(2m_j)$ have the same rank $m_j$. Using \eqref{Sm as hom} and \eqref{cgn as hom}, $\hat{X}$ satisfies the hypothesis of \thmref{Tezuka}. Therefore, every $\mathbb{Q}$-linear derivation of $H^*(\hat{X};\mathbb{Q})$ that decreases the degree by $2m_i$ is trivial.
	\end{proof}
	Let us turn our attention to the generalized Dold spaces $P(m,n,k)$ defined in \subsecref{gds}. The following remark helps us to describe endomorphisms of $H^*(P(m,n,k);\mathbb{Q})$ induced by continuous functions on $P(m,n,k)$. These observations will be used in \secref{section 4}.
	\begin{remark}\label{lift}
		For a continuous map $f$ on $P(m,n,k)$, we have  
		\begin{equation} \label{lift of f}
			f_*\circ \pi_*\big(\pi_1(\mathbb{S}^m\times \mathbb{C}G_{n,k})\big)
			\subseteq \pi_*\big(\pi_1(\mathbb{S}^m\times \mathbb{C}G_{n,k})\big),
		\end{equation}
		where $\pi_1(X)$ denotes the fundamental group of a topological space $X$. Hence, the composite $f\circ \pi$ admits a lift $\tilde f$ on
		$\mathbb{S}^m\times \mathbb{C}G_{n,k}$ for the double covering  
		$\pi:\mathbb{S}^m\times \mathbb{C}G_{n,k}\to P(m,n,k)$.
	\end{remark}
	Using \remref{lift}, we get the following commutative diagram,
	
	\begin{equation}\label{comm diag on H}
		\begin{tikzcd}
			{H^*(P(m,n,k);\mathbb Q)}  & {H^*(\mathbb S^m\times \mathbb CG_{n,k};\mathbb Q)} \\
			{H^*(P(m,n,k);\mathbb Q)} & {H^*(\mathbb S^m\times \mathbb CG_{n,k};\mathbb Q).}
			\arrow["{\pi^*}", from=1-1, to=1-2]
			\arrow["{f^*}"', from=1-1, to=2-1]
			\arrow["{\tilde f^*}", from=1-2, to=2-2]
			\arrow["{\pi^*}", from=2-1, to=2-2]
		\end{tikzcd}
	\end{equation}
	where $\pi^*$ is an injective map. Using \thmref{cohomology of P(m,n,k)} and \eqref{comm diag on H} we obtain the following two corollaries. 
	\begin{corollary}\label{cor3}
		Let $f^*$ be an endomorphism of $H^*(P(m,n,k); \mathbb{Q})$ induced by a continuous function $f$ on $P(m,n,k)$ satisfying $f^*(c_1^2) \ne 0$. Then
		$f^*$ is the restriction of a graded endomorphism $\tilde{f}^*$ on  $H^*_\times$ satisfying $\tilde{f}^*(c_1) = \lambda c_1, \lambda \neq 0$,  to the fixed subring $\mathrm{Fix}(\theta^*)$ of $H^*_\times$ where $\theta = \alpha\times \sigma$.
	\end{corollary}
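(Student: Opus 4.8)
The plan is to transfer the statement to the double cover $\mathbb S^m\times\mathbb CG_{n,k}$ and then read it off from \thmref{main thm}. By \remref{lift}, $f\circ\pi$ lifts through the double covering $\pi$ to a map $\tilde f$ on $\mathbb S^m\times\mathbb CG_{n,k}$, which induces a graded endomorphism $\tilde f^{*}$ of $H^{*}_{\times}$ fitting into the commutative square \eqref{comm diag on H}, so that $\pi^{*}\circ f^{*}=\tilde f^{*}\circ\pi^{*}$. By \thmref{cohomology of P(m,n,k)}, $\pi^{*}$ is injective with image exactly $\mathrm{Fix}(\theta^{*})\subseteq H^{*}_{\times}$, where $\theta=\alpha\times\sigma$; hence for $y=\pi^{*}(x)\in\mathrm{Fix}(\theta^{*})=\im(\pi^{*})$ we have $\tilde f^{*}(y)=\pi^{*}(f^{*}(x))\in\im(\pi^{*})$, i.e.\ $\tilde f^{*}$ carries $\mathrm{Fix}(\theta^{*})$ into itself, and under the isomorphism $\pi^{*}\colon H^{*}(P(m,n,k);\mathbb Q)\xrightarrow{\ \cong\ }\mathrm{Fix}(\theta^{*})$ the endomorphism $f^{*}$ is identified with $\tilde f^{*}|_{\mathrm{Fix}(\theta^{*})}$. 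Thus the whole content of the corollary reduces to showing that $\tilde f^{*}(c_1)=\lambda c_1$ for some $\lambda\neq0$.

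To obtain this I would exploit the hypothesis $f^{*}(c_1^{2})\neq0$. The element $c_1^{2}=c_1\,c_1$ is one of the algebra generators of $\mathrm{Fix}(\theta^{*})\cong H^{*}(P(m,n,k);\mathbb Q)$ appearing in \thmref{cohomology of P(m,n,k)} (take $p=q=1$ in the generator $c_{2p-1}c_{2q-1}$, for both parities of $m$), and $\pi^{*}$ sends it to $c_1^{2}\in H^{*}_{\times}$. Since $\pi^{*}$ is injective and $\pi^{*}\circ f^{*}=\tilde f^{*}\circ\pi^{*}$, the hypothesis $f^{*}(c_1^{2})\neq0$ is equivalent to $(\tilde f^{*}(c_1))^{2}\neq0$ in $H^{*}_{\times}$. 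Writing $\tilde f^{*}(c_1)=v+uw$ with $v,w\in H^{*}_{\mathbb CG}$ according to the module decomposition \eqref{H in terms of u} and using $u^{2}=0$, one gets $(\tilde f^{*}(c_1))^{2}=v^{2}+2uvw$, which is nonzero only if $v\neq0$; equivalently $\tilde f^{*}(c_1)\neq\mu u$ for every $\mu\in\mathbb Q$, since every element of $uH^{*}_{\mathbb CG}$ squares to zero.

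This last statement is exactly the hypothesis of \thmref{main thm} applied to the graded endomorphism $\tilde f^{*}$ of $H^{*}_{\times}$, so part (2) of that theorem produces a nonzero rational $\lambda$ with $\tilde f^{*}(c_i)=\lambda^{i}c_i$ for all $i\in I$, or else $\tilde f^{*}(c_i)=(-\lambda)^{i}(c^{-1})_i$ for all $i\in I$ in the symmetric case $k=n-k$. Evaluating at $i=1$ and using $(c^{-1})_1=-c_1$, both alternatives collapse to $\tilde f^{*}(c_1)=\lambda c_1$ with $\lambda\neq0$, which is the assertion of the corollary. I do not expect a serious obstacle: the only points requiring care are the standard dictionary between $P(m,n,k)$ and its double cover (injectivity and image of $\pi^{*}$, commutativity of \eqref{comm diag on H}) and the bookkeeping that the class $c_1^{2}$ of $H^{*}(P(m,n,k);\mathbb Q)$ corresponds under $\pi^{*}$ to $c_1^{2}\in H^{*}_{\times}$; once these identifications are fixed, the statement is an immediate consequence of \thmref{main thm}.
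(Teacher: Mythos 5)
Your proposal is correct and follows the same route the paper intends: the lift $\tilde f$ from \remref{lift}, the commutative square \eqref{comm diag on H} together with the identification of $\im(\pi^{*})$ with $\mathrm{Fix}(\theta^{*})$ from \thmref{cohomology of P(m,n,k)}, and the observation that every element of $uH^{*}_{\mathbb CG}$ squares to zero, so $f^{*}(c_1^{2})\neq 0$ rules out $\tilde f^{*}(c_1)=\mu u$ and \thmref{main thm} then pins down $\tilde f^{*}(c_1)=\lambda c_1$ with $\lambda\neq 0$ (killing any $u$-component even when $m=2$). This is exactly the argument the paper leaves implicit, so nothing further is needed.
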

	\begin{corollary}\label{cor4}
		Let $f^*$ be an endomorphism of $H^*(P(m,n,k); \mathbb{Q})$ induced by a continuous function $f$ on $P(m,n,k)$ satisfying $f^*(c_1^2) =0$ and $n>2$. Then
		$f^*$ is the restriction of a graded endomorphism $\tilde{f}^*$ on  $H^*_\times$ satisfying $\tilde{f}^*(c_1) = au, a \in \mathbb{Q}$,  to the fixed subring $\mathrm{Fix}(\theta^*)$ of $H^*_\times$ where $\theta = \alpha\times \sigma$.
	\end{corollary}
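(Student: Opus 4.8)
The plan is to derive the statement from the three pieces of machinery already in place: \remref{lift}, the commutative square \eqref{comm diag on H}, and the identification $H^*(P(m,n,k);\mathbb{Q})\cong\mathrm{Fix}(\theta^*)$ of \thmref{cohomology of P(m,n,k)}. First I would use \remref{lift} to fix a lift $\tilde f\colon\mathbb S^m\times\mathbb CG_{n,k}\to\mathbb S^m\times\mathbb CG_{n,k}$ of $f\circ\pi$, so that $\pi\circ\tilde f=f\circ\pi$; passing to cohomology, $\tilde f^*$ is a graded endomorphism of $H^*_\times$ (the right-hand vertical map of \eqref{comm diag on H}) and $\tilde f^*\circ\pi^*=\pi^*\circ f^*$. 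Since $\pi^*$ is injective with image exactly $\mathrm{Fix}(\theta^*)$, this identity already shows both that $\tilde f^*$ maps $\mathrm{Fix}(\theta^*)=\mathrm{Im}(\pi^*)$ into itself and that, under $\pi^*$, the endomorphism $f^*$ is precisely the restriction of $\tilde f^*$ to $\mathrm{Fix}(\theta^*)$. This ``restriction'' assertion is common to \corref{cor3} and uses nothing about the hypothesis $f^*(c_1^2)=0$; the only remaining task is to prove $\tilde f^*(c_1)=au$ for some $a\in\mathbb Q$.

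Next I would transport the hypothesis. By \eqref{defn of theta*} one has $\theta^*(c_1^2)=(\theta^*c_1)^2=(-c_1)^2=c_1^2$, so $c_1^2\in\mathrm{Fix}(\theta^*)$ (it is the generator $c_{2p-1}c_{2q-1}$ of \thmref{cohomology of P(m,n,k)} with $p=q=1$), and the hypothesis $f^*(c_1^2)=0$ is meaningful. Applying $\pi^*$ to it, using commutativity of \eqref{comm diag on H} and the fact that $\tilde f^*$ is a ring homomorphism, and then using injectivity of $\pi^*$, I get $\bigl(\tilde f^*(c_1)\bigr)^2=\tilde f^*(c_1^2)=0$ in $H^*_\times$.

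Finally I would run a degree-two computation. Because $\mathbb CG_{n,k}$ has cohomology concentrated in even degrees, $H^2_\times=H^2_{\mathbb CG}\oplus u\,H^{2-m}_{\mathbb CG}$, so $\tilde f^*(c_1)=\lambda c_1+\mu u$ with $\lambda,\mu\in\mathbb Q$ and $\mu=0$ unless $m=2$. Squaring and using $u^2=0$ gives $0=\lambda^2 c_1^2+2\lambda\mu\,c_1u$; the two terms lie in the complementary summands of $H^*_\times\cong H^*_{\mathbb CG}\oplus uH^*_{\mathbb CG}$ from \eqref{H in terms of u}, hence $\lambda^2 c_1^2=0$. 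This is where $n>2$ is used: together with $k\le n-k$ it forces $n-k\ge 2$, and since $H^*_{\mathbb CG}$ has no relations up to degree $2(n-k)\ge 4$ we have $c_1^2\neq 0$, so $\lambda=0$. Therefore $\tilde f^*(c_1)=\mu u=au$ with $a:=\mu\in\mathbb Q$, which is the desired conclusion. I expect the only delicate points to be the bookkeeping of the identification $H^*(P(m,n,k);\mathbb{Q})\cong\mathrm{Fix}(\theta^*)$ — making sure ``$f^*(c_1^2)=0$'' genuinely becomes ``$(\tilde f^*c_1)^2=0$'' — and the small numerical check that $n>2$ forces $c_1^2\neq 0$; the rest is formal and parallels the proof of \corref{cor3}.
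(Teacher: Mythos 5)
Your proposal is correct and follows exactly the route the paper intends: the paper gives no written proof of this corollary, merely citing \thmref{cohomology of P(m,n,k)} and the diagram \eqref{comm diag on H}, and your argument fills in precisely those details (lifting via \remref{lift}, transporting $f^*(c_1^2)=0$ through the injection $\pi^*$ onto $\mathrm{Fix}(\theta^*)$, and the degree-two computation using $c_1^2\neq 0$ for $n>2$). No gaps.
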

	Using \propref{main thm} in \corref{cor3}, and \propref{main thm 2} in \corref{cor4} along with hypothesis \eqref{Homer}, we can determine $f^*$.
	
	Moreover, there exist graded endomorphisms of $H^*(P(m,n,k))$ that are not induced by any continuous self-map of $P(m,n,k)$, and cannot be realized as restrictions of graded endomorphisms of $H^*_{\times}$. Let us see an example of such graded endomorphism.

	\begin{example}
		If $m$ odd, $n>2$ and $k = 1$, then $P(m,n,1)$ is fibered by the complex projective space  $\mathbb{C} P^{n-1}$ over the real projective space $\mathbb{R} P^m$. In this case,  $H^*_\times\cong \mathbb Q[u,c_1]/\langle u^2, c_1^n\rangle$ and using \eqref{defn of theta*}\ and \thmref{cohomology of P(m,n,k)}, the rational cohomology ring $$H^*(P(m,n,1); \mathbb{Q}) \cong \mathbb{Q}[u, b] / \langle u^2, b^{\lfloor (n+1)/2 \rfloor} \rangle,$$ where $u$ is a generator of $H^m(\mathbb{R}P^m;\mathbb Q)$ and $b$ restricts to $c_1^2\in H^2(\mathbb CP^{n-1};\mathbb Q)$ under the fiber inclusion. 
		
		Consider the endomorphism
		\[
		\phi \colon H^*(P(m,n,1); \mathbb{Q}) \to H^*(P(m,n,1); \mathbb{Q}), \quad \text{defined by }\quad u \mapsto u,  b \mapsto -b.
		\]
		Then $\phi$ is a well-defined graded endomorphism but it cannot be a restriction of a graded endomorphism of $H^*_\times$ because any such map induces $c_1^2 \mapsto \lambda^2 c_1^2$ for some $\lambda \in \mathbb{Q}$, and $\lambda^2 \neq -1$.
	\end{example}

	The following corollary helps us to understand the relationship between the automorphisms of $H^*(P(m,n,k))$ with the automorphisms of $H^*_{\times}$.
	\begin{corollary}\label{automor}
		Let $f^*$ be an automorphism of $H^*(P(m,n,k);\mathbb{Q})$ induced by a continuous function $f$ on $P(m,n,k)$ and assume that $n> 2$. Then $\tilde{f}^*$ is an automorphism of $H^*_{\times}$, where $\tilde{f}$ is as in \remref{lift}. \\ Moreover there exist $\lambda, \mu \in \mathbb{Q}\backslash \{0\}$ such that $\tilde{f}^*(u) = \mu u$ and $\tilde{f}^*(c_i)$ is of the form given in \textit{(2)} of \propref{main thm}.
	\end{corollary}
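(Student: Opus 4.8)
The plan is to use the commutative square \eqref{comm diag on H} to transport the bijectivity of $f^*$ to constraints on $\tilde f^*$, and then to read off that $\tilde f^*$ is an automorphism from its behaviour on the two summands of $H^*_\times\cong H^*_{\mathbb CG}\oplus uH^*_{\mathbb CG}$. Since $\pi^*$ is injective with image $\mathrm{Fix}(\theta^*)$ (\thmref{cohomology of P(m,n,k)}), the identity $\tilde f^*\circ\pi^*=\pi^*\circ f^*$ identifies $f^*$ with $\tilde f^*|_{\mathrm{Fix}(\theta^*)}$; hence injectivity of $f^*$ makes $\tilde f^*$ injective on $\mathrm{Fix}(\theta^*)$, and surjectivity of $f^*$ gives $\tilde f^*\bigl(\mathrm{Fix}(\theta^*)\bigr)=\mathrm{Fix}(\theta^*)$.

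First I would place us in the hypothesis of \corref{cor3}. As $n>2$ we have $c_1^2\neq 0$ in $H^*_{\mathbb CG}$, and $\theta^*(c_1^2)=c_1^2$, so $c_1^2$ is a nonzero element of $\mathrm{Fix}(\theta^*)\cong H^*(P(m,n,k);\mathbb Q)$; injectivity of $f^*$ then gives $f^*(c_1^2)\neq 0$. By \corref{cor3}, $f^*$ is the restriction to $\mathrm{Fix}(\theta^*)$ of a graded endomorphism $\tilde f^*$ of $H^*_\times$ with $\tilde f^*(c_1)=\lambda c_1$, $\lambda\neq 0$. In particular $\tilde f^*(c_1)\neq\mu u$, so \thmref{main thm} applies and tells us that $\tilde f^*(c_i)$ has the form in part \textit{(2)} — the Adams map $c_i\mapsto\lambda^i c_i$, or, when $k=n-k$, possibly the map induced by $L\mapsto L^\perp$ — and that $\tilde f^*(u)=au$ for some $a\in\mathbb Q$ or else $\tilde f^*(u)\in H^*_{\mathbb CG}$ with $\tilde f^*(u)^2=0$. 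In both branches of part \textit{(2)}, $\tilde f^*$ maps $H^*_{\mathbb CG}$ onto itself and restricts there to an automorphism of $H^*_{\mathbb CG}$. (Note that no use of hypothesis \eqref{Homer} is needed, only \corref{cor3} and \thmref{main thm}.)

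Next I would eliminate the option $\tilde f^*(u)\in H^*_{\mathbb CG}$. If it occurred, every algebra generator $u,c_1,\dots,c_k$ would be sent into $H^*_{\mathbb CG}$, so $\mathrm{im}\,\tilde f^*\subseteq H^*_{\mathbb CG}$; but $\tilde f^*\bigl(\mathrm{Fix}(\theta^*)\bigr)=\mathrm{Fix}(\theta^*)$ and $\mathrm{Fix}(\theta^*)\not\subseteq H^*_{\mathbb CG}$, since it contains $u$ when $m$ is odd and $uc_1$ when $m$ is even, and these are nonzero elements of $H^*_\times$ not lying in $H^*_{\mathbb CG}$ — a contradiction. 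Hence $\tilde f^*(u)=au$. The same witness rules out $a=0$: for $m$ odd, $u\in\mathrm{Fix}(\theta^*)$ and injectivity of $\tilde f^*$ on $\mathrm{Fix}(\theta^*)$ forces $\tilde f^*(u)=au\neq 0$; for $m$ even, $uc_1\in\mathrm{Fix}(\theta^*)$ and $\tilde f^*(uc_1)=a\lambda\,uc_1\neq 0$, so $a\neq 0$ because $\lambda\neq 0$. Put $\mu:=a\in\mathbb Q\setminus\{0\}$.

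Finally I would conclude. With $\tilde f^*(u)=\mu u$, $\mu\neq 0$, and $\tilde f^*|_{H^*_{\mathbb CG}}$ an automorphism, $\tilde f^*$ respects the decomposition $H^*_\times\cong H^*_{\mathbb CG}\oplus uH^*_{\mathbb CG}$ of \eqref{H in terms of u}: on the second summand it acts by $ux\mapsto\mu u\,\tilde f^*(x)$, which is an isomorphism of $uH^*_{\mathbb CG}$ since $\mu\neq 0$ and $\tilde f^*|_{H^*_{\mathbb CG}}$ is invertible. Being an isomorphism on each summand, $\tilde f^*$ is an automorphism of $H^*_\times$, and the forms of $\tilde f^*(u)$ and $\tilde f^*(c_i)$ are precisely those recorded above. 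I expect the only delicate point to be the parity of $m$: because $u\notin\mathrm{Fix}(\theta^*)$ when $m$ is even, the element $uc_1$ must take over the role of $u$ as the test class inside $\mathrm{Fix}(\theta^*)$; keeping this dichotomy straight is the main (if minor) obstacle, the rest being a direct chase through \eqref{comm diag on H} together with \corref{cor3} and \thmref{main thm}.
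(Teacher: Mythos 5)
Your proposal is correct and follows essentially the same route as the paper: use $c_1^2\neq 0$ (from $n>2$) to invoke \corref{cor3}, apply \thmref{main thm}, and rule out $\tilde f^*(u)\in H^*_{\mathbb CG}$ and $\mu=0$ by observing that $u$ (for $m$ odd) or $uc_1$ (for $m$ even) lies in $\mathrm{Fix}(\theta^*)\cong\mathrm{im}\,f^*$. Your explicit final check that $\tilde f^*$ is bijective on each summand of $H^*_{\mathbb CG}\oplus uH^*_{\mathbb CG}$ is a welcome elaboration of a step the paper leaves implicit.
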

	\begin{proof}
		Using \remref{lift}, we have $\tilde{f}^*$ is a graded endomorphism of $H^*_{\times}$. When $n>2$, we have $c_1^2\neq 0$ in $\fix (\theta^*)\subseteq H^*_\times$, where Fix$(\theta^*)$ is the fixed subring under $\theta^*$ defined in \eqref{defn of theta*}. Since $f^*$ is an automorphism, we have $f^*(c_1^2) \neq 0$. Using \corref{cor3}, there exist $\lambda \in \mathbb{Q}$ such that $\tilde{f}^*(c_1) = \lambda c_1, \lambda \neq 0$.\\
		By \propref{main thm}, $\tilde{f}^*(c_i)$ is of the form given in \textit{(2)} of \propref{main thm}. Also, $$\tilde{f}^*(u) = \mu u, \, \mu \in \mathbb{Q} \quad \text{ or } \quad \tilde{f}^*(u) = Q$$ where $Q$ is a polynomial of degree $m$ in $H^*_{\mathbb{C}G}$ with $Q^2 =0$. To conclude the result, we need to prove that $\tilde{f}^*(u) = \mu u$ where $\mu \neq 0$. \\
		Suppose that $\tilde{f}^*(u) = Q$, then the image set $\im \tilde{f}^*\subseteq H^*_{\mathbb{C}G}$. Using \corref{cor3}, we get $$\im f^*  \cong \im \tilde{f^*}|_{Fix (\theta^*)}\subseteq H^*_{\mathbb{C}G}.$$ This is a contradiction to the assumption that $f^*$ is an automorphism because using \thmref{cohomology of P(m,n,k)}, either $u$ or $uc_{1}$ (depending on the parity of $m$) is in $\im f^*=\fix (\theta^*) \cong H^*(P(m,n,k);\mathbb{Q})$. Therefore, $\tilde{f}^*(u) = \mu u, \mu \in \mathbb{Q}$ and $\mu \neq 0$ because $f^*$ is an automorphism.
	\end{proof}
	\subsection{} The following theorem shows that if a continuous map on $\mathbb S^m\times \mathbb CG_{n,k}$ stabilizes a copy of Grassmannian, the induced map on cohomology stabilizes the cohomology subalgebra $H^*(\mathbb S^m;\mathbb Q)$.

	\begin{theorem}\label{ind from top}
		Let $f$ be a continuous map on $\mathbb S^m\times \mathbb CG_{n,k}$ such that it stabilizes a copy of Grassmannian $\{{x_0}\}\times \mathbb CG_{n,k}$ for some $x_0\in \mathbb S^m.$ Then the induced endomorphism in cohomology satisfies $f^*(u) = \mu u$ for some $\mu \in \mathbb Z$.
	\end{theorem}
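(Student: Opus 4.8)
The plan is to deduce this from naturality of cohomology under the inclusion of the stabilized Grassmannian slice. When $m$ is odd there is nothing to prove, since $H^m(\mathbb CG_{n,k};\mathbb Z)=0$ forces $H^m(\mathbb S^m\times\mathbb CG_{n,k};\mathbb Z)=\mathbb Z u$ by the K\"unneth theorem; so the substance is the case $m$ even. In general, let $j\colon\{x_0\}\times\mathbb CG_{n,k}\hookrightarrow\mathbb S^m\times\mathbb CG_{n,k}$ denote the inclusion and let $g:=f|_{\{x_0\}\times\mathbb CG_{n,k}}$, regarded via the obvious identification as a self-map of $\mathbb CG_{n,k}$. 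The hypothesis that $f$ stabilizes this slice says precisely that $f\circ j=j\circ g$, and hence $j^*\circ f^*=g^*\circ j^*$ on integral cohomology.

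Next I would record two elementary facts. First, $j^*(u)=0$: writing $u=p_1^*(v)$, where $p_1\colon\mathbb S^m\times\mathbb CG_{n,k}\to\mathbb S^m$ is the projection and $v$ generates $H^m(\mathbb S^m;\mathbb Z)$, the composite $p_1\circ j$ is the constant map at $x_0$ and therefore factors through the one-point space; since $m>0$ we get $j^*(u)=(p_1\circ j)^*(v)=0$. Second, by the K\"unneth theorem and the torsion-freeness of $H^*(\mathbb CG_{n,k};\mathbb Z)$, in degree $m$ there is a splitting $H^m(\mathbb S^m\times\mathbb CG_{n,k};\mathbb Z)\cong H^m(\mathbb CG_{n,k};\mathbb Z)\oplus\mathbb Z u$, and the restriction $j^*$ is the identity on the first summand --- each Chern class $c_i$, being pulled back from the Grassmannian factor, restricts to itself along $j$. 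Consequently $f^*(u)$ has a unique expression $f^*(u)=P+\mu u$ with $P\in H^m(\mathbb CG_{n,k};\mathbb Z)$ and $\mu\in\mathbb Z$, and $j^*(f^*(u))=P$.

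Combining these, $P=j^*(f^*(u))=g^*(j^*(u))=g^*(0)=0$, so $f^*(u)=\mu u$ with $\mu\in\mathbb Z$, as claimed. I do not anticipate a serious obstacle here: the argument is just the naturality square above together with the degree-$m$ K\"unneth bookkeeping. The one genuinely load-bearing point is that the hypothesis is used exactly to make $p_1\circ f\circ j$ constant; without the assumption that the image of the slice lies in a single slice, $f^*(u)$ may acquire a nonzero component in $H^m(\mathbb CG_{n,k};\mathbb Z)$, so the conclusion would fail. I would also note that the argument is unchanged if $f$ merely carries $\{x_0\}\times\mathbb CG_{n,k}$ into a possibly different slice, since $u$ restricts to zero along that slice as well.
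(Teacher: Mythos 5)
Your argument is correct, and it is genuinely different from --- and considerably more economical than --- the proof in the paper. The paper proves this by a torus trick: it takes the quotient map $q\colon\mathbb T^m\to\mathbb S^m$ collapsing the complement of a disk to $x_0$, uses the stabilization hypothesis to build a lift $\tilde f$ of $f$ to $\mathbb T^m\times\mathbb CG_{n,k}$ covering $f$ through $q\times\mathrm{id}$, and then argues that since $H^1(\mathbb T^m\times\mathbb CG_{n,k};\mathbb Z)$ is spanned by the degree-one torus classes $u_i$, the image $\tilde f^*(u_1\cdots u_m)$ has no component coming from the Grassmannian, whence neither does $f^*(u)$ because $(q\times\mathrm{id})^*$ is injective on the summand $H^0(\mathbb S^m)\otimes H^m(\mathbb CG_{n,k})$. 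You instead restrict along the inclusion $j$ of the stabilized slice: the identity $f\circ j=j\circ g$ gives $j^*\circ f^*=g^*\circ j^*$, and since $j^*$ kills $u$ but is the identity on the Künneth summand $H^m(\mathbb CG_{n,k};\mathbb Z)$, the Grassmannian component of $f^*(u)$ is forced to vanish. Both proofs use the hypothesis in the same essential way (to make $p_1\circ f\circ j$ constant), but your route avoids the construction of $\tilde f$ altogether --- a construction which in the paper's form requires some care, since the formula $s\circ p_1\circ f(q(x),y)$ on the disk $D$ presupposes that $p_1\circ f(q(x),y)$ avoids $x_0$. Your closing observations are also sound: the case $m$ odd is vacuous because $H^m(\mathbb CG_{n,k};\mathbb Z)=0$, and the argument persists verbatim if $f$ carries the slice into a different slice $\{x_1\}\times\mathbb CG_{n,k}$, since $u$ restricts to zero there as well.
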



	\begin{proof}
		Let $\mathbb T^m$ be the torus $(\mathbb S^1)^m$ and
		$q:\mathbb T^m\to \mathbb S^m$ be the quotient map that collapses the complement $C$ of an open disk $D\subset \mathbb T^m$ to the point $x_0$ in $\mathbb{S}^m$.  Denote $p_i$ the $i$-th projection map on $\mathbb S^m\times \mathbb CG_{n,k}$ for $i=1,2$ and $s:\mathbb S^m\setminus\{x_0\}\to D$ is the inverse of the restriction $q|_D$.
		Since $f$ stabilizes $\{x_0\}\times \mathbb{C}G_{n,k}$, define continuous maps
		$g:\mathbb{C}G_{n,k}\to \mathbb{C}G_{n,k}$ by $(x_0,g(y)) = f(x_0,y)$ and $\tilde f:\mathbb T^m\times \mathbb{C}G_{n,k}\to \mathbb T^m\times \mathbb{C}G_{n,k}$ by
		\[
		\tilde f(x,y)=
		\begin{cases}
			\big(s\circ p_1\circ f(q(x),y),\,\,p_2\circ f(q(x),y)\big), & x\in D,\\[2pt]
			\big(x,g(y)\big), & x\in C.
		\end{cases}
		\]
		Then it is easy to check that the following diagram commutes:
		\[ \begin{tikzcd}
			{\mathbb T^m\times \mathbb{C}G_{n,k}} & {\mathbb T^m\times \mathbb{C}G_{n,k}} \\
			{\mathbb S^m\times \mathbb{C}G_{n,k}} & {\mathbb S^m\times \mathbb{C}G_{n,k}}
			\arrow["{\tilde f}", from=1-1, to=1-2]
			\arrow["{q\times \mathrm{id}}"', two heads, from=1-1, to=2-1]
			\arrow["{q\times \mathrm{id}}", two heads, from=1-2, to=2-2]
			\arrow["f", from=2-1, to=2-2]
		\end{tikzcd}
		\]
		
		Since, the quotient map $q$ has Brouwer degree 1, the induced map on rational cohomology $q^*: H^*(\mathbb{S}^m; \mathbb{Z}) \rightarrow H^*(\mathbb{T}^m;\mathbb{Z})$ sends $u\mapsto 1\cdot u_1 u_2 \dots u_m$ where $u_i$ denote the one dimensional cohomology class corresponding to the fundamental class of the $i$-th circle factor of $\mathbb T^m$ for $i\in \{1,2,\ldots,m\}$ with appropriate orientation. 
		Since $H^{\mathrm{odd}}(\mathbb{C}G_{n,k};\mathbb Z)=0$, the induced map $\tilde f^*$ sends each $u_i$ to a polynomial $P_i(u_1,\dots,u_m)$.  We slightly abuse notation by using the same symbols for the cohomology classes of $H^*(\mathbb S^m;\mathbb Z)$ and $H^*(\mathbb{C}G_{n,k};\mathbb Z)$ when viewed in $H^*(\mathbb S^m\times \mathbb CG_{n,k};\mathbb Z)$.
		The induced diagram in cohomology implies the following commutative diagram.
		\[
		\begin{tikzcd}
			{\prod_{i=1}^m u_i} & {\prod_{i=1}^m P_i(u_1,\ldots,u_m)} \\
			u & {f^*(u)}
			\arrow["{\tilde f^*}", maps to, from=1-1, to=1-2]
			\arrow["{(q\times \mathrm{id})^*}"', maps to, from=2-1, to=1-1]
			\arrow["{f^*}", maps to, from=2-1, to=2-2]
			\arrow["{(q\times \mathrm{id})^*}"', maps to, from=2-2, to=1-2]
		\end{tikzcd}
		\]
		This implies that $f^*(u)$ does not contain any nonzero element from $H^*(\mathbb{C}G_{n,k};\mathbb Z)$.  
		Thus, $f^*(u)=\mu u$ for some $\mu\in \mathbb Z$.
	\end{proof}

	\section{Coincidence theory of $P(m,n,k)$} \label{section 4}
	In this section, we study the \textit{coincidence theory} of generalized Dold spaces $P(m,n,k)$ defined in \subsecref{gds}. We establish the necessary conditions for a generalized Dold space $P(S,X)$ defined in \eqref{gen dold space} to satisfy the coincidence property. 
	
	\subsection{} Let us recall certain definitions that will be required in the rest of this section.
	
	\begin{definition}
		Let $(X,g)$ be a pair, where $g$ is a continuous map on a topological space $X$. The pair $(X,g)$ is said to have the \textbf{coincidence property} (in short, CP) if, for every continuous map $f : X \to X$, there exists a point $x \in X$ such that $f(x) = g(x)$.
	\end{definition}
	
	If we consider $g$ to be the identity map on $X$, then the notion of coincidence reduces to that of a fixed point, resulting in the following definition.

	\begin{definition}
		A topological space $X$ is said to have \textbf{fixed-point property} (FPP) if every continuous map $f : X \to X$ admits a fixed-point; that is, there exists $x \in X$ such that $f(x) = x$.
	\end{definition}

	The following proposition provides a criteria in terms of the fiber $X$ and the base space $Y := S/\!\!\sim_\alpha$, allowing one to infer the coincidence properties of the total space $P(S,X)$.

	\begin{proposition}\label{necessary condition}
		Let $(P(S,X),g)$ be a pair, where $g$ is a continuous map on the generalized Dold space $P(S,X)$. Then $(P(S,X),g)$ does not have the CP if one of the following hold:
		\begin{enumerate}
			\item The continuous map $g$ is a fiber bundle map and the pair $(Y,p \circ g \circ s)$ does not have the CP, 
			where $Y = S/\!\!\sim_\alpha$ and $s$ denotes a section of the $X$-bundle projection $p$ defined in \eqref{sectio} and \eqref{proj}.
			
			\item  
			There exists a $\sigma$-equivariant map $f$ (i.e. $f\circ\sigma =\sigma\circ f$) on $X$ and a $\alpha \times \sigma$-equivariant map $\tilde g$ on $S\times X$ inducing $g$ such that
			$\mathrm{id}_S \times f$ coincides with neither $\tilde{g}$ nor
			$(\alpha \times \sigma) \circ \tilde{g}$.
		\end{enumerate}

	\end{proposition}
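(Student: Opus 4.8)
The plan is to prove the contrapositive in each case directly: assuming the stated hypothesis, I would exhibit an explicit continuous self-map of $P(S,X)$ that has no coincidence point with $g$, thereby witnessing the failure of CP. In both cases the candidate map is built in the obvious functorial way, and the point of the proof is to chase a hypothetical coincidence point through the relevant commutative square until it contradicts the hypothesis.

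For part (1): since $(Y,p\circ g\circ s)$ does not have CP, choose a continuous map $\psi\colon Y\to Y$ with $\psi(y)\neq (p\circ g\circ s)(y)$ for every $y\in Y$, and set $h:=s\circ\psi\circ p$, a continuous self-map of $P(S,X)$. Because $g$ is a fiber bundle map it covers a continuous map $\bar g\colon Y\to Y$ with $p\circ g=\bar g\circ p$; evaluating on the section $s$ (and using $p\circ s=\mathrm{id}_Y$) gives the identification $\bar g=p\circ g\circ s$. Now if $z\in P(S,X)$ were a coincidence point of $g$ and $h$, then $g(z)=h(z)=s\big(\psi(p(z))\big)$; applying $p$ yields $\bar g(p(z))=\psi(p(z))$, i.e. $(p\circ g\circ s)(p(z))=\psi(p(z))$, contradicting the choice of $\psi$. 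Hence $g$ and $h$ have no coincidence point and $(P(S,X),g)$ fails CP.

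For part (2): let $f\colon X\to X$ be $\sigma$-equivariant and let $\tilde g\colon S\times X\to S\times X$ be the given $\alpha\times\sigma$-equivariant lift of $g$, so that $\pi\circ\tilde g=g\circ\pi$ for the double cover $\pi\colon S\times X\to P(S,X)$. Since $f\circ\sigma=\sigma\circ f$, the map $\mathrm{id}_S\times f$ is $\alpha\times\sigma$-equivariant, hence descends to a continuous self-map $h\colon P(S,X)\to P(S,X)$, $[s,x]\mapsto[s,f(x)]$, satisfying $\pi\circ(\mathrm{id}_S\times f)=h\circ\pi$. If $[s_0,x_0]$ were a coincidence point of $h$ and $g$, then $\pi\big((\mathrm{id}_S\times f)(s_0,x_0)\big)=\pi\big(\tilde g(s_0,x_0)\big)$; since the fibers of $\pi$ are precisely the orbits of $\alpha\times\sigma$, we get either $(\mathrm{id}_S\times f)(s_0,x_0)=\tilde g(s_0,x_0)$ or $(\mathrm{id}_S\times f)(s_0,x_0)=(\alpha\times\sigma)\big(\tilde g(s_0,x_0)\big)$. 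In either case $\mathrm{id}_S\times f$ has a coincidence point with $\tilde g$ or with $(\alpha\times\sigma)\circ\tilde g$, contradicting the hypothesis; so $g$ and $h$ have no coincidence point.

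The routine verifications are the well-definedness and continuity of $h$ in each case (immediate from the equivariance identities) and the identification $\bar g=p\circ g\circ s$ from the fiber bundle map condition. The only genuinely delicate point is the two-element fiber of $\pi$ in part (2): a coincidence downstairs only forces $\mathrm{id}_S\times f$ to agree with \emph{one} of the two admissible lifts $\tilde g$ and $(\alpha\times\sigma)\circ\tilde g$ of $g$ at the relevant point, which is exactly why the hypothesis must exclude both.
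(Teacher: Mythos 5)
Your proposal is correct and follows essentially the same route as the paper: in part (1) you build $s\circ\psi\circp$ — i.e.\ the map $s\circ\psi\circ p$ — and use $p\circ g\circ s=\bar g$ exactly as the paper does with its $g_1$, and in part (2) you spell out the paper's "general observation" that a coincidence downstairs forces agreement with one of the two lifts $\tilde g$ or $(\alpha\times\sigma)\circ\tilde g$. The only difference is that your part (2) is more explicit than the paper's one-line appeal to equivariance, which is a virtue rather than a deviation.
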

	\begin{proof} \textit{(1)}
		Suppose that the pair $(Y,p \circ g \circ s)$ does not have the CP. 
		Then there exists a continuous map $f : Y \to Y$ such that \begin{equation} \label{pogos}
			f(x) \neq p \circ g \circ s(x), \, \forall x \in Y.
		\end{equation}
		We are given that $g$ is a fiber bundle map, which implies that there exist  $g_1:Y\to Y$, satisfying  $p \circ g = g_1 \circ p.$
		Consider $p\circ g\circ s=g_1\circ p\circ s=g_1 $.
		Thus, $p \circ g = g_1 \circ p$ implies
		\[
		p \circ g(x) = p \circ g \circ s \circ p(x),\, \forall x \in P(S, X).
		\]
		Define the map $\phi := s \circ f \circ p$ on $P(S, X)$. We claim that $\phi(y)\neq g(y), \, \forall y \in P(S,X)$. \\ Suppose there exist $y \in P(S, X)$ such that $\phi(y) = g(y)$, then 
		\[
		p \circ g \circ s(p(y)) = p \circ g(y)
		= p \circ s \circ f \circ p (y) 
		= f(p(y)),
		\]
		which contradicts \eqref{pogos}.

		\textit{(2)} 
		Let $G$ denote the group of deck transformations of the double covering $\pi : S \times X \to P(S, X)$, generated by the free involution $\alpha \times \sigma.$
		The proof then follows from a general observation that if for two $G$-equivariant maps $\tilde\phi, \tilde\psi$ on $S\times X$, the maps $\tilde \phi$ and $t \cdot \tilde\psi$ have no point of coincidence, for any $t \in G$; then the maps they induce on the orbit space $P(S,X)$, namely $\phi, \psi$, are also coincidence-free.
	\end{proof}

	In particular if we take $g$ to be the identity map in \propref{necessary condition}, we recover Proposition~7.2.1 of \cite{mandal}, which proves that if the base $Y$ does not have the FPP, or there exist a $\sigma$-equivarinat map on the fibre $X$ with no fixed point then $P(S,X)$ does not have the FPP. As a consequence, $P(m,n)$ does not have the FPP if either $m$ or $n$ is odd.

	\subsection{} Let us recall a well known result in coincidence theory, the Lefschetz Coincidence Theorem, which will be used to prove results in the rest of this paper.\\
	
	For a closed oriented manifold $M$ of dimension $n$, let $[M]\in H^n(M;\mathbb Q)$ denote a chosen fundamental class. Then we have the Poincar\'e  duality isomorphism $D_M:H^k(M;\mathbb Q)\to H_{n-k}(M;\mathbb Q)$, defined by 
	\begin{equation}\label{poinc dua}
		D_M(\alpha)=[M]\frown \alpha, \, \forall \alpha\in H^k(M;\mathbb Z).
	\end{equation}

	\begin{theorem}[Lefschetz Coincidence Theorem]\label{LCT}
		Let $f,g$ be two continuous maps on a compact, connected, oriented manifold $M$ of  dimension $n$. The Lefschetz coincidence number is defined as 
		$$L(f,g):= \sum_{i=0}^{n} (-1)^i \mathrm{tr}\big(D_M \circ g^* \circ D_M^{-1} \circ f_* \; : \; H_i(M;\mathbb{Q}) \longrightarrow H_i(M;\mathbb{Q})\big).
		$$ If $L(f,g) \neq 0$, then there exists $x \in M$ such that $f(x) = g(x)$.
	\end{theorem}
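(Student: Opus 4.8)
The plan is to argue by contraposition: assuming that $f$ and $g$ have no coincidence point, I would show $L(f,g)=0$. Consider the map $\Phi=(f,g)\colon M\to M\times M$ given by $x\mapsto(f(x),g(x))$. A point $x\in M$ is a coincidence point of $f$ and $g$ exactly when $\Phi(x)$ lies on the diagonal $\Delta=\{(y,y):y\in M\}\subseteq M\times M$; hence the hypothesis says $\Phi(M)\cap\Delta=\emptyset$, so $\Phi$ factors through the open complement $M\times M\setminus\Delta$.

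Since $M$ is a closed oriented $n$-manifold, $\Delta$ is a closed oriented submanifold of the closed oriented $2n$-manifold $M\times M$ with normal bundle isomorphic to $TM$, so it carries a Poincar\'e dual $u_\Delta\in H^{n}(M\times M;\mathbb{Q})$, namely the image of the Thom class of a regular neighborhood of $\Delta$ under $H^{n}(M\times M,\,M\times M\setminus\Delta;\mathbb{Q})\to H^{n}(M\times M;\mathbb{Q})$. Because $\Phi$ factors through $M\times M\setminus\Delta$, naturality of the pullback together with exactness of the long exact sequence of the pair $(M\times M,\,M\times M\setminus\Delta)$ forces $\Phi^{*}(u_\Delta)=0$ in $H^{n}(M;\mathbb{Q})$, and in particular $\langle\Phi^{*}(u_\Delta),[M]\rangle=0$.

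It then remains to identify $\langle\Phi^{*}(u_\Delta),[M]\rangle$ with $L(f,g)$ up to sign. Via the K\"unneth isomorphism $H^{*}(M\times M;\mathbb{Q})\cong H^{*}(M;\mathbb{Q})\otimes H^{*}(M;\mathbb{Q})$, choose a homogeneous $\mathbb{Q}$-basis $\{e_\lambda\}$ of $H^{*}(M;\mathbb{Q})$ and let $\{e_\lambda^{\#}\}$ be the Poincar\'e-dual basis, determined by $\langle e_\lambda\smile e_\mu^{\#},[M]\rangle=\delta_{\lambda\mu}$. The classical formula for the class of the diagonal reads $u_\Delta=\sum_\lambda(-1)^{\deg e_\lambda}\,e_\lambda\times e_\lambda^{\#}$, whence
\[
\langle\Phi^{*}(u_\Delta),[M]\rangle=\sum_\lambda(-1)^{\deg e_\lambda}\,\big\langle f^{*}(e_\lambda)\smile g^{*}(e_\lambda^{\#}),[M]\big\rangle .
\]
Rewriting the cup products and evaluations in terms of the cap product with $[M]$, the duality isomorphism $D_M$, and the Kronecker pairing, a direct computation shows that the right-hand side equals $\sum_i(-1)^i\,\mathrm{tr}\big(D_M\circ g^{*}\circ D_M^{-1}\circ f_{*}:H_i(M;\mathbb{Q})\to H_i(M;\mathbb{Q})\big)=L(f,g)$, up to an overall sign depending only on $n$ and the chosen conventions. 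Combined with $\langle\Phi^{*}(u_\Delta),[M]\rangle=0$ this gives $L(f,g)=0$, establishing the contrapositive. The topology here (that $\Phi$ factors through the complement of $\Delta$, forcing $\Phi^{*}u_\Delta=0$) is immediate; the main obstacle is this final linear-algebra step, where one must track the Koszul signs in the K\"unneth map, the sign in the dual-of-the-diagonal formula, the graded commutativity of the cup product, and the compatibility of $\{e_\lambda^{\#}\}$ with $D_M$, so that the cohomological intersection number matches the homological trace formula exactly as stated.
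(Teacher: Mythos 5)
The paper does not prove this statement at all --- it is recalled as a classical result (``Let us recall a well known result in coincidence theory\dots'') and used as a black box, so there is no in-paper proof to compare against. Your argument is the standard textbook proof (as in Vick's \emph{Homology Theory} or Milnor--Stasheff's treatment of the diagonal class), and its structure is sound: the contrapositive reduction, the factorization of $\Phi=(f,g)$ through $M\times M\setminus\Delta$ forcing $\Phi^{*}(u_\Delta)=0$ via the long exact sequence of the pair, the diagonal formula $u_\Delta=\sum_\lambda(-1)^{\deg e_\lambda}e_\lambda\times e_\lambda^{\#}$, and the identity $\Phi^{*}(a\times b)=f^{*}(a)\smile g^{*}(b)$ are all correct. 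The one place where you defer the work is the final identification of $\sum_\lambda(-1)^{\deg e_\lambda}\langle f^{*}(e_\lambda)\smile g^{*}(e_\lambda^{\#}),[M]\rangle$ with $\sum_i(-1)^i\mathrm{tr}\bigl(D_M\circ g^{*}\circ D_M^{-1}\circ f_{*}\bigr)$; this is genuinely the computational heart of the theorem (one expands $f^{*}e_\lambda$ in the basis $\{e_\mu\}$ and $g^{*}e_\lambda^{\#}$ in the dual basis $\{e_\mu^{\#}\}$, uses the pairing $\langle e_\mu\smile e_\nu^{\#},[M]\rangle=\delta_{\mu\nu}$ to extract a sum of products of matrix entries, and then converts $f^{*}$ to $f_{*}$ by Kronecker adjunction and $e_\lambda^{\#}$ to $D_M^{-1}$), but your observation that any residual discrepancy is a single global sign depending only on $n$ and the conventions is correct and suffices, since the theorem only asserts that nonvanishing of $L(f,g)$ forces a coincidence. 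So the proposal is a correct proof of the cited classical theorem, with one standard computation left as a sketch.
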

	When  $g=\operatorname{id}_M$, the theorem reduces to the Lefschetz Fixed-Point Theorem for $M$.\\
	
	To study the coincidence theory of generalized Dold spaces fibred by complex Grassmannians over real projective spaces, it is helpful to first understand the coincidence theory of complex Grassmannians, a topic of independent interest. We now prove the following lemma (cf. Theorem 2, \cite{glover-homer}) to prove \propref{CP of CGnk}.
	\begin{lemma}\label{sum neq 0}
		Let $d_{2i}$ be the $2i$-th Betti number of a complex Grassmannian $\mathbb CG_{n,k}$ with $d=k(n-k)$ even. Then the sum \(\sum _{i=0}^d d_{2i}\lambda^i\neq 0,\, \forall \lambda \in \mathbb Q.\)
	\end{lemma}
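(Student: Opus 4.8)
The plan is to recognize $\sum_{i=0}^{d} d_{2i}\lambda^{i}$ as the Poincar\'e polynomial of $\mathbb{C}G_{n,k}$ evaluated at $\lambda$ and to exploit its factorization as a Gaussian binomial coefficient. The Schubert cell decomposition of $\mathbb{C}G_{n,k}$ has one cell of real dimension $2|\mu|$ for each partition $\mu$ contained in the $k\times(n-k)$ rectangle, so
\[
P(q):=\sum_{i=0}^{d} d_{2i}\,q^{i}=\binom{n}{k}_{q}=\prod_{i=1}^{k}\frac{q^{\,n-k+i}-1}{q^{\,i}-1},
\]
a polynomial with $P(0)=d_{0}=1$ and all $d_{2i}\ge 0$. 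I will in fact show that $P(\lambda)\ne 0$ for every real $\lambda$, which certainly gives the lemma, splitting into the cases $\lambda\ge 0$, $\lambda<0$ with $\lambda\ne-1$, and $\lambda=-1$.

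For $\lambda\ge 0$ there is nothing to prove, since $P(\lambda)\ge d_{0}=1>0$. For $\lambda<0$ with $\lambda\ne-1$, set $t:=-\lambda>0$, so $t\ne 1$; then for every $m\ge 1$ the number $(-t)^{m}-1=(-1)^{m}t^{m}-1$ is nonzero, because $t^{m}\ne1$ forces $(-1)^{m}t^{m}$ to be either a positive real different from $1$ or a negative real. Hence at $q=-t$ every factor in both the numerator and the denominator of the product expansion of $\binom{n}{k}_{q}$ is nonzero, so $P(-t)\ne0$.

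The case $\lambda=-1$ is the only delicate one, and it is precisely here that the hypothesis that $d=k(n-k)$ is even must be used; note that the product expansion degenerates to $0/0$ at $q=-1$, so no direct substitution is available. I will instead compute the multiplicity of $q+1=\Phi_{2}(q)$ in $\binom{n}{k}_{q}$, and then use that, over the field $\mathbb{Q}$, $q+1$ divides a polynomial $P$ iff $P(-1)=0$. From $q^{m}-1=\prod_{e\mid m}\Phi_{e}(q)$, the factor $q+1$ divides $q^{m}-1$ exactly when $m$ is even, and then with multiplicity $1$; since $\binom{n}{k}_{q}\in\mathbb{Q}[q]$ and $\mathbb{Q}[q]$ is a UFD, the multiplicity of $q+1$ in $\binom{n}{k}_{q}$ equals
\[
\#\{\,m:\ n-k<m\le n,\ m\ \text{even}\,\}-\left\lfloor \tfrac{k}{2}\right\rfloor
=\left\lfloor \tfrac{n}{2}\right\rfloor-\left\lfloor \tfrac{n-k}{2}\right\rfloor-\left\lfloor \tfrac{k}{2}\right\rfloor .
\]
A short check over the four parity classes of $(n,k)$ shows this quantity equals $1$ when $n$ is even and $k$ is odd, and $0$ otherwise; and the condition that $n$ is even and $k$ is odd is exactly the condition that $k(n-k)$ is odd. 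Since $k(n-k)$ is assumed even, $q+1\nmid\binom{n}{k}_{q}$, hence $P(-1)\ne0$. (Equivalently, one may cite the classical evaluation $\binom{n}{k}_{-1}=\binom{\lfloor n/2\rfloor}{\lfloor k/2\rfloor}$, valid whenever $k$ is even or $n$ is odd.)

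Combining the three cases gives $P(\lambda)\ne 0$ for all real, hence all rational, $\lambda$. The main obstacle is the single value $\lambda=-1$: the other two cases rest only on positivity of the Betti numbers and on the fact that $x^{m}=1$ has no positive real root besides $x=1$, but at $\lambda=-1$ one genuinely needs the evenness of $k(n-k)$, for which the cleanest route seems to be the $\Phi_{2}$-divisibility count above.
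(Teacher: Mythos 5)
Your proof is correct, and it takes a genuinely different route from the paper. The paper splits on whether $\lambda$ is an integer: for integral $\lambda\neq\pm1$ it uses the congruence $\sum d_{2i}\lambda^{i}\equiv d_0=1\pmod{\lambda}$, for non-integral rational $\lambda$ it uses the rational root theorem (exploiting $d_0=d_d=1$), and for the delicate value $\lambda=-1$ it identifies the sum with the Euler characteristic $\chi(\mathbb{R}G_{n,k})$ via $\dim H^{2i}(\mathbb{C}G_{n,k};\mathbb{Q})=\dim H^{i}(\mathbb{R}G_{n,k};\mathbb{Z}_2)$ and cites the known nonvanishing of that Euler characteristic when $k(n-k)$ is even. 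You instead use the Gaussian binomial factorization of the Poincar\'e polynomial: this disposes of every real $\lambda\neq-1$ in one stroke (each linear-in-$q^{m}$ factor is visibly nonzero off $q=\pm1$), and settles $\lambda=-1$ by counting the multiplicity of $\Phi_2(q)=q+1$ in numerator and denominator, which your parity check correctly shows is $1$ exactly when $k(n-k)$ is odd and $0$ otherwise. Your argument is purely algebraic and self-contained (no appeal to the topology of the real Grassmannian) and in fact proves the stronger statement that the sum is nonzero for every \emph{real} $\lambda$; the paper's argument is shorter at the cost of importing the fact $\chi(\mathbb{R}G_{n,k})\neq0$. Both approaches isolate $\lambda=-1$ as the only place the evenness hypothesis is needed.
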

	\begin{proof}
		Let us consider the sum $\sum_{i=0}^d d_{2i}\lambda^i$, when $\lambda$ is an integer. 
		Clearly, $$\sum_{i=0}^d d_{2i}\lambda^i \equiv 1 \pmod{\lambda}.$$ 
		Hence, $\sum_{i=0}^d d_{2i}\lambda^i \neq 0$, if $\lambda \neq \pm 1$. When $\lambda = 1$, the sum is also positive and therefore nonzero. It remains to consider the case where $\lambda = -1$.
		Let $\chi(\mathbb{R}G_{n,k})$ denote the Euler-Poincaré characteristic of $\mathbb{R}G_{n,k}$ and be defined by $$\chi(X):=\sum_{i\ge0}\dim H^i(\mathbb{R}G_{n,k};\mathbb{Z}_2)$$ where $\mathbb{R}G_{n,k}$ denotes the Grassmannian of real $k$-planes in $\mathbb{R}^n$.
		Now we observe that 
		$\sum_{i=0}^d d_{2i}(-1)^i = \chi(\mathbb{R}G_{n,k})$ where $d_{2i} = \operatorname{dim} H^{2i}(\mathbb{C}G_{n,k}; \mathbb{Q}) = \dim H^{i}(\mathbb{R}G_{n,k}; \mathbb{Z}_2)$. 
		It is a well known fact that $\chi(\mathbb{R}G_{n,k}) \neq 0$ if $k(n-k)$ is even. 
		
		Let us move to the other case where $\lambda\in \mathbb{Q}\backslash \mathbb{Z}$. Suppose $\sum_{i=0}^{d} d_{2i}\lambda^i =0$ for some $\lambda = \frac{p}{q}$ where $p$ and $q$ are coprime integers. Since $d_0 =d_d =1$, using the rational root theorem $p|1$ and $q|1$. Hence, $\lambda = \pm 1$, which is a contradiction. Therefore, we conclude that $\sum_{i=0}^{d} d_{2i}\lambda^i \neq 0$ for all $\lambda\in \mathbb Q$. 
	\end{proof}

	Denote the $i$-th homology groups 
	$H_i(\mathbb{C}G_{n,k}; \mathbb{Q}), \, H_i(\mathbb{S}^m; \mathbb{Q})$ and 
	$H_i(\mathbb{S}^m \times \mathbb{C}G_{n,k}; \mathbb{Q})$, by $H_i^{\mathbb{C}G}, H_i^{\mathbb{S}}$ and $H_i^{\times}$, respectively. Let $d$ denote the complex dimension of $\mathbb CG_{n,k}$, given by $d = k(n - k)$. Then we have the following proposition.
	\begin{proposition}\label{CP of CGnk}
		Consider a complex Grassmannian $\mathbb{C}G_{n,k}$ such that the hypothesis \eqref{Homer} is satisfied and $k(n-k)$ is even. Let $g $ be a continuous map on $\mathbb{C}G_{n,k}$ with nonzero Brouwer degree. Then the pair $(\mathbb{C}G_{n,k}, g)$ has the coincidence property.
	\end{proposition}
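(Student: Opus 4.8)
The plan is to apply the Lefschetz Coincidence Theorem (\thmref{LCT}) to the pair $(f,g)$ for an arbitrary continuous map $f$ on $\mathbb{C}G_{n,k}$, and show that $L(f,g)\neq 0$ under the stated hypotheses. The key simplification is that $g$ has nonzero Brouwer degree; combined with the cohomology classification for Grassmannians this will pin down both $g^*$ and the induced $f^*$ sufficiently to compute the Lefschetz coincidence number explicitly. First I would recall that since $H^*(\mathbb{C}G_{n,k};\mathbb{Q})$ is generated by $c_1$ and $g$ has nonzero degree, $g^*$ must be injective on the top cohomology, which forces $g^*(c_1)\neq 0$; indeed $g^*(c_1)=\lambda c_1$ for some $\lambda\neq 0$, since $H^2$ is one-dimensional. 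By \thmref{hom and hof} (using hypothesis \eqref{Homer}), $g^*$ is then the Adams map $g^*(x)=\lambda^i x$ for $x\in H^{2i}$ — or, in the case $k=n-k$, possibly the involution $c_i\mapsto(-\lambda)^i(c^{-1})_i$. An analogous analysis applies to $f$: either $f^*(c_1)=\mu c_1$ with $\mu\neq 0$, in which case $f^*$ is again an Adams map (or the involution), or $f^*(c_1)=0$, in which case $f^*$ vanishes on all of positive-degree cohomology, i.e.\ $f$ has Brouwer degree $0$.

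The main computation is to evaluate $L(f,g)=\sum_{i=0}^{d}(-1)^i\mathrm{tr}\big(D\circ g^*\circ D^{-1}\circ f_*:H_i\to H_i\big)$. Since $H_{\mathrm{odd}}(\mathbb{C}G_{n,k};\mathbb{Q})=0$, only even degrees contribute and all signs are $+1$, so $L(f,g)=\sum_{i=0}^{d}\mathrm{tr}(A_i)$ where $A_i$ is the composite endomorphism of $H_{2i}$. Using Poincaré duality, the operator $D\circ g^*\circ D^{-1}$ on $H_{2i}$ is conjugate (up to the duality pairing) to $g^*$ acting on $H^{2(d-i)}$, which in the Adams case is multiplication by $\lambda^{d-i}$ — a scalar — hence it commutes with everything and $\mathrm{tr}(A_i)=\lambda^{d-i}\mathrm{tr}(f_*|_{H_{2i}})$. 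Now split into cases. If $f$ also has nonzero degree and $f^*$ is the Adams map with parameter $\mu$, then $\mathrm{tr}(f_*|_{H_{2i}})=\mu^{i}d_{2i}$ (where $d_{2i}$ is the $2i$-th Betti number), giving $L(f,g)=\sum_{i=0}^d d_{2i}\lambda^{d-i}\mu^{i}=\lambda^d\sum_{i=0}^d d_{2i}(\mu/\lambda)^i$, which is nonzero by \lemref{sum neq 0} applied to the rational number $\mu/\lambda$ (noting $d=k(n-k)$ is even). If instead $f^*$ vanishes in positive degree, then $\mathrm{tr}(f_*|_{H_{2i}})=0$ for $i\geq 1$ and equals $1$ for $i=0$, so $L(f,g)=\lambda^{d}\neq 0$. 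Finally, in the exceptional case $k=n-k$ where $g^*$ (or $f^*$) could be the orthogonal-complement involution rather than a pure Adams map, one argues that this involution still acts as a graded automorphism with $g^*(x)=(-\lambda)^i x + (\text{lower-order corrections})$ on $H^{2i}$; since the trace of a composite only sees the diagonal, and the leading term on each graded piece is scalar multiplication by $(-\lambda)^{i}$ (the off-diagonal corrections $c_i \mapsto (c^{-1})_i$ feed strictly higher-degree monomials and so are strictly upper-triangular with respect to a suitable filtration), the trace computation reduces to the same alternating-sum expression with $\lambda$ replaced by $-\lambda$, and \lemref{sum neq 0} again applies since the sign is absorbed into the even power count.

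I expect the main obstacle to be the bookkeeping in the exceptional case $k=n-k$: one must verify carefully that the involution $c_i\mapsto(-\lambda)^i(c^{-1})_i$ has the same trace on each $H^{2i}$ as the diagonal Adams map $c_i \mapsto (-\lambda)^i c_i$, which requires organizing the monomial basis of $H^{2i}$ by a weight filtration and checking that the discrepancy $(c^{-1})_i - c_i$ (and products thereof) contributes nothing to the diagonal. Once that is settled, every branch of the argument funnels into \lemref{sum neq 0}, which is exactly engineered to give $\sum_i d_{2i}t^i\neq 0$ for all $t\in\mathbb{Q}$ when $k(n-k)$ is even, and the conclusion follows from \thmref{LCT}. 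A minor point to address is ensuring $L(f,g)$ is computed with the correct identification of Poincaré duals so that no extra sign or degree shift creeps in; this is routine but worth stating since the definition in \thmref{LCT} uses $D_M\circ g^*\circ D_M^{-1}\circ f_*$ rather than the more symmetric formulations.
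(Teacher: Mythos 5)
Your proposal is correct and follows essentially the same route as the paper: both $f^*$ and $g^*$ are Adams maps by \thmref{hom and hof}(i), the Lefschetz coincidence number computes to $\lambda^{d}\sum_{i} d_{2i}(\mu/\lambda)^{i}$ (with $\mu$ possibly $0$), and \lemref{sum neq 0} finishes via \thmref{LCT}. The one remark worth making is that your lengthy treatment of the exceptional involution when $k=n-k$ — which you identify as the main obstacle — is vacuous here, since hypothesis \eqref{Homer} forces $n>2k$, i.e.\ $k<n-k$, and part (i) of \thmref{hom and hof} (unlike Hoffman's part (ii)) admits no exceptional case, so the filtration/upper-triangularity argument never needs to be made.
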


	\begin{proof}
		Self-maps with nonzero Brouwer degree induces automorphisms in the rational cohomology algebra. Using Theorem \ref{hom and hof} part \textit{(i)}, there exist a nonzero rational $\lambda$ such that $g^*(c_i)=\lambda^ic_i, \forall i\in I.$ Let $f$ be a continuous map on $\mathbb CG_{n,k}$ and using \thmref{hom and hof} part \textit{(i)}, there exists $\mu \in \mathbb Q$ such that
		\[
		f^*(c_i)=\mu^ic_i, \forall i\in I.
		\]
		Then by the Universal Coefficient Theorem, $ \hom_{\mathbb{Q}} (H_i^{\mathbb CG};\mathbb Q) \cong H^i_{\mathbb CG}$ non-canonically which implies that 
		\begin{align*}
			\varphi \circ f_* &= f^*(\varphi) , \, \forall
			\varphi \in \hom _{\mathbb Q}\big(H_{2i}^{\mathbb CG}, \mathbb Q\big)\cong H^{2i}_{\mathbb CG}.\\
			\varphi(f_*(x))&=(f^*(\varphi))(x)= \mu^i\varphi(x)=\varphi(\mu^ix),\; \forall x\in H_{2i}^{\mathbb CG}.
		\end{align*}
		The last equation implies that $f_*(x)=\mu^ix,\, \forall x\in  H_{2i}^{\mathbb CG}.$
		Now observe that $D\circ g^*\circ D^{-1}\circ f_*: H_{2i}^{\mathbb CG}\to H_{2i}^{\mathbb CG}$ is given by 
		\[
		D\circ g^*\circ D^{-1}\circ f_*(x)=D\circ g^*\circ D^{-1}(\mu^ix)=\mu^iD\circ g^*(D^{-1}x)=\mu^iD(\lambda^{d-i}D^{-1}x)=\mu^i\lambda^{d-i}x.
		\]
		Thus for $x\in H_{2i}^{\mathbb{C}G}$, the Lefschetz coincidence number is given by
		\[
		\begin{array}{ll}
			L(f,g) &= \sum_{i=0}^d (-1)^{2i}
			\mathrm{tr}(D\circ g^*\circ D^{-1}\circ f_*(x)) \\[6pt]
			&=  \sum_{i=0}^d d_{2i}\mu^i\lambda^{d-i}  \\[6pt]
			&= \lambda^d \sum_{i=0}^d d_{2i}(\mu/\lambda)^i \neq 0 \quad (\because \lambda\neq 0) 
		\end{array}
		\]
		where $d_{2i}$ denotes $\dim_{\mathbb Q}H^{2i}_{\mathbb CG}$ and the last equation holds by using \lemref{sum neq 0}. Therefore, using \thmref{LCT} the pair $(\mathbb{C}G_{n,k},g)$ has the coincidence property.
	\end{proof}

	\subsection{} Denote by $H_*^\times = \bigoplus_{i\geq 0} H_i^{\times},\, H_*^{\mathbb{C}G} = \bigoplus_{i\geq 0} H_i^{\mathbb{C}G},\, H_*^{\mathbb{S}} = \bigoplus_{i\geq 0} H_i^{\mathbb{S}}$ and $\vartheta$ the fundamental class $[\mathbb S^m]\in H_m^{\mathbb S}$. Let $\{v_q\}$ be a homogeneous basis of $H_*^{\mathbb{C}G}$, and let $\{\delta_{v_q}\}$ denote the corresponding dual basis of  
	$\operatorname{Hom}(H_*^{\mathbb{C}G}, \mathbb Q) \cong H^*_{\mathbb{C}G}$, such that  
	$ \delta_{v_q} (v_p) = \delta_{qp}$ where $\delta_{qp}$ is the Kronecker delta function. Without loss of generality, assume that $1=v_0 \in \{v_i\}$ represents the generator of $H_0^{\mathbb{C}G} \cong \mathbb Q$.
	
	Over $\mathbb Q$, the K\"unneth Theorem yields the following decompositions 
	\begin{equation}\label{kunneth}
		H_i^\times \cong H_i^{\mathbb{C}G} \oplus (\vartheta \otimes H_{i-m}^{\mathbb{C}G}), 
		\qquad 
		H^i_\times \cong H^i_{\mathbb{C}G} \oplus uH^{i-m}_{\mathbb{C}G},
	\end{equation}
	where  $u \in H^m_{\times} \cong \operatorname{Hom}(H_m^{\times}, \mathbb Q)$ corresponds to the element $\delta_{\vartheta\otimes 1}$.

	Using \eqref{kunneth}, we can extend the chosen basis $\{v_q\}$ of $H_*^{\mathbb{C}G}$ to $	\{v_q\} \cup \{\vartheta \otimes v_q\}$ of $H_*^\times$ such that the corresponding dual basis can also be extended from $\{\delta_{v_q}\}$ of $ \hom (H_*^{\mathbb{C}G};\mathbb{Q})$ to $\{\delta_{v_q}\} \cup \{\delta_{\vartheta \otimes v_q}\} $ of $\hom (H_*^{\times};\mathbb{Q})$ satisfying:
	\begin{equation}\label{Kronecker relations}
		\delta_{v_q}( v_p) = \delta_{qp}, \quad
		\delta_{v_q}( \vartheta \otimes v_p)= 0, \quad
		\delta_{\vartheta\otimes v_q}(v_p) = 0, \quad
		\delta_{\vartheta\otimes v_q}(\vartheta \otimes v_p)= \delta_{qp}.
	\end{equation}
	Let $f$ be a continuous function on $P(m,n,k)$. Using \remref{lift} and the Universal Coefficient Theorem, there exist a lift $\tilde{f}$ on $\mathbb{S}^m\times \mathbb{C}G_{n,k}$ satisfying \begin{equation}\label{comm with phi}
		\varphi \circ \tilde{f}_* = \tilde{f}^*(\varphi) , \, \forall
		\varphi \in \hom _{\mathbb Q}\big(H_{2i}^{\mathbb CG}, \mathbb Q\big)\cong H^{2i}_{\mathbb CG}.
	\end{equation}
	
	Poincar\'e duality on $\mathbb S^{m}\times \mathbb C G_{n,k}$ can be described in terms of the duality on the Grassmannian factor. Let 
	$D_{\mathbb C G}\colon H^{i}_{\mathbb C G}\to H_{2d-i}^{\mathbb C G}$ 
	be the Poincar\'e duality isomorphism defined in \eqref{poinc dua} for $\mathbb C G_{n,k}$, where $d=k(n-k)$.  
	The Poincar\'e duality isomorphism on the product
	is then determined on the basis elements by  
	\begin{equation}
		D\colon H^{j}_{\times}\to H_{m+2d-j}^{\times}, \quad \delta_{v_i}\mapsto \vartheta\otimes D_{\mathbb C G}(\delta_{v_i})\quad\text{and}\quad \delta_{\vartheta\otimes v_i} \mapsto D_{\mathbb C G}(\delta_{v_i}).
	\end{equation}
	We are now ready to establish the following lemmas, which will be useful in the sequel.
	\begin{lemma}\label{image of homf}
		Let $f$ be a continuous function on $P(m,n,k)$ and $\tilde{f}$ be the lift defined in \remref{lift} such that $\tilde{f}^*(c_1)\neq au,\, a\in \mathbb{Q}$ and $k<n-k$. Then there exist $\lambda \in \mathbb{Q}\backslash \{0\}$ and $\mu \in \mathbb{Q}$ such that the induced map $\tilde{f}_*$ on $H_*^{\times}$ is of the following form.
		\begin{enumerate}
			\item Either $\tilde{f}_*(\vartheta\otimes x) = \mu \lambda^i (\vartheta \otimes x),\, \forall x\in H_{2i}^{\mathbb{C}G}$ or $\tilde{f}_*(\vartheta\otimes x) \in H_*^{\mathbb{CG}},\, \forall x \in H_*^{\mathbb{C}G}$.
			\item $\tilde{f}_*(x) = \lambda^i x + \vartheta\otimes y,$ for some $y \in H_{2i-m}^{\mathbb{C}G}, \, \forall x \in H_{2i}^{\mathbb{C}G}.$
		\end{enumerate}
		Moreover, $y=0$ in \textit{(2)} if $\tilde{f}_*(\vartheta\otimes x) = \mu \lambda^i (\vartheta \otimes x),\, \forall x\in H_{2i}^{\mathbb{C}G}$.
	\end{lemma}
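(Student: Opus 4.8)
The plan is to extract the shape of $\tilde f^*$ from \thmref{main thm} and then transpose it to homology using \eqref{comm with phi} and the dual-basis relations \eqref{Kronecker relations}. By \remref{lift} the lift $\tilde f$ exists. Since $\tilde f^*$ is a graded endomorphism of $H^*_\times$ with $\tilde f^*(c_1)\neq au$ and $k<n-k$, \thmref{main thm} provides $\lambda\in\mathbb Q\setminus\{0\}$ with $\tilde f^*(c_i)=\lambda^i c_i$ for all $i\in I$, together with either $\tilde f^*(u)=\mu u$ for some $\mu\in\mathbb Q$, or $\tilde f^*(u)=Q$ for some $Q\in H^*_{\mathbb{C}G}$ of degree $m$ with $Q^2=0$. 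Because the subring $H^*_{\mathbb{C}G}\subseteq H^*_\times$ is generated by $c_1,\dots,c_k$, the identities $\tilde f^*(c_i)=\lambda^i c_i$ force $\tilde f^*$ to map $H^*_{\mathbb{C}G}$ into itself and to act on $H^{2i}_{\mathbb{C}G}$ as the scalar $\lambda^i$ (it is $\lambda^i$ on every degree-$2i$ monomial in the $c_j$); in particular $\tilde f^*(\delta_{v_q})=\lambda^i\delta_{v_q}$ whenever $\deg v_q=2i$, since $\delta_{v_q}$ lies in this subring by \eqref{kunneth}, and $\tilde f^*(uP)=\tilde f^*(u)\,\lambda^j P$ for $P\in H^{2j}_{\mathbb{C}G}$.

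Next I would use that $\tilde f_*$ is the transpose of $\tilde f^*$ for the Kronecker evaluation, i.e. $\varphi(\tilde f_*\xi)=(\tilde f^*\varphi)(\xi)$, which is \eqref{comm with phi} extended to $H^*_\times$; under the bases of \eqref{Kronecker relations}, $H^{j}_{\mathbb{C}G}$ evaluates nondegenerately on $H_j^{\mathbb{C}G}$, $uH^{j-m}_{\mathbb{C}G}$ evaluates nondegenerately on $\vartheta\otimes H_{j-m}^{\mathbb{C}G}$, and the cross evaluations vanish. For part \textit{(2)}, fix $x\in H_{2i}^{\mathbb{C}G}$ and write $\tilde f_*(x)=x'+\vartheta\otimes y$ with $x'\in H_{2i}^{\mathbb{C}G}$ and $y\in H_{2i-m}^{\mathbb{C}G}$, using \eqref{kunneth}. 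For every $\varphi\in H^{2i}_{\mathbb{C}G}$ we then get $\varphi(x')=\varphi(\tilde f_*x)=(\tilde f^*\varphi)(x)=\lambda^i\varphi(x)$, and nondegeneracy gives $x'=\lambda^i x$, which is the asserted form.

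For part \textit{(1)} I would fix $x\in H_{2i}^{\mathbb{C}G}$, write $\tilde f_*(\vartheta\otimes x)=z+\vartheta\otimes w$ with $z\in H_{m+2i}^{\mathbb{C}G}$, $w\in H_{2i}^{\mathbb{C}G}$, and split into the two cases for $\tilde f^*(u)$. If $\tilde f^*(u)=\mu u$, then $\tilde f^*$ scales $uH^{2j}_{\mathbb{C}G}$ by $\mu\lambda^j$, so $\tilde f^*$ is block diagonal for the splitting $H^*_\times=H^*_{\mathbb{C}G}\oplus uH^*_{\mathbb{C}G}$; transposing, $\tilde f_*$ is block diagonal for $H_*^\times=H_*^{\mathbb{C}G}\oplus(\vartheta\otimes H_*^{\mathbb{C}G})$, whence $z=0$ and $w=\mu\lambda^i x$, giving $\tilde f_*(\vartheta\otimes x)=\mu\lambda^i(\vartheta\otimes x)$, and moreover $y=0$ in \textit{(2)} --- this is the final assertion. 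If instead $\tilde f^*(u)=Q\in H^*_{\mathbb{C}G}$, then every algebra generator of $H^*_\times$ is sent by $\tilde f^*$ into the subring $H^*_{\mathbb{C}G}$, hence so is the entire image of $\tilde f^*$; since elements of $H^*_{\mathbb{C}G}$ annihilate $\vartheta\otimes H_*^{\mathbb{C}G}$ by \eqref{Kronecker relations}, we get $\varphi(\tilde f_*(\vartheta\otimes x))=(\tilde f^*\varphi)(\vartheta\otimes x)=0$ for all $\varphi$, so $\tilde f_*(\vartheta\otimes x)=0\in H_*^{\mathbb{C}G}$, which is the second alternative.

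The step I expect to be the crux is this dichotomy: once $\tilde f^*(u)$ leaves the spherical line $\mathbb Q u$ and falls into $H^*_{\mathbb{C}G}$, the endomorphism $\tilde f^*$ no longer respects the complement $uH^*_{\mathbb{C}G}$ but collapses it into $H^*_{\mathbb{C}G}$ --- this is precisely why a correction term $\vartheta\otimes y$ can survive in \textit{(2)} while $\tilde f_*$ becomes trivial on $\vartheta\otimes H_*^{\mathbb{C}G}$ in \textit{(1)}. Everything else is bookkeeping: \thmref{main thm} pins down $\tilde f^*$, and \eqref{comm with phi}, \eqref{Kronecker relations}, \eqref{kunneth} let one transpose it cleanly.
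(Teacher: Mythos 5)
Your proposal is correct and follows essentially the same route as the paper: invoke Theorem~\ref{main thm} to pin down $\tilde f^*$ on the generators, then transpose through the Kronecker pairing using the dual bases of \eqref{Kronecker relations}, splitting into the two cases for $\tilde f^*(u)$. The only difference is cosmetic — you argue by block-diagonality and nondegeneracy where the paper computes coefficients $\delta_{v_p}\circ\tilde f_*(\cdot)$ and $\delta_{\vartheta\otimes v_p}\circ\tilde f_*(\cdot)$ one at a time, and in the second alternative you observe the slightly sharper conclusion $\tilde f_*(\vartheta\otimes x)=0$, which is consistent with the stated $\tilde f_*(\vartheta\otimes x)\in H_*^{\mathbb{C}G}$.
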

	\begin{proof}
		Using \propref{main thm}, there exist $\lambda \in \mathbb{Q}\backslash \{0\}$ such that $\tilde{f}^*(c_i) = \lambda^i c_i, \forall i \in I$ and either $\tilde{f}^*(u) = \mu u,\, \mu \in \mathbb{Q}$ or $\tilde{f}^*(u)\in H^*_{\mathbb{C}G}$. It is sufficient to prove the result for the chosen basis $\{v_q\}\cup\{\vartheta\otimes v_q\}$ of $H_{*}^{\times}$. 
		
		Let us consider the first case where $\tilde{f}^*(u) = \mu u$. Using $H^*_{\mathbb{C}G}\cong \hom(H_*^{\mathbb{C}G},\mathbb{Q})$, we have \begin{equation}\label{fstarco}
			\tilde{f}^*(\delta_{v_p}) = \lambda^i \delta_{v_{p}}, \, \forall v_{p}\in H_{2i}^{\mathbb{C}G}, \quad \tilde{f}^*(\delta_{\vartheta\otimes v_{p}})  = \mu \lambda^i (\delta_{\vartheta \otimes v_{p}}), \, \forall v_{p}\in H_{2i}^{\mathbb{C}G}.
		\end{equation}
		If $m$ is odd, then the coefficient of any basis element $v_p \in H_*^{\mathbb{C}G}$ in $\tilde{f}_*(\vartheta \otimes v_q)$ and $\vartheta \otimes v_p$ in $\tilde{f}_*(v_q)$ is zero because $\tilde{f}_*$ is a graded map. Let us consider the case where $m=2s$.
		By \eqref{comm with phi} and \eqref{fstarco}, the coefficient of a basis element $v_p\in H_{2i+m}^{\mathbb{C}G}$ in $\tilde{f}_*(\vartheta \otimes v_q)$ written as a $\mathbb{Q}$-linear combination of the basis elements from $\{v_q\}\cup\{\vartheta\otimes v_q\}$ is the following:
		$$	 \delta_{v_p}\circ \tilde f_*(\vartheta\otimes v_q)
		=  \tilde f^* (\delta_{v_p}) (\vartheta\otimes v_q)
		=  \lambda^{i+s} \delta_{v_p}(\vartheta\otimes v_q)
		= 0,\, \forall v_q \in H_{2i}^{\mathbb{C}G}$$
		and the coefficient of a basis element $\vartheta \otimes v_p\in \vartheta \otimes H_{2i}^{\mathbb{C}G}$ in $\tilde{f}_*(\vartheta \otimes v_q)$ is
		$$ \delta_{\vartheta\otimes v_p}\circ \tilde f_*(\vartheta\otimes v_q)
		=  \tilde f^*(\delta_{\vartheta\otimes v_p})(\vartheta\otimes v_q)
		=  \mu \lambda^{i}\;\delta_{\vartheta \otimes v_p}(\vartheta\otimes v_q)= \mu\lambda^{i}\delta_{pq},\, \forall v_q \in H^{\mathbb{C}G}_{2i}.$$
		This implies that $$\tilde{f}_*(\vartheta \otimes v_q) = \mu \lambda^i (\vartheta \otimes v_q), \, \forall v_q \in H_{2i}^{\mathbb{C}G}.$$ 
		Using similar calculations given above, it is easy to show that $$\delta_{v_p}\circ \tilde{f}_*(v_q) = \lambda^i \delta_{pq},\, \forall v_q \in H_{2i}^{\mathbb{C}G},\quad \delta_{\vartheta \otimes v_p}\circ \tilde{f}_*(v_q)=0,\, \forall v_q\in H_{2i}^{\mathbb{C}G}.$$ Therefore, $\tilde{f}_*(v_q) = \lambda^i v_q,\, \forall v_q \in H_{2i}^{\mathbb{C}G}$.\\
		
		If $\tilde{f}^*(u)\in H^*_{\mathbb{C}G}$. Again using $H^*_{\mathbb{C}G}\cong \hom(H_*^{\mathbb{C}G},\mathbb{Q})$, we have \begin{equation}\label{second u}
			\tilde{f}^*(\delta_{v_p}) = \lambda^i \delta_{v_{p}}, \, \forall v_{p}\in H_{2i}^{\mathbb{C}G}, \quad \tilde{f}^*(\delta_{\vartheta\otimes v_{p}})  \in H^*_{\mathbb{C}G},\, \forall  v_{p}\in H_{2i}^{\mathbb{C}G}.
		\end{equation}
		By \eqref{comm with phi} and \eqref{second u}, we get $\delta_{v_p}\circ \tilde{f}_*(v_q) = \lambda^i \delta_{pq},\, \forall v_q \in H_{2i}^{\mathbb{C}G},$ which implies that $\tilde{f}_*(x) = \lambda^i x + \vartheta\otimes y,$ for some $y \in H_{2i-m}^{\mathbb{C}G}, \, \forall x \in H_{2i}^{\mathbb{C}G}.$ Note that $\tilde f^*(\delta_{\vartheta\otimes v_p})\in H_{\mathbb CG}^*$ and equal to some $\sum a_j\delta_{v_j}$. Then 
		\begin{equation}\label{computation}
			\delta_{\vartheta\otimes v_p}\circ \tilde f_*(\vartheta\otimes v_q)= \tilde f^*(\delta_{\vartheta\otimes v_p})(\vartheta\otimes v_q)=\sum a_j\delta_{v_j}(\vartheta\otimes v_q)=0.
		\end{equation}
		Hence, $\tilde f_*(\vartheta \otimes v_q)\in H_*^{\mathbb CG}$ for all $\vartheta \otimes v_q\in\vartheta\otimes H_*^{\mathbb CG} $.
	\end{proof}
	\begin{lemma}\label{image of homf under hom}
		Assume that the hypothesis \eqref{Homer} is satisfied. 	Let $f$ be a continuous function on $P(m,n,k)$ and $\tilde{f}$ be the lift defined in \remref{lift} such that $\tilde{f}^*(c_1)= au,\, a\in \mathbb{Q}$. Then the induced map $\tilde{f}_*$ on $H_{*}^{\times}$ is of the following form.
		\begin{enumerate}
			\item $\tilde f_* (x)\in \vartheta\otimes H_{2i-m}^{\mathbb CG},\, \forall x\in H_{2i}^{\mathbb CG},\, \forall i>0$.
			\item $\tilde{f}_*(\vartheta\otimes 1) = \mu (\vartheta\otimes 1 )+y, \, y\in H_{m}^{\mathbb{C}G}, \quad\\ \tilde f_*(\vartheta \otimes x)\in H_{2i+m}^{\mathbb{C}G},\, \forall x \in H_{2i}^{\mathbb{C}G}, i>0\;$ if $\tilde{f}^*(u) = \mu u,\,  \mu \in \mathbb{Q}$ 
		\end{enumerate}
	\end{lemma}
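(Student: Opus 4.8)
The plan is to read off the structure of $\tilde f_*$ from the description of $\tilde f^*$ furnished by \propref{main thm 2}, proceeding exactly as in the proof of \lemref{image of homf}: use the universal-coefficient identification $\hom_{\mathbb Q}(H^\times_*,\mathbb Q)\cong H^*_\times$ together with the adjunction $\varphi\circ\tilde f_*=\tilde f^*(\varphi)$ (cf. \eqref{comm with phi} and the proof of \lemref{image of homf}), expand $\tilde f_*$ of a basis element in the basis $\{v_q\}\cup\{\vartheta\otimes v_q\}$ of $H^\times_*$, and evaluate the dual functionals $\delta_{v_p},\delta_{\vartheta\otimes v_p}$ to extract the coefficients. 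Since \eqref{Homer} holds and $\tilde f^*(c_1)=au$, \propref{main thm 2} gives $\tilde f^*(c_i)\in uH^*_{\mathbb{C}G}$ for every $i\in I$ (for $i>1$ this is part \textit{(2)} of that proposition, and for $i=1$ it is the hypothesis, since $au=u\cdot a$ with $a\in H^0_{\mathbb{C}G}$), together with the dichotomy that either $\tilde f^*(u)=\mu u$ or $\tilde f^*(u)\in H^*_{\mathbb{C}G}$ with square zero.

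The one structural fact I would isolate at the outset is: \emph{for $z\in H^{>0}_{\mathbb{C}G}\subseteq H^*_\times$ one has $\tilde f^*(z)\in uH^*_{\mathbb{C}G}$, and moreover, when $\tilde f^*(u)=\mu u$, one also has $\tilde f^*\bigl(uH^{>0}_{\mathbb{C}G}\bigr)=0$.} This is immediate because the defining relations of $H^*_{\mathbb{C}G}$ live in degrees $\ge 2(n-k+1)$, so such a $z$ is represented by a polynomial in $c_1,\dots,c_k$ with no constant term; applying the ring homomorphism $\tilde f^*$ sends each monomial of length $\ge 2$ into $u^2H^*_{\mathbb{C}G}=0$ and each $c_i$ into $uH^*_{\mathbb{C}G}$, whence $\tilde f^*(z)\in uH^*_{\mathbb{C}G}$; multiplying once more by $\mu u$ kills it. I would also record that, under $H^*_\times\cong\hom_{\mathbb Q}(H^\times_*,\mathbb Q)$, the subspace $uH^*_{\mathbb{C}G}$ is precisely the span of the $\delta_{\vartheta\otimes v_q}$ (indeed $\delta_{\vartheta\otimes v_q}=u\,\delta_{v_q}$), so that every element of $uH^*_{\mathbb{C}G}$ annihilates $H^{\mathbb{C}G}_*\subseteq H^\times_*$ by \eqref{Kronecker relations}.

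Granting this, part \textit{(1)} is quick: for $x\in H^{\mathbb{C}G}_{2i}$ with $i>0$, the coefficient of $v_p$ in $\tilde f_*(x)$ is $\delta_{v_p}\bigl(\tilde f_*(x)\bigr)=\tilde f^*(\delta_{v_p})(x)$; by grading only $v_p$ of degree $2i>0$ can occur, so $\tilde f^*(\delta_{v_p})\in uH^*_{\mathbb{C}G}$ and hence annihilates $x\in H^{\mathbb{C}G}_*$. Thus $\tilde f_*(x)\in\vartheta\otimes H^{\mathbb{C}G}_*$, and by degree $\tilde f_*(x)\in\vartheta\otimes H^{\mathbb{C}G}_{2i-m}$. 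For part \textit{(2)}, assume $\tilde f^*(u)=\mu u$. Since $\vartheta\otimes 1\in H^\times_m$, the only basis vectors occurring in $\tilde f_*(\vartheta\otimes 1)$ are $\vartheta\otimes 1$ and the $v_p$ of degree $m$; the $\vartheta\otimes1$-coefficient is $\delta_{\vartheta\otimes1}\bigl(\tilde f_*(\vartheta\otimes1)\bigr)=\tilde f^*(u)(\vartheta\otimes1)=\mu$, giving $\tilde f_*(\vartheta\otimes1)=\mu(\vartheta\otimes1)+y$ with $y\in H^{\mathbb{C}G}_m$. For $x\in H^{\mathbb{C}G}_{2i}$ with $i>0$, the coefficient of $\vartheta\otimes v_p$ in $\tilde f_*(\vartheta\otimes x)$ is $\tilde f^*(\delta_{\vartheta\otimes v_p})(\vartheta\otimes x)=\tilde f^*(u\,\delta_{v_p})(\vartheta\otimes x)$; by grading $\deg v_p=2i>0$, so $u\,\delta_{v_p}\in uH^{>0}_{\mathbb{C}G}$ and $\tilde f^*(u\,\delta_{v_p})=0$ by the fact isolated above. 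Hence $\tilde f_*(\vartheta\otimes x)\in H^{\mathbb{C}G}_*$, i.e. $\tilde f_*(\vartheta\otimes x)\in H^{\mathbb{C}G}_{2i+m}$.

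The computations are entirely routine; the only point demanding care is the grading bookkeeping — it is essential that $i>0$ so that the relevant dual classes $\delta_{v_p}$ have positive degree and the isolated fact applies — together with the correct identification $\delta_{\vartheta\otimes v_q}=u\,\delta_{v_q}$, which is what lets one invoke $u\cdot uH^*_{\mathbb{C}G}=0$. I do not anticipate any genuine obstacle beyond keeping these identifications straight.
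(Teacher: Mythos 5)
Your proposal is correct and follows essentially the same route as the paper: apply \propref{main thm 2}, pass to the dual bases via \eqref{comm with phi} and \eqref{Kronecker relations}, and read off the coefficients of $\tilde f_*$ on the basis $\{v_q\}\cup\{\vartheta\otimes v_q\}$. The only difference is cosmetic — you make explicit (via the identification $\delta_{\vartheta\otimes v_q}=u\,\delta_{v_q}$ and the observation that $\tilde f^*(H^{>0}_{\mathbb{C}G})\subseteq uH^*_{\mathbb{C}G}$) the vanishing $\tilde f^*(\delta_{\vartheta\otimes v_p})=0$ for $p$ of positive degree, which the paper records without comment in its analogue of \eqref{fstarcom}.
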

	\begin{proof}
		Using \propref{main thm 2}, we have $\tilde{f}^*(c_i) = u P_i,$ for some $P_i\in H^*_{\mathbb{C}G}$ and either $\tilde{f}^*(u) = \mu u,\, \mu \in \mathbb{Q}$ or $\tilde{f}^*(u)\in H^*_{\mathbb{C}G}$. \\
		Let us consider the first case where $\tilde{f}^*(u) = \mu u$. Using $H^*_{\mathbb{C}G}\cong \hom(H_*^{\mathbb{C}G},\mathbb{Q})$, we have for $i>0$ 
		\begin{equation}\label{fstarcom}
			\tilde{f}^*(\delta_{v_p}) = \sum a_{jp}\delta_{\vartheta \otimes v_j} , \, \forall v_{p}\in H_{2i}^{\mathbb{C}G}, \quad \tilde{f}^*(\delta_{\vartheta\otimes 1})  =\mu \delta_{\vartheta \otimes 1}, \quad \tilde{f}^*(\delta_{\vartheta\otimes v_{p}})  = 0, \, \forall v_{p}\in H_{2i}^{\mathbb{C}G}.
		\end{equation}
		Using \eqref{comm with phi}, \eqref{fstarcom} and similar calculations given in the proof of \lemref{image of homf}, we have for $ v_p\neq 1$
		$$\delta_{v_p}\circ \tilde{f}_*(v_q) = 0,\, \forall v_q \in H_{2i}^{\mathbb{C}G},\quad \delta_{\vartheta\otimes v_p}\circ \tilde{f}_*(\vartheta\otimes v_q) =0,\, \forall v_q \in H_{2i}^{*}$$ that concludes the result.
		
		When $\tilde{f}^*(u)\in H^*_{\mathbb{C}G}$ then also we have $\tilde{f}^*(\delta_{v_p}) = \sum a_{jp}\delta_{\vartheta \otimes v_j} , \, \forall v_{p}\in H_{2i}^{\mathbb{C}G},\, \forall i>0$ which implies that $\delta_{v_p}\circ \tilde{f}_*(v_q) = 0,\, \forall v_q \in H_{2i}^{\mathbb{C}G},\,\forall i>0.$   		
	\end{proof}

\subsection{} The following theorems provide a criteria for the existence of coincidence points between a pair of continuous functions on $P(m,n,k)$.

\begin{theorem}\label{coincidence thm}
	Let $P(m,n,k)$ be a generalized Dold manifold with $k<n-k$ and $k(n-k)$ even. Let $f$ and $g$ be two continuous maps on $P(m,n,k)$ and $\tilde f, \tilde g$ be their lifts as defined in  Remark \ref{lift} such that
	\begin{enumerate}	
		\item $g^*$ is an automorphism of $H^*(P(m,n,k);\mathbb Q)$. 
		\item $\tilde{f}^*(c_1) \neq au,\, a \in \mathbb{Q}$.
		\item $\deg(p\circ g \circ s)\neq -\deg (p\circ f\circ s)$ if $m$ is odd.
	\end{enumerate}
	where $s$ denotes a section of the $\mathbb CG_{n,k}$-bundle projection $p: P(m,n,k)\to \mathbb RP^m$.	Then, there is a point of coincidence of $f$ and  $g$.
\end{theorem}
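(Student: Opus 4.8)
The plan is to lift the problem to the double cover $\pi\colon\mathbb S^{m}\times\mathbb CG_{n,k}\to P(m,n,k)$ and apply the Lefschetz Coincidence Theorem \thmref{LCT} there. Write $\theta=\alpha\times\sigma$ for the generating deck transformation; then $\tilde f$ and $\theta\circ\tilde f$ are the two lifts of $f$ (and likewise for $g$). A standard covering-space argument, as in the proof of part \textit{(2)} of \propref{necessary condition}, shows that $f$ and $g$ have a point of coincidence on $P(m,n,k)$ if and only if $\tilde f,\tilde g$ or $\theta\tilde f,\tilde g$ have a point of coincidence on $\mathbb S^{m}\times\mathbb CG_{n,k}$. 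Since $\mathbb S^{m}\times\mathbb CG_{n,k}$ is a closed oriented manifold — whereas $P(m,n,k)$ itself is orientable only for $m$ odd when $k(n-k)$ is even — it therefore suffices to show that at least one of $L(\tilde f,\tilde g),\ L(\theta\tilde f,\tilde g)$ is nonzero.

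I would first identify the induced (co)homology maps. Since $k<n-k$ forces $n>2$, hypothesis \textit{(1)} together with \corref{automor} gives that $\tilde g^{*}$ is an automorphism of $H^{*}_{\times}$ with $\tilde g^{*}(c_i)=\lambda'^{\,i}c_i$ and $\tilde g^{*}(u)=\mu'u$, where $\lambda',\mu'\in\mathbb Q\setminus\{0\}$; thus $\tilde g^{*}$ acts by $\lambda'^{\,q}$ on $H^{2q}_{\mathbb CG}\subseteq H^{2q}_{\times}$ and by $\mu'\lambda'^{\,q}$ on $uH^{2q}_{\mathbb CG}$. By hypothesis \textit{(2)} and \thmref{main thm}, $\tilde f^{*}(c_i)=\lambda^{i}c_i$ with $\lambda\in\mathbb Q\setminus\{0\}$, and either (A) $\tilde f^{*}(u)=\mu u$ for some $\mu\in\mathbb Q$, or (B) $\tilde f^{*}(u)\in H^{*}_{\mathbb CG}$ with square zero; correspondingly \lemref{image of homf} describes $\tilde f_{*}$ on $H_{*}^{\times}=H_{*}^{\mathbb CG}\oplus\bigl(\vartheta\otimes H_{*}^{\mathbb CG}\bigr)$. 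Plugging $\tilde g^{*}$ into the explicit form of the Poincar\'e duality $D$ on the product recorded just before \lemref{image of homf} (in terms of $D_{\mathbb CG}$), a short computation shows that $D\circ\tilde g^{*}\circ D^{-1}$ preserves both K\"unneth summands and acts by $\mu'\lambda'^{\,d-q}$ on $H_{2q}^{\mathbb CG}$ and by $\lambda'^{\,d-q}$ on $\vartheta\otimes H_{2q}^{\mathbb CG}$, where $d=k(n-k)$.

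In case (A), the ``moreover'' clause of \lemref{image of homf} applies, so $\tilde f_{*}$ acts by $\lambda^{q}$ on $H_{2q}^{\mathbb CG}$ and by $\mu\lambda^{q}$ on $\vartheta\otimes H_{2q}^{\mathbb CG}$; hence $D\tilde g^{*}D^{-1}\circ\tilde f_{*}$ is diagonal with respect to the K\"unneth decomposition, and taking traces degree by degree with the signs $(-1)^{i}$ yields
\[
L(\tilde f,\tilde g)=\bigl(\mu'+(-1)^{m}\mu\bigr)\,\lambda'^{\,d}\sum_{q=0}^{d}d_{2q}\,(\lambda/\lambda')^{q},
\qquad
L(\theta\tilde f,\tilde g)=\bigl(\mu'-\mu\bigr)\,\lambda'^{\,d}\sum_{q=0}^{d}d_{2q}\,(-\lambda/\lambda')^{q},
\]
where the second equality uses $\theta^{*}(u)=(-1)^{m+1}u$ and $\theta^{*}(c_i)=(-1)^{i}c_i$, and both sums are nonzero by \lemref{sum neq 0} because $d=k(n-k)$ is even. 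For $m$ even the prefactors are $\mu'\pm\mu$, and since $\mu'\neq0$ they cannot both vanish, so one of the two coincidence numbers is nonzero with no further hypothesis. For $m$ odd both prefactors reduce to $\mu'-\mu$, and this is nonzero by hypothesis \textit{(3)}: the lift $\tilde s(v)=(v,x_0)$ of the section $s$ satisfies $p_1\circ\tilde s=\mathrm{id}_{\mathbb S^{m}}$, so $(p_1\circ\tilde f\circ\tilde s)^{*}u=\tilde f^{*}(u)=\mu u$, and since a self-map of $\mathbb R P^{m}$ and its lift to $\mathbb S^{m}$ have equal Brouwer degree for $m$ odd, one gets $\deg(p\circ f\circ s)=\mu$ and likewise $\deg(p\circ g\circ s)=\mu'$, whence \textit{(3)} rules out the vanishing. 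In case (B), $\tilde f^{*}$ carries $uH^{*}_{\mathbb CG}$ into $H^{*}_{\mathbb CG}$, so $\tilde f_{*}$ kills $\vartheta\otimes H_{*}^{\mathbb CG}$ entirely while still acting by $\lambda^{q}$ on $H_{2q}^{\mathbb CG}$; only the $H_{*}^{\mathbb CG}$-blocks contribute, and $L(\tilde f,\tilde g)=\mu'\lambda'^{\,d}\sum_{q=0}^{d}d_{2q}(\lambda/\lambda')^{q}\neq0$ at once.

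In every case one of $L(\tilde f,\tilde g),\ L(\theta\tilde f,\tilde g)$ is nonzero, so \thmref{LCT} yields a coincidence of the corresponding pair of lifts on $\mathbb S^{m}\times\mathbb CG_{n,k}$, which projects under $\pi$ to a coincidence of $f$ and $g$ on $P(m,n,k)$. I expect the main obstacle to be the sign bookkeeping in the odd case: the parity of $m$ enters simultaneously through $\theta^{*}(u)$, through the signs in the coincidence sum, and through the identification of $\tilde f^{*}(u)$ with $\deg(p\circ f\circ s)$, and these must be kept mutually consistent so that hypothesis \textit{(3)} is exactly the condition needed. Verifying the Poincar\'e-duality computation for $D\circ\tilde g^{*}\circ D^{-1}$, and that in case (B) the spherical homology is indeed annihilated, are comparatively routine.
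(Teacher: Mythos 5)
Your proposal follows the paper's proof almost step for step: lift to the double cover, invoke \corref{automor} for $\tilde g^{*}$, use \thmref{main thm} and \lemref{image of homf} to describe $\tilde f_{*}$, compute $L(\tilde f,\tilde g)$ and $L(\theta\circ\tilde f,\tilde g)$ via the explicit Poincar\'e duality on the product, and finish with \lemref{sum neq 0}. Your identification of $D\tilde g^{*}D^{-1}$ on the two K\"unneth summands, your treatment of the case $\tilde f^{*}(u)\in H^{*}_{\mathbb CG}$, and the even-$m$ conclusion all agree with the paper.

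The gap is in the final step for $m$ odd, and it is a genuine one. Your trace formula carries the sign $(-1)^{2q+m}$ on the summand $\vartheta\otimes H_{2q}^{\mathbb CG}\subseteq H_{2q+m}^{\times}$, giving $L(\tilde f,\tilde g)=\bigl(\mu'+(-1)^{m}\mu\bigr)\lambda'^{\,d}\sum_{q}d_{2q}(\lambda/\lambda')^{q}$. This is in fact the correct formula: it is the one consistent with $L(\mathrm{id},\mathrm{id})=\chi(\mathbb S^{m}\times\mathbb CG_{n,k})=0$ for $m$ odd, whereas the paper's proof writes the prefactor as $(\mu_1+\mu)$ for all $m$ and thus drops this sign. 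But with your formula both prefactors in the odd case equal $\mu'-\mu$, so the condition you actually need is $\mu'\neq\mu$, i.e.\ $\deg(p\circ g\circ s)\neq\deg(p\circ f\circ s)$. Hypothesis \textit{(3)} as stated gives $\mu'\neq-\mu$, which does not imply $\mu'\neq\mu$ (take $\mu'=\mu=1$). Hence the sentence ``whence \textit{(3)} rules out the vanishing'' is a non sequitur: given your own computation, the argument does not close for $m$ odd under the hypothesis as written. Since your sign is the right one, this also exposes an error in the paper's computation; the statement is salvageable by replacing hypothesis \textit{(3)} with $\deg(p\circ g\circ s)\neq\deg(p\circ f\circ s)$, but you need to say this explicitly rather than assert that the stated hypothesis is exactly what is required.
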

\begin{proof}
	Using \corref{automor}, we have $\tilde g^*$ is an automorphism on $H^*_{\times}$ given by
	$\tilde g^*(c_i) = \lambda_1^i c_i$, and $\tilde g^*(u) = \mu_1 u$ for some $\lambda_1, \mu_1 \in \mathbb{Q}\backslash \{0\}$ if $k<n-k$.
	
	Using \lemref{image of homf}, there exist $\lambda \in \mathbb{Q}\backslash \{0\}$ and $\mu \in \mathbb{Q}$ such that $\tilde{f}_*$ is of the following form, 
	\begin{equation}\label{flower star}
		\begin{split}
			\tilde f_*(x)=\lambda^ix+\vartheta\otimes y, \text{ for some }y\in H_{2i-m}^{\mathbb CG}, \, \forall x \in H_{2i}^{\mathbb{C}G}\\
			\tilde f_*(\vartheta\otimes x)=\mu\lambda^i (\vartheta\otimes x) , \text{ or } \tilde{f}_*(\vartheta \otimes x) = z, \text{ for some }z\in H_{2i+m}^{\mathbb CG},\, \forall x \in H_{2i}^{\mathbb{C}G}
		\end{split}
	\end{equation}
	To prove that $f$ has a point of coincidence with $g$, it is sufficient to prove that either $\tilde{f}$ or the composition $\theta \circ \tilde{f} $ has a point of coincidence with $g$ where $\theta = \alpha \times \sigma$ defined in \secref{gds}. By \thmref{LCT}, we need to compute $L(\tilde f, \tilde g)$ and $L(\theta \circ \tilde f, \tilde g)$.
	
	For $x\in H_{2i}^{\mathbb{C}G}$, we have 
	\begin{equation*}\label{D cal}
		\begin{split}
			D\tilde g^* D^{-1} \tilde f_*(x)=\mu_1 \lambda^i\lambda_1^{d-i} x + \vartheta\otimes y'\text{ for some }y' \in H^{\mathbb CG}_{2i-m}\\
			D\tilde g^* D^{-1} \tilde f_*(\vartheta\otimes x)= \mu \lambda^i\lambda_1^{d-i}(\vartheta\otimes x)+z' \text{ for some }z'\in H_{2i+m}^{\mathbb CG}.
		\end{split}
	\end{equation*}
	where $z^{'} =0$ or $\mu =0$ depending on the image of $\tilde{f}_*(\vartheta \otimes x)$.
	Recall that  $d_{2i}$ denote the dimension $\dim H^{2i}_{\mathbb CG}$. The Lefschetz number  $L(\tilde{f},\tilde g)$ is
	\begin{equation*}\label{L(f,g)}
		L(\tilde f,\tilde g) =(\mu_1 + \mu) \sum_{i=0}^{k(n-k)} d_{2i} \lambda^i\lambda_1^{d-i}.
	\end{equation*}
	Using the \lemref{sum neq 0} and the fact that $\lambda_1\neq 0$, the sum
	\[
	\sum_{i=0}^{k(n-k)} d_{2i} \lambda^i\lambda_1^{d-i}=\lambda_1^d\sum_{i=0}^{k(n-k)} d_{2i} (\lambda/\lambda_1)^i\neq 0,
	\]
	Since $\tilde f\circ\theta=\theta\circ \tilde f$, it follows that $$(\theta\circ \tilde  f )^*(c_i)= (-1)^i \tilde{f}^*(c_i), \forall i \in I, \quad (\theta \circ\tilde  f)^*(u)=\begin{cases}
		-\tilde  f^*(u), \text{ if } m \text{ is even,}\\
		\tilde  f^*(u), \text{ if } m \text{ is odd}.
	\end{cases}$$ 
	If $m$ is even, then  
	\begin{equation*}\label{D with theta}
		\begin{split}
			D\tilde g^* D^{-1} (\theta\circ \tilde f)_*(x)=\mu_1(- \lambda)^i\lambda_1^{d-i} x + \vartheta\otimes y''\text{ for some }y'' \in H^{\mathbb CG}_{2i-m} \\
			D\tilde g^* D^{-1} (\theta\circ \tilde f)_*(\vartheta\otimes x)= -\mu (-\lambda)^i\lambda_1^{d-i}\vartheta\otimes x+z'' \text{ for some }z''\in H_{2i+m}^{\mathbb CG}.
		\end{split}
	\end{equation*}
	Thus,  the Lefschetz number is
	\begin{equation*}\label{L(theta f,g)}
		L(\theta \circ\tilde f,\tilde g) =(\mu_1 - \mu)\sum_{i=0}^{k(n-k)} d_{2i} (-\lambda)^i\lambda_1^{d-i}.
	\end{equation*}
	Also, using  $\mu_1\neq 0$ and \lemref{sum neq 0}  it follows that that either $L(\tilde f, \tilde g)$ or $L(\theta\circ \tilde f,\tilde g)$ is nonzero. 
	
	If $m$ is odd, $	L(\theta \circ\tilde f,\tilde g) =(\mu_1 + \mu)\sum_{i=0}^{k(n-k)} d_{2i} (-\lambda)^i\lambda_1^{d-i}.$ Using  \lemref{sum neq 0} and $\deg(p\circ g \circ s)\neq -\deg (p\circ f\circ s)$ that is $\mu_1 \neq -\mu$, we have both $L(\tilde{f},\tilde{g})$ and $L(\theta \circ\tilde f,\tilde g)$ are nonzero. 
	This ensures that there exist a point of conincidence between $f $ and $g$.
\end{proof}
\begin{theorem}\label{coincidence thm under hom}
	Let $P(m,n,k)$ be a generalized Dold manifold with $k(n-k)$ even and assume that the hypothesis \eqref{Homer} is satisfied. Let $g$ and $f$ are two continuous maps on $P(m,n,k)$ and $\tilde g, \tilde f$ be their lifts as defined in  Remark \ref{lift} such that
	\begin{enumerate}	
		\item $g^*$ is an automorphism of $H^*(P(m,n,k);\mathbb Q)$. 
		\item $\tilde{f}^*(u) = \mu u,\, \mu \in \mathbb{Q}$ if $\tilde{f}^*(H^*_{\mathbb CG}) \nsubseteq H^*_{\mathbb CG}$ and $m$ is even.
		\item $\deg(p\circ g \circ s)\neq -\deg (p\circ f\circ s)$ if $m$ is odd.
	\end{enumerate}
	where $s$ denotes a section of the $\mathbb CG_{n,k}$-bundle projection $p: P(m,n,k)\to \mathbb RP^m$. Then, there is a point of coincidence of $f$ and  $g$.
\end{theorem}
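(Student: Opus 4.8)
The plan is to follow the scheme of the proof of \thmref{coincidence thm}, feeding in the structural information about $\tilde f$ supplied by \propref{main thm 2} and \lemref{image of homf under hom} in place of the Adams-type information used there. First note that hypothesis \eqref{Homer} forces $k<n-k$ and $n>2$, so by \corref{automor} the lift $\tilde g$ induces an automorphism $\tilde g^*$ of $H^*_\times$ with $\tilde g^*(c_i)=\lambda_1^i c_i$ and $\tilde g^*(u)=\mu_1 u$ for some $\lambda_1,\mu_1\in\mathbb{Q}\setminus\{0\}$; as in the computations of \propref{CP of CGnk} and \thmref{coincidence thm}, the Poincar\'e-dual operator $D\circ\tilde g^*\circ D^{-1}$ on $H_*^{\times}$ acts on $H_{2i}^{\mathbb{C}G}$ as multiplication by $\mu_1\lambda_1^{d-i}$ and on $\vartheta\otimes H_{2i}^{\mathbb{C}G}$ as multiplication by $\lambda_1^{d-i}$, where $d=k(n-k)$ and $D$ is Poincar\'e duality of the closed oriented manifold $\mathbb{S}^m\times\mathbb{C}G_{n,k}$. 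Since every coincidence of $\tilde g$ with $\tilde f$ or with $\theta\circ\tilde f$ (here $\theta=\alpha\times\sigma$) descends to a coincidence of $f$ and $g$, it suffices by the Lefschetz Coincidence Theorem to show that $L(\tilde f,\tilde g)\neq 0$ or $L(\theta\circ\tilde f,\tilde g)\neq 0$.

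I would then distinguish two cases. If $\tilde f^*(c_1)\neq au$ for every $a\in\mathbb{Q}$, then \thmref{main thm} (with $k<n-k$) gives $\tilde f^*(c_i)=\lambda^i c_i$, so $\tilde f^*(H^*_{\mathbb{C}G})\subseteq H^*_{\mathbb{C}G}$, hypothesis $(2)$ becomes vacuous, and hypotheses $(1)$--$(3)$ are precisely those of \thmref{coincidence thm}, which applies and finishes this case. The substance is the case $\tilde f^*(c_1)=au$. Here \propref{main thm 2} gives $\tilde f^*(c_i)=uP_i$ for all $i$ (with $P_i\in H^{2i-m}_{\mathbb{C}G}$) and either $\tilde f^*(u)=\mu u$, $\mu\in\mathbb{Q}$, or $\tilde f^*(u)$ a square-zero class in $H^{>0}_{\mathbb{C}G}$; and \lemref{image of homf under hom} shows that $\tilde f_*$ is the identity on $H_0$, sends $H_{2i}^{\mathbb{C}G}$ into $\vartheta\otimes H_{2i-m}^{\mathbb{C}G}$ for $i>0$, and — when $\tilde f^*(u)=\mu u$ — sends $\vartheta\otimes H_{2i}^{\mathbb{C}G}$ into $H_{2i+m}^{\mathbb{C}G}$ for $i>0$ with $\vartheta\otimes 1\mapsto\mu(\vartheta\otimes 1)+y$, $y\in H_m^{\mathbb{C}G}$.

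The crucial point is that $\tilde f_*$ \emph{interchanges the two K\"unneth summands} of $H_i^{\times}\cong H_i^{\mathbb{C}G}\oplus(\vartheta\otimes H_{i-m}^{\mathbb{C}G})$ in every positive degree, while $D\tilde g^*D^{-1}$ \emph{preserves} each summand; hence $D\tilde g^*D^{-1}\tilde f_*$ has vanishing diagonal block on $H_{2i}^{\mathbb{C}G}$ and on $\vartheta\otimes H_{2i}^{\mathbb{C}G}$ for every $i>0$, and only the one-dimensional pieces $H_0=\mathbb{Q}v_0$ and $\mathbb{Q}(\vartheta\otimes 1)$ contribute to the Lefschetz numbers. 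Evaluating on these two pieces — using $\tilde g^*(u\,c_{\mathrm{top}})=\mu_1\lambda_1^d\,u\,c_{\mathrm{top}}$ and $\tilde g^*(c_{\mathrm{top}})=\lambda_1^d c_{\mathrm{top}}$ for a top generator $c_{\mathrm{top}}$ of $H^{2d}_{\mathbb{C}G}$, together with $\theta^*(u)=(-1)^{m+1}u$ from \eqref{defn of theta*} — one finds that $L(\tilde f,\tilde g)$ and $L(\theta\circ\tilde f,\tilde g)$ are, up to the nonzero factor $\lambda_1^d$, equal to $\mu_1+\mu$ and $\mu_1-\mu$ when $m$ is even, and both equal to $\mu_1-\mu$ when $m$ is odd. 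For $m$ even these cannot vanish simultaneously since $\mu_1\neq 0$; for $m$ odd the two Lefschetz numbers coincide and the degree hypothesis $(3)$ is invoked to rule out their common vanishing. Finally, the alternative $\tilde f^*(u)\in H^{>0}_{\mathbb{C}G}$ occurs only for $m$ even, and then hypothesis $(2)$ forces $\tilde f^*(H^*_{\mathbb{C}G})\subseteq H^*_{\mathbb{C}G}$, hence all $P_i=0$; one checks that $\tilde f_*$ then also maps $\vartheta\otimes H^{\mathbb{C}G}_{2i}$ (including $\vartheta\otimes 1$) into $H^{\mathbb{C}G}_{2i+m}$, so once again only $v_0$ contributes and $L(\tilde f,\tilde g)=\mu_1\lambda_1^d\neq 0$.

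The step I expect to be the main obstacle is precisely this bookkeeping in the case $\tilde f^*(c_1)=au$: one must track, summand by summand in the K\"unneth decomposition, the image of each basis vector under $\tilde f_*$ and then under $D\tilde g^*D^{-1}$, keep careful account of the signs $(-1)^i$ in the Lefschetz sum and of the parity of $m$ entering $\theta^*$, and make sure the three cases for $\tilde f^*(u)$ — namely $\mu u$ with $\mu\neq 0$, $\mu u$ with $\mu=0$, and a nonzero square-zero class in $H^{>0}_{\mathbb{C}G}$ — are each handled, the latter two via hypothesis $(2)$ and the normalization $\tilde f_*|_{H_0}=\mathrm{id}$. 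With this in place the argument concludes exactly as in \thmref{coincidence thm}.
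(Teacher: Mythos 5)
Your proposal follows the paper's proof essentially step for step: the same reduction of the case $\tilde f^*(c_1)\neq au$ to \thmref{coincidence thm}, the same use of \propref{main thm 2} and \lemref{image of homf under hom} when $\tilde f^*(c_1)=au$, the same observation that $\tilde f_*$ interchanges the two K\"unneth summands in all positive degrees while $D\tilde g^*D^{-1}$ preserves them (so that only $v_0$ and $\vartheta\otimes 1$ contribute to the traces), and the same final trichotomy on $\tilde f^*(u)$, with hypothesis \textit{(2)} used exactly as in the paper to force $P_i=0$ when $\tilde f^*(u)\in H^{>0}_{\mathbb CG}$.

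The one place you must look again is the $m$ odd case. You state that both Lefschetz numbers equal $(\mu_1-\mu)\lambda_1^d$ and that hypothesis \textit{(3)} rules out their common vanishing; but \textit{(3)} says $\deg(p\circ g\circ s)\neq-\deg(p\circ f\circ s)$, i.e.\ $\mu_1\neq-\mu$, which does not exclude $\mu_1=\mu$. The paper's computation gives $(\mu_1+\mu)d_0\lambda_1^d$ for both Lefschetz numbers when $m$ is odd — it does not carry the factor $(-1)^{2i+m}$ from the Lefschetz sum onto the $\vartheta\otimes H_{2i}^{\mathbb CG}$ summand — and that value is what hypothesis \textit{(3)} controls. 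So either your extra sign $(-1)^m$ is spurious, in which case your displayed value should read $\mu_1+\mu$ and the appeal to \textit{(3)} goes through; or the sign is genuine, in which case the hypothesis actually needed is $\mu_1\neq\mu$ and \textit{(3)} is the wrong one to invoke. As written, your computation and the hypothesis you cite contradict each other, and the $m$ odd case is not closed until this is reconciled (the same sign question should then also be checked against the proof of \thmref{coincidence thm}, which your first case relies on). Everything else matches the paper's argument.
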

\begin{proof}
	If $\tilde{f}^*(c_1) \neq au, \, a\in \mathbb{Q}$ then we have the result by \thmref{coincidence thm}.\\ Let us consider the other case when $\tilde{f}^*(c_1) = au, \, a\in \mathbb{Q}$, using \propref{main thm 2} we have $\tilde{f}^*(c_i) = uP_i, \text{ for some } P_i\in H^{2i-m}_{\mathbb{C}G}.$ 
	
	If $P_i \neq 0$ for some $i$ in $I$ then $\tilde{f}^*(H^*_{\mathbb CG}) \nsubseteq H^*_{\mathbb CG}$. Since $\tilde{f}^*$ is graded and by \textit{(2)} we have $\tilde{f}^*(u) = \mu u,\, \mu \in \mathbb{Q}$. Using \lemref{image of homf under hom}, $\tilde{f}_*$ is of the following form, \begin{equation}\label{uin CG}
		\begin{split}
			\tilde f_*(x)= \vartheta\otimes y, \text{ for some }y\in H_{2i-m}^{\mathbb CG}, \, \forall x \in H_{2i}^{\mathbb{C}G}, \, i>0\\
			\tilde f_*(\vartheta\otimes x)=\mu(\vartheta\otimes x) +z, \text{ for some }z\in H_{2i+m}^{\mathbb CG},\, \forall x \in H_{2i}^{\mathbb{C}G}
		\end{split}
	\end{equation}
	where $\mu =0$ if $i>0$. By \corref{automor}, we have $\tilde g^*$ is an automorphism on $H^*_{\times}$ given by
	$\tilde g^*(c_i) = \lambda_1^i c_i$, and $\tilde g^*(u) = \mu_1 u$ for some $\lambda_1, \mu_1 \in \mathbb{Q}\backslash \{0\}$. Using \thmref{LCT} and the similar calculations as done in the proof of \thmref{coincidence thm}, we get $$L(\tilde f,\tilde g) =(\mu_1 + \mu) d_0 \lambda_1^d, \quad L(\theta \circ\tilde f,\tilde g) =\begin{cases}
		(\mu_1 - \mu)d_{0} \lambda_1^{d},  \text{ if } m \text{ is even,}\\
		(\mu_1 +\mu)d_{0} \lambda_1^{d},  \text{ if } m \text{ is odd.}
	\end{cases}$$ Using $\lambda_1 \neq 0$ and $\mu_1 \neq 0$, either $L(\tilde f,\tilde g) $ or $ L(\theta \circ\tilde f,\tilde g)$ is non zero if $m$ is even. Using $\deg(p\circ g \circ s)\neq -\deg (p\circ f\circ s)$ i.e. $\mu_1 \neq -\mu$ we have $L(\tilde f, \tilde g) = L(\theta \circ \tilde f, \tilde g) \neq 0$.  Hence, we get the result.
	
	Let us consider the case when $P_i =0,\, \forall i \in I$, if $\tilde{f}^*(u) = \mu u, \mu \in \mathbb{Q}$ then the proof remains exactly the same as given above. We need to focus on the case when $\tilde{f}^*(u)\in H^*_{\mathbb{C}G}$. Using \lemref{image of homf under hom} and \eqref{computation}, we have $$\tilde{f}_*(x) = \vartheta \otimes y, \text{ for some }y\in H_{2i-m}^{\mathbb CG}, \, \forall x \in H_{2i}^{\mathbb{C}G}, \, i>0, \quad \tilde{f}_*(\vartheta \otimes x) \in H_*^{\mathbb{C}G}, \forall x\in  H_*^{\mathbb{C}G}.$$ This is exactly the same if we take $\mu =0$ in \eqref{uin CG}. The rest of the calculations also remains the same and we get the result.
\end{proof}

\begin{remark}
	There are many situations when the map $f$ satisfies the required hypothesis \textit{(2)} considered in \thmref{coincidence thm} or \thmref{coincidence thm under hom}. Some of them are as follows: 
	\begin{enumerate}
		\item The lift $\tilde f$ stabilizes a copy of Grassmannian, i.e., $\tilde f(\{x_0\}\times\mathbb CG_{n,k})\subseteq \{x_0\}\times\mathbb CG_{n,k}$ for some $x_0\in \mathbb S^m$. 
		\item  The map $p_1\circ \tilde f^*\circ i_1: H^*_{\mathbb CG}\to H^*_{\mathbb C G}$ is an automorphism, equivalently, $f^*(c_1^2)=\lambda^2c_1^2,\, \lambda\in \mathbb Q\backslash \{0\},$ where $p_1$ and $i_1$ are defined in \eqref{comm diagram}.
		\item The map $p_2\circ \tilde f\circ i_1:\mathbb S^m\to \mathbb CG_{n,k}$ is rationally null homotopic, where $p_2$ is the projection onto the second summand and $i_1$ is the inclusion into the first summand. 
	\end{enumerate} 
\end{remark}

Under the assumption $m>2k$, any continuous map $f$ on the generalized Dold space $P(m,n,k)$, the lift $\tilde f$ (from Remark~\ref{lift}) satisfies $\tilde f^{*}(c_i)=\lambda^ic_i$ for all $i\in I$. Hence condition~\textit{(2)} of Theorem~\ref{coincidence thm under hom} may be omitted, and one obtains the following consequence.
\begin{corollary}
	Let $P(m,n,k)$ be a generalized Dold space with $m$ and $k(n-k)$ both even. Assume $m>2k$, and the hypothesis \eqref{Homer} is satisfied.
	Then, for any continuous function $g$ on $P(m,n,k)$ that induces an automorphism on $H^*(P(m,n,k);\mathbb{Q})$, the pair $(P(m,n,k),g)$ has the coincidence property. \\ In particular, for $g=\mathrm{id}$, the space $P(m,n,k)$ has the fixed-point property.
\end{corollary}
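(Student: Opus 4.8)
The plan is to read this off from \thmref{coincidence thm under hom}: under the extra hypothesis $m>2k$ (with $m$ even), its conditions \textit{(2)} and \textit{(3)} on $f$ become vacuous, leaving only condition \textit{(1)} on $g$. The starting point is the degree count behind the remark preceding the corollary. For any continuous map $f$ on $P(m,n,k)$ with lift $\tilde f$ as in \remref{lift}, the induced graded endomorphism $\tilde f^*$ of $H^*_\times$ sends each $c_i$ into $H^{2i}_\times$, and by \eqref{kunneth} we have $H^{2i}_\times\cong H^{2i}_{\mathbb CG}\oplus uH^{2i-m}_{\mathbb CG}$. Since $i\in I$ forces $2i\le 2k<m$, the mixed summand $uH^{2i-m}_{\mathbb CG}$ vanishes, so $\tilde f^*(c_i)\in H^{2i}_{\mathbb CG}$ for all $i\in I$. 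Hence $\tilde f^*$ restricts to a graded endomorphism $\phi_1=p_1\circ\tilde f^*\circ i_1$ of the subalgebra $H^*_{\mathbb CG}$ (see \eqref{comm diagram}), and \thmref{hom and hof}\,(i), applicable since \eqref{Homer} is assumed, gives a rational $\lambda$ with $\tilde f^*(c_i)=\lambda^i c_i$ for every $i\in I$; in particular $\tilde f^*(H^*_{\mathbb CG})\subseteq H^*_{\mathbb CG}$.

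With this in hand I would then check the hypotheses of \thmref{coincidence thm under hom} for the given $g$ and an arbitrary continuous $f$ on $P(m,n,k)$. Condition \textit{(1)} is exactly the standing assumption that $g^*$ is an automorphism of $H^*(P(m,n,k);\mathbb Q)$. Condition \textit{(3)} is void because $m$ is even. For condition \textit{(2)}, the paragraph above shows $\tilde f^*(H^*_{\mathbb CG})\subseteq H^*_{\mathbb CG}$, so its antecedent ``$\tilde f^*(H^*_{\mathbb CG})\nsubseteq H^*_{\mathbb CG}$'' never holds and the condition is satisfied automatically. Thus \thmref{coincidence thm under hom} yields, for every such $f$, a point $x\in P(m,n,k)$ with $f(x)=g(x)$; that is, $(P(m,n,k),g)$ has the coincidence property. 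Taking $g=\mathrm{id}_{P(m,n,k)}$, whose induced map is the identity automorphism of $H^*(P(m,n,k);\mathbb Q)$, then gives the fixed-point property of $P(m,n,k)$.

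The one step that needs genuine care is the first: verifying that $m>2k$ indeed kills every mixed component $uH^{2i-m}_{\mathbb CG}$ in the degrees $2i$ with $i\in I$, so that $\tilde f^*$ truly preserves $H^*_{\mathbb CG}$ and \thmref{hom and hof}\,(i) can be applied; after that, everything reduces to matching the hypotheses of \thmref{coincidence thm under hom}. It is also worth noting that the Lefschetz-coincidence machinery inside that theorem operates on the oriented double cover $\mathbb S^m\times\mathbb CG_{n,k}$, not on $P(m,n,k)$ itself, so no orientability assumption on $P(m,n,k)$ needs to be added.
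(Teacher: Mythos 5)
Your proposal is correct and follows essentially the same route as the paper: the paper's justification is precisely the remark preceding the corollary, namely that $m>2k$ forces $uH^{2i-m}_{\mathbb CG}=0$ for all $i\in I$, so $\tilde f^{*}(c_i)=\lambda^i c_i$ by \thmref{hom and hof}(i) under hypothesis \eqref{Homer}, making condition \textit{(2)} of \thmref{coincidence thm under hom} vacuous (and \textit{(3)} vacuous since $m$ is even). Your spelled-out degree count and the observation that the Lefschetz machinery runs on the oriented double cover are accurate elaborations of that same argument.
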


In \thmref{coincidence thm under hom}, the first assumption that $g^*$ is an automorphism of $H^*(P(m,n,k); \mathbb{Q})$ can be relaxed by assuming $\mu$ is nonzero, which leads to the following proposition. 
\begin{proposition}
	Let $P(m,n,k)$ be a generalized Dold manifold with $k(n-k)$ even and assume that the hypothesis \eqref{Homer} is satisfied. Let $g$ and $f$ are two continuous maps on $P(m,n,k)$ and $\tilde g, \tilde f$ be their lifts as defined in  Remark \ref{lift} such that
	\begin{enumerate}
		\item $\tilde g^*(H^*_{\mathbb CG})= H^*_{\mathbb CG}$.
		\item $\tilde f^*(u)=\mu u,\, \mu\in \mathbb Q\backslash \{0\}$
	\end{enumerate}
	Then, there is a point of coincidence of $f$ and  $g$.
\end{proposition}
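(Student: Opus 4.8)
The plan is to follow the strategy of the proofs of \thmref{coincidence thm} and \thmref{coincidence thm under hom}: lift $f,g$ to $\mathbb S^m\times\mathbb CG_{n,k}$ and compute the Lefschetz coincidence numbers $L(\tilde f,\tilde g)$ and $L(\theta\circ\tilde f,\tilde g)$, where $\theta=\alpha\times\sigma$ and $d=k(n-k)$. Since $\pi\circ\theta=\pi$, both $\tilde f$ and $\theta\circ\tilde f$ are lifts of $f$, so if either number is nonzero then \thmref{LCT} (applied on the closed oriented manifold $\mathbb S^m\times\mathbb CG_{n,k}$) yields a coincidence point of $\tilde f$ or $\theta\circ\tilde f$ with $\tilde g$, and applying $\pi$ produces a coincidence point of $f$ and $g$. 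The task is therefore to show, from hypotheses \textit{(1)} and \textit{(2)}, that at least one of the two numbers does not vanish.

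First I would analyze $\tilde g^*$. The restriction $\tilde g^*|_{H^*_{\mathbb{C}G}}$ is a surjective graded endomorphism of the finite-dimensional graded $\mathbb Q$-algebra $H^*_{\mathbb{C}G}$, hence an automorphism; in particular $\tilde g^*(c_1)=\lambda_1 c_1$ with $\lambda_1\neq 0$ (as $H^2_{\mathbb{C}G}=\mathbb Q c_1$), so $\tilde g^*(c_1)\neq \mu u$ for any $\mu$, and \thmref{main thm} applies to $\tilde g^*$. Since \eqref{Homer} forces $k<n-k$, this gives $\tilde g^*(c_i)=\lambda_1^i c_i$ for all $i\in I$ and either $\tilde g^*(u)=\mu_1 u$ for some $\mu_1\in\mathbb Q$ (possibly $\mu_1=0$) or $\tilde g^*(u)\in H^*_{\mathbb{C}G}$ with $\tilde g^*(u)^2=0$. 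Next I would analyze $\tilde f_*$ using hypothesis \textit{(2)}: if $\tilde f^*(c_1)\neq au$ then \thmref{main thm} applies to $\tilde f^*$, and if $\tilde f^*(c_1)=au$ then \propref{main thm 2} applies (here \eqref{Homer} is used); in either case, since $\tilde f^*(u)=\mu u$, \lemref{image of homf} (resp.\ \lemref{image of homf under hom}) describes $\tilde f_*$ on $H_*^{\times}=H_*^{\mathbb{C}G}\oplus(\vartheta\otimes H_*^{\mathbb{C}G})$: on $H_{2i}^{\mathbb{C}G}$ it is an Adams-type scaling by $\lambda^i$ (or it maps into $\vartheta\otimes H_*^{\mathbb{C}G}$), and on $\vartheta\otimes H_{2i}^{\mathbb{C}G}$ it is multiplication by $\mu\lambda^i$, up to the degree-$0$ correction from the term $y$.

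With these forms in hand I would carry out the Poincar\'e-duality trace computation of $D\circ\tilde g^*\circ D^{-1}\circ\tilde f_*$, and of its analogue for $\theta\circ\tilde f$, in the basis $\{v_q\}\cup\{\vartheta\otimes v_q\}$, exactly as in the proof of \thmref{coincidence thm}. When $\tilde g^*(u)\in H^*_{\mathbb{C}G}$, the subspace $H_*^{\mathbb{C}G}$ contributes $0$ to every trace (since $D\tilde g^*D^{-1}$ carries it into $\vartheta\otimes H_*^{\mathbb{C}G}$), while $\vartheta\otimes H_*^{\mathbb{C}G}$ contributes a nonzero scalar multiple of $\mu\sum_{i=0}^{d}d_{2i}\lambda^i\lambda_1^{d-i}$, or simply $\mu\,d_0\lambda_1^{d}$ when $\tilde f^*(c_1)=au$; this is nonzero because $\mu\neq 0$, $\lambda_1\neq 0$, and $k(n-k)$ is even, by \lemref{sum neq 0}, so $L(\tilde f,\tilde g)\neq 0$ already. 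When $\tilde g^*(u)=\mu_1 u$, the same computation gives $L(\tilde f,\tilde g)$ and $L(\theta\circ\tilde f,\tilde g)$ as scalar multiples of $\sum_{i=0}^{d}d_{2i}\lambda^i\lambda_1^{d-i}$ and $\sum_{i=0}^{d}d_{2i}(-\lambda)^i\lambda_1^{d-i}$ carrying the factors $\mu_1+\mu$ and $\mu_1-\mu$ (up to a sign governed by the parity of $m$, i.e.\ by the degree of $\theta$ on the spherical class $\vartheta$), and since $\mu\neq 0$ these two factors cannot both vanish; combined with \lemref{sum neq 0} this forces one of the two Lefschetz numbers to be nonzero. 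The main obstacle I anticipate is the bookkeeping of this case split --- the two possibilities for $\tilde g^*(u)$ crossed with the two possibilities for $\tilde f^*(c_1)$, together with the sign subtleties coming from the parity of $m$ --- and checking uniformly that the single hypothesis $\mu\neq 0$ is enough to keep the relevant Lefschetz number off zero in each branch.
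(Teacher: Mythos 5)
Your overall strategy --- lift $f,g$, compute both $L(\tilde f,\tilde g)$ and $L(\theta\circ\tilde f,\tilde g)$ using the splitting $H_*^{\times}=H_*^{\mathbb{C}G}\oplus(\vartheta\otimes H_*^{\mathbb{C}G})$, and invoke \lemref{sum neq 0} --- is the one the paper intends (it only says the proof is ``similar to'' \thmref{coincidence thm under hom}), and your reduction of hypothesis \textit{(1)} to ``$\tilde g^*$ restricts to an automorphism of $H^*_{\mathbb{C}G}$, hence $\tilde g^*(c_i)=\lambda_1^i c_i$ with $\lambda_1\neq 0$'' is correct. But two of your branch computations have real holes. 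In the branch $\tilde g^*(u)\in H^*_{\mathbb{C}G}$ combined with $\tilde f^*(c_1)=au$, the operator $D\tilde g^*D^{-1}$ sends \emph{both} K\"unneth summands into $\vartheta\otimes H_*^{\mathbb{C}G}$ (because $\tilde g^*(uH^*_{\mathbb{C}G})\subseteq H^*_{\mathbb{C}G}$ as well as $\tilde g^*(H^*_{\mathbb{C}G})\subseteq H^*_{\mathbb{C}G}$). Hence the components of $\tilde f_*$ landing in $H_*^{\mathbb{C}G}$ --- the class $y\in H_m^{\mathbb{C}G}$ in $\tilde f_*(\vartheta\otimes 1)=\mu(\vartheta\otimes 1)+y$ and the classes $\tilde f_*(\vartheta\otimes x)\in H_{2i+m}^{\mathbb{C}G}$ for $i>0$ from \lemref{image of homf under hom} --- are carried back into $\vartheta\otimes H_*^{\mathbb{C}G}$ and contribute to the diagonal of $D\tilde g^*D^{-1}\tilde f_*$ there. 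Your claimed value $\mu\,d_0\lambda_1^{d}$ silently discards these terms, which are not controlled by the hypotheses; the disappearance of the cross terms that you are implicitly using only holds when $\tilde g^*(u)=\mu_1 u$, i.e.\ when $D\tilde g^*D^{-1}$ preserves the splitting, which is the situation of \thmref{coincidence thm under hom} but not of this branch.

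The second hole is your assertion that the factors $\mu_1+\mu$ and $\mu_1-\mu$ ``cannot both vanish.'' For $m$ odd one has $\theta^*(u)=u$, so $(\theta\circ\tilde f)^*(u)=\tilde f^*(u)$ and the two Lefschetz numbers carry the \emph{same} factor $\mu_1\pm\mu$; both vanish when $\mu_1=\mp\mu$, and $\mu\neq 0$ does not prevent this. This is exactly why \thmref{coincidence thm} and \thmref{coincidence thm under hom} impose the extra condition $\deg(p\circ g\circ s)\neq-\deg(p\circ f\circ s)$ when $m$ is odd; the present proposition omits it, and your argument does not close that case. To repair the proof you would need either to restrict to $m$ even (or reinstate the degree condition for $m$ odd), and to handle, or rule out, the cross-term contributions in the $\tilde g^*(u)\in H^*_{\mathbb{C}G}$, $\tilde f^*(c_1)=au$ branch rather than asserting the trace reduces to the single term $\mu\,d_0\lambda_1^{d}$.
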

The proof of the above proposition is similar to the proof of \thmref{coincidence thm under hom}. Therefore, we omit the details.

\section*{Acknowledgements}
Part of this work was carried out while the first author was a postdoctoral fellow at IISER Berhampur, which the author gratefully acknowledges.


\end{document}